\newcommand{\leqnos}{\tagsleft@true\let\veqno\@@leqno}
\newcommand{\reqnos}{\tagsleft@false\let\veqno\@@eqno}
\newcommand{\ind}{{\sf 1}}
\newcommand{\Pb}{\mathbf{P}}
\newcommand{\bP}{\mathbf{P}}
\newcommand{\bE}{\mathbf{E}}
\newcommand{\Pbb}{\Pb_{\gb}}			
\newcommand{\Ebb}{\bE_{\gb}}			
\newcommand{\PbLb}{\Pb_{L,\gb}}			
\newcommand{\R}{\mathbb{R}}
\newcommand{\N}{\mathbb{N}}
\newcommand{\bbZ}{\mathbb{Z}}
\newcommand{\be}{\begin{equation}}
\newcommand{\ee}{\end{equation}}
\newcommand{\Pbbzero}{\Pb_{\gb,0}}			
\newcommand{\Ebbzero}{\bE_{\gb,0}}
\newcommand{\cH}{{\ensuremath{\mathcal H}} }
\newcommand{\cC}{{\ensuremath{\mathcal C}} }
\newcommand{\cL}{{\ensuremath{\mathcal L}} }
\newcommand{\cJ}{{\ensuremath{\mathcal J}} }
\newcommand{\cT}{{\ensuremath{\mathcal T}} }
\newcommand{\cD}{{\ensuremath{\mathcal D}} }
\newcommand{\cV}{{\ensuremath{\mathcal V}} }
\newcommand{\cB}{{\ensuremath{\mathcal B}} }
\newcommand{\cG}{{\ensuremath{\mathcal G}} }
\newcommand{\cK}{{\ensuremath{\mathcal K}} }
\renewcommand{\epsilon}{\varepsilon}
\renewcommand{\phi}{\varphi}
\newcommand{\gb}{\beta}
\newcommand{\gep}{\varepsilon}  
\newcommand{\gO}{\Omega}
\newcommand{\gl}{\lambda}
\newcommand{\gL}{\Lambda}
\newcommand{\bh}{\textbf{\textit{h}}}
\newcommand{\cst}{(\text{const.})}
\newcounter{cst}[section]		
\newcounter{svf}[section]		
\newtheorem{theorem}{Theorem}[section]
\newtheorem{proposition}[theorem]{Proposition}
\newtheorem{corollary}[theorem]{Corollary}
\newtheorem{lemma}[theorem]{Lemma}
\newtheorem{claim}[theorem]{Claim}
\theoremstyle{definition}
\newtheorem{remark}[theorem]{Remark}
\numberwithin{equation}{section}			
\newcommand{\dd}{\mathrm{d}}		
\renewcommand{\hat}{\widehat}
\renewcommand{\tilde}{\widetilde}
\DeclareMathOperator*{\argmax}{arg\,max}
\newcommand{\Imax}{I_{\mathrm{max}}}
\newcommand{\tpsi}{\tilde \psi}
\title[Asymptotics of the partition function of the collapsed IPDSAW]{A sharp asymptotics of the partition function for the collapsed  interacting partially directed self-avoiding walk }
\author{Alexandre Legrand}
\address{Universit\'e de Nantes, Laboratoire Jean Leray, 2, rue de la Houssini\`ere,44322 Nantes cedex 3, France}
\email{alexandre.legrand@univ-nantes.fr}
\author{Nicolas P\'etr\'elis}
\address{Universit\'e de Nantes, Laboratoire Jean Leray, 2, rue de la Houssini\`ere,44322 Nantes cedex 3, France}
\email{nicolas.petrelis@univ-nantes.fr}
\subjclass[]{Primary 60K35; Secondary 82B41}
\keywords{Polymer collapse, large deviations, random walk representation, local limit theorem}
\begin{document}

\begin{abstract}
In the present paper, we investigate the collapsed phase of the interacting partially-directed self-avoiding walk (IPDSAW) that was introduced in Zwanzig and Lauritzen (1968).
We provide sharp asymptotics of the partition function inside the collapsed phase, proving rigorously a conjecture formulated in Guttmann (2015) and Owczarek et al. (1993).
As a by-product of our result, we obtain that, inside the collapsed phase, a typical IPDSAW trajectory is made of a unique macroscopic bead, consisting of a concatenation of long vertical stretches of alternating signs, outside which 
only finitely many monomers are lying. 
    
\end{abstract}

\maketitle

\let\thefootnote\relax\footnote{{\it Acknowledgements.}  The authors thanks the Centre Henri Lebesgue ANR-11-LABX-0020-01 for creating an attractive mathematical environment.}


\section*{Notations}
Let $(a_L)_{L\leq 1}$ and $(b_L)_{L\leq 1}$ be two sequences of positive numbers. We will write 
$$a_L\underset{L\to \infty}{\sim}b_L \quad \text{if} \quad \lim_{L\to \infty} a_L/b_L=1.$$

We will also write $\cst$  to denote generic positive constant whose value may change from line to line.

\section{Introduction}

Identifying the behavior of the partition function of a lattice polymer model is in general a challenging question that 
sparked interest in both the physical and mathematical literature. In a recent survey \cite{Gut15}, some polymer models are reviewed 
that have in common that their partition functions is of the form 
\begin{equation}\label{eq:fgene}
Q_L\underset{L\to \infty}{\sim}B \mu^L \mu_1^{L^\sigma} L^g
\end{equation}
 where $B, \mu, \mu_1, \sigma, g$ are real constants depending on the coupling parameters of the model.
Among these model, the Interacting Partially Directed Self-Avoiding Walk (referred to under the acronym IPDSAW) which accounts for 
an homopolymer dipped in a poor (i.e., repulsive) solvent is conjectured to satisfy \eqref{eq:fgene} inside its collapsed phase. 
To be more specific,  based on numerics displayed in \cite{Gut15} and in \cite{OPB93} (with simulations
up to size $L=6000$), the values of $\sigma$ and $g$ are conjectured to be $1/2$ and $-3/4$. In an earlier paper \cite[Theorem 2.1]{CarPet16B}, analytic expressions where displayed for $\mu$ and $\mu_1$
while $\sigma$ was proven to be $1/2$. In the present paper, we give a full proof of \eqref{eq:fgene} for the IPDSAW in its collapsed phase, in particular we prove that $g=-3/4$ and give an analytic expression
of $B$.

\subsection{Model}
The IPDSAW was initially introduced in \cite{ZL68}. The spatial configurations of the polymer   are modeled by the trajectories of a \emph{self-avoiding} random walk on $\mathbb{Z}^2$ that only takes unitary steps \emph{upwards, downwards and to the right} (see Fig.~\ref{fig:ipdsaw}). 
To take into account the monomer-solvent interactions, one considers that, when dipped in a poor solvent, the monomers try to exclude the solvent and therefore attract one another. For this reason, any non-consecutive vertices of the walk though adjacent on the lattice are called \textit{self-touchings} (see Fig.~\ref{fig:ipdsaw}) and the interactions between monomers are taken into account by assigning an energetic reward $\beta\geq0$ to the polymer for each self-touching.

\begin{figure}
\includegraphics[scale=1.3]{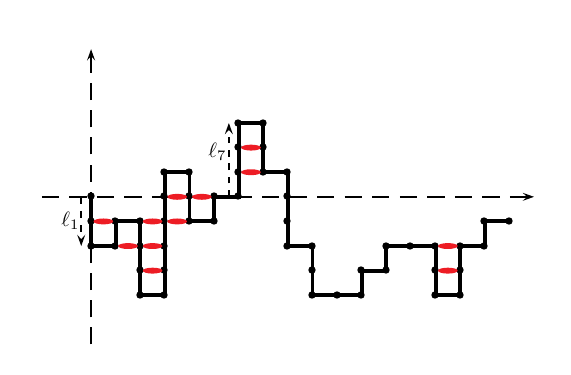}
\caption{\small Representation of an IPDSAW trajectory with length $L=48$ and horizontal extension $N=17$. Its self-touchings are shown in red.}
\label{fig:ipdsaw}
\end{figure}

It is convenient to represent the configurations of the model as families of oriented vertical stretches separated by horizontal steps. To be more specific,  for a polymer made of $L\in \N$ monomers, the set of allowed 
path is $\Omega_L:=\bigcup_{N=1}^L\mathcal{L}_{N,L}$, where $\mathcal{L}_{N,L}$ 
consists of all families made of  $N$ vertical stretches that have a total length $L-N$, that is
\begin{equation}\label{defLL}
\textstyle\mathcal{L}_{N,L}=\Bigl\{\ell:=(\ell_i)_{i=1}^N \in\mathbb{Z}^N:\sum_{n=1}^N|\ell_n|+N=L\Bigr\}.
\end{equation}
With this representation,  the modulus of a given stretch
corresponds to the number of monomers constituting this stretch  (and
the sign gives the direction upwards or downwards). For convenience, we require every configuration to end with an horizontal step, and we note that any two consecutive vertical
stretches are separated by a step placed horizontally. The latter explains why $\sum_{n=1}^N |\ell_n|$ must equal $L-N$ in order for $\ell=(\ell_i)_{i=1}^N$ to be associated with a polymer made of $L$ monomers (see Fig.~\ref{fig:ipdsaw}).

As mentioned above, the attraction between monomers 
is taken into account in the Hamiltonian associated with each path $\ell\in \Omega_L$,  by rewarding 
energetically those pairs of consecutive stretches with opposite directions, i.e.,
\begin{equation}\label{defH}
\textstyle H_{L,\beta}(\ell_1,\ldots,\ell_N)=\beta\sum_{n=1}^{N-1}(\ell_n\;\tilde{\wedge}\;\ell_{n+1}),
\end{equation}
where
\begin{equation}
x\, \tilde\wedge\, y=\begin{dcases*}
	|x|\wedge|y| & if $xy<0$,\\
  0 & otherwise.
  \end{dcases*}
\end{equation}

One can already note that large Hamiltonians will be assigned to trajectories made of few 
but long vertical stretches with alternating signs. Such paths will be referred to as collapsed configurations.
With the Hamiltonian  in hand we can define the polymer measure as
\begin{equation}\label{defpolme}
P_{L,\beta}(\ell)=\frac{e^{H_{L,\beta}(\ell)}}{Z_{L,\beta}},\quad \ell \in \Omega_L,
\end{equation}
where $Z_{L,\beta}$ is the partition function of the model, i.e.,
\begin{equation}\label{pff}
Z_{L,\beta}=\sum_{N=1}^{L}\sum_{\ell\in\mathcal{L}_{N,L}} \,  e^{H_{L,\beta}(\ell)}.
\end{equation}

\section{Main results}
The IPDSAW undergoes a collapse transition at some $\beta_c>0$ (see \cite{OPB93} or \cite[Theorem 1.3]{NGP13})
dividing $[0,\infty)$ into an extended phase $\mathcal E:=[0,\beta_c)$ and a collapsed phase $\mathcal C:=[\beta_c, \infty)$. 
Note that, when the monomer-monomer attraction is switched off ($\beta=0$), a typical configuration (sampled from $P_{L,\beta}$) has an horizontal extension $O(L)$ since, roughly speaking, every step has a positive probability (bounded from below) to be horizontal. In the extended phase, the interaction intensity $\beta$ is not yet strong enough to bring this typical horizontal extension from $O(L)$ to $o(L)$. Inside the collapsed phase, in turn, the interaction intensity is large enough to change dramatically the geometric features of a typical trajectory, which roughly looks like a compact ball with an horizontal extension $o(L)$. The asymptotics of $(Z_{L,\beta})_{L\geq 1}$ are  displayed and proven in \cite[Theorem 2.1 (1) and (2)]{CarPet16B} for the extended phase and at criticality ($\beta=\beta_c$). Inside the collapsed phase, although the exponential terms of the partition function growth rate were identified via upper and lower bounds (see \cite[Theorem~2.1 (iii)]{CarPet16B}), a full proof of such asymptotics was missing. We close this gap with Theorem~\ref{Th:4.1} below, by identifying the polynomial prefactor.  This improvement relies on a sharp local limit theorem displayed in Proposition~\ref{lemunif} for some random walk in a large deviation regime (more precisely it is constrained to be positive, cover a large area and end in~0).

Let us settle some notations that are required to state Theorem \ref{Th:4.1}.
For $\beta>0$ we let $\Pb_{\beta}$ be the following discrete Laplace probability law on $\bbZ$:
\begin{equation}\label{def:Pgb}
\Pb_\gb(\,\cdot=k)=\frac{e^{-\frac{\gb}{2}|k|}}{c_\gb}\quad,\qquad c_\gb:=\sum_{k\in\bbZ} e^{-\frac{\gb}{2}|k|} = \frac{1+e^{-\gb/2}}{1-e^{-\gb/2}},
\end{equation}
and we denote by $\cL(h)$ the logarithmic moment generating function of  $Z$ a random variable of law $\Pb_\gb$, i.e.,  
\begin{equation}\label{def:cL}
\cL(h) := \log \Ebb [e^{hZ}], \quad \quad  h\in \big(-\tfrac{\beta}{2}, \tfrac{\beta}{2}\big).
\end{equation}
Moreover, $\gb_c$ is the unique positive solution to the equation $\frac{c_\gb}{e^\gb}=1$ (see \cite[Theorem 1.3]{NGP13}).

We  define below the escape probability of a certain class of drifted random walks and we display the exponential 
decay rate of the probability that some random walk trajectories enclose an atypically large area.  
\subsubsection{Tilting and excape probability.}
 For $|h|<\beta/2$ we let  $\tilde\Pb_h$ be the probability law on $\mathbb{Z}$ defined by perturbing $\bP_\beta$ as
\begin{equation}\label{def:Pgd}
\frac{\dd \tilde\Pb_h}{\dd \Pbb} (k) = e^{h k - \cL(h)} \quad k\in \mathbb{Z}.
\end{equation}
For $h\in (0,\beta/2)$, we consider a random walk $X:=(X_i)_{i\in \N}$ such that $X_0=0$ and whose increments
$(X_{i}-X_{i-1})_{i \in \N}$ are i.i.d. with law $\tilde\Pb_h$  (i.e., with a positive drift). We denote by $\kappa(h)$  
the probability that $X$ never returns to the lower half plan, that is 
\begin{equation}\label{defKB}
\kappa(h)\,:=\,\tilde\Pb_h\big( X_i>0, \ \  \forall i\in \N) = \frac{e^{2h}-1}{e^{h+\gb/2}-1} \,,
\end{equation}
where the second identity will be proven in Lemma~\ref{convkappa}.

\subsubsection{Rate function for the area of a positive excursion.} 

We let $\cG:(-\beta,\beta)\to \R$ be defined as 
\begin{align}\label{deftiG:1}
\cG(h)&:=  \int_0^1 \cL(h(\tfrac12-x))\dd x, \quad \text{for}\quad h\in (-\beta,\beta).
\end{align}
We will prove in Lemma \ref{diffeotildeG}  that $\cG'$ is a $\cC^1$ diffeomorphism from $ (-\beta,\beta)$ to $\R$  and we let 
$q\in \R\mapsto \tilde h^q$ be its inverse function.
Considering $X:=(X_i)_{i\in \N}$ a random walk starting from the origin, whose increments
$(X_{i}-X_{i-1})_{i \in \N}$ are i.i.d. with law $\bP_{\beta}$,  we denote by $A_N(X)$ (or $A_N$ when there is no risk of confusion) the algebraic area enclosed by $X$ up to time $N$, i.e.,
\begin{equation}\label{airealg}
A_N(X):=X_1+\dots+X_{N}.
\end{equation}
\smallskip

We will prove in Proposition~\ref{lemunif} below that, for $q\in [0,\infty)\cap \frac{\N}{N^2}$, the exponential decay rate of the event 
$\{A_N=qN^2, X_N=0, X_i>0, 1\leq i\leq N-1\}$  is given by  $\tpsi:(0,\infty)\mapsto \R$ taking value
\begin{align}\label{defF}
\tpsi(q)&:= q \, \tilde h^q - \cG(\tilde h^q), \quad \text{for}\quad q\in (0,\infty).
\end{align}
At this stage, we define a positive constant that depends on $\beta$ and $q>0$ as
\begin{equation}\label{defcbq}
C_{\beta,q}:=\frac{1}{2\, \pi \  \vartheta(\tilde h^q)^{\frac12}}\   \kappa\big(\tfrac{\tilde h^q}{2}\big)^2,
\end{equation}
where $\vartheta: h\in (-\beta/2,\beta/2)\mapsto \R$ takes value 
\begin{align}\label{defvartheta}
\vartheta(h)=\int_0^1x^2 \cL''[h(x-\tfrac{1}{2})]\, dx \int_0^1 \cL''[h(x-\tfrac{1}{2})]dx-\bigg[\int_0^1x\,  \cL''[h(x-\tfrac{1}{2})]dx\bigg]^2.
\end{align}
\bigskip

%
%

\begin{theorem}\label{Th:4.1}
For any $\gb>\gb_c$, one has
\begin{equation}\label{eq:4.1}
 Z_{L,\gb}\underset{L\to \infty}{\sim}\, 
\frac{K_\beta }{L^{3/4}}\, e^{\gb L + \tilde \cG(a_\gb) \sqrt{L}},
\end{equation}
with 
\begin{align}\label{defGF}
\tilde \cG(x)&:= x \log \frac{c_\beta}{e^\beta}-x\, \tpsi(x^{-2}) , \quad x>0,
\end{align}
and  
\begin{equation}\label{defabeta}
a_\beta:=\argmax\{\tilde \cG(x)\colon x\in ]0,\infty[\},
\end{equation}
and 
\begin{equation}\label{defKdeta}
K_\beta:=\frac{2 \sqrt{2\pi} \, C_{\beta,a_\beta^{-2}}\, e^{\tpsi\,'(a_\beta^{-2})}}{\big[(1+e^{-\gb}) e^{\mathrm{arccosh}(e^{-\beta/2}\cosh(\beta))}-e^{\beta/2} (1-e^{-\beta})\big]^2\, a_\beta^2\,  \big| \tilde \cG^{''}(a_\beta)\big|^{1/2}}  .
\end{equation}


\end{theorem}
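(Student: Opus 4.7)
The strategy is to decompose each IPDSAW trajectory into \emph{beads}---maximal runs of consecutive non-vanishing vertical stretches with alternating signs---and to show that, inside the collapsed phase, $Z_{L,\gb}$ is dominated by configurations made of one macroscopic bead surrounded by only $O(1)$ additional monomers. The identity $x\twg y=\tfrac12(|x|+|y|-|x+y|)$ first yields
\begin{equation*}
H_{L,\gb}(\ell)=\gb(L-N)-\tfrac{\gb}{2}(|\ell_1|+|\ell_N|)-\tfrac{\gb}{2}\sum_{n=1}^{N-1}|\ell_n+\ell_{n+1}|.
\end{equation*}
Inside a bead the signs of $(\ell_n)_{n\leq N}$ alternate, so setting $Y_n:=(-1)^{n-1}\ell_n>0$ produces $|\ell_n+\ell_{n+1}|=|Y_{n+1}-Y_n|$ and the Boltzmann weights factorise as increments of the random walk $X$ with i.i.d.\ steps of law $\Pbb$. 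Summing over $Y_1,\dots,Y_N\geq 1$ with $Y_1+\dots+Y_N=l-N$ and accounting for the two possible global signs yields, up to explicit endpoint corrections, the identity
\begin{equation*}
Z^{(b)}_{l,N,\gb}\;\sim\; 2\,e^{\gb(l-N)}\,c_\gb^{N}\,\Pbb\bigl(A_N=l-N,\ X_i>0\ \forall\, 1\leq i\leq N-1,\ X_N=0\bigr),
\end{equation*}
exactly the event whose sharp asymptotics are provided by Proposition~\ref{lemunif}.

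A trajectory splits canonically into an alternating concatenation of beads and (possibly empty) horizontal runs, so $Z_{L,\gb}$ admits an explicit generating-function expression in the bead kernel $Z^{(b)}_{l,N,\gb}$. The bounds of \cite{CarPet16B} ensure that, for $\gb>\gb_c$, the relevant value of the bead generating function is strictly subcritical, so that the cost of having two or more beads of length $\Theta(L)$ is super-polynomially smaller than a single one. One is thus reduced to
\begin{equation*}
Z_{L,\gb}=(1+o(1))\sum_{k\geq 0}R_\gb(k)\,Z^{(mb)}_{L-k,\gb},\qquad Z^{(mb)}_{l,\gb}:=\sum_{N=1}^{l}Z^{(b)}_{l,N,\gb},
\end{equation*}
where $R_\gb(k)$ gathers the contributions of the finitely many microscopic beads and horizontal steps placed at the two endpoints. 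An explicit generating-function computation evaluates $\sum_{k}R_\gb(k)=[(1+e^{-\gb})e^{\mathrm{arccosh}(e^{-\gb/2}\cosh\gb)}-e^{\gb/2}(1-e^{-\gb})]^{-2}$, the square reflecting the two ends and the $\mathrm{arccosh}$ coming from solving the characteristic equation for the transfer operator at fugacity $e^{-\gb}$.

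Inserting Proposition~\ref{lemunif} into $Z^{(b)}_{l,N,\gb}$ and parameterising $N=x\sqrt{l}$, so that $q=(l-N)/N^{2}=1/x^{2}-1/(x\sqrt l)$, gives
\begin{equation*}
Z^{(b)}_{l,N,\gb}\;\sim\; \frac{2\,C_{\gb,1/x^{2}}\,e^{\tpsi'(1/x^{2})}}{N^{2}}\,\exp\!\bigl(\gb l+\sqrt{l}\,\tilde\cG(x)\bigr),
\end{equation*}
the exponential $e^{\tpsi'(1/x^{2})}$ coming from the Taylor expansion $N\tpsi((l-N)/N^{2})=x\sqrt{l}\,\tpsi(1/x^{2})-\tpsi'(1/x^{2})+O(1/\sqrt l)$. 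The function $\tilde\cG$ of \eqref{defGF} admits a unique interior maximiser $a_\gb$ with $\tilde\cG''(a_\gb)<0$; Gaussian fluctuations of $N$ around $a_\gb\sqrt{l}$ are of size $l^{1/4}$, so summing in $N$ contributes a factor $\sqrt{2\pi}\,l^{1/4}\,|\tilde\cG''(a_\gb)|^{-1/2}$ which, combined with $1/N^{2}\sim 1/(a_\gb^{2} l)$, yields
\begin{equation*}
Z^{(mb)}_{l,\gb}\;\sim\; \frac{2\sqrt{2\pi}\,C_{\gb,a_\gb^{-2}}\,e^{\tpsi'(a_\gb^{-2})}}{a_\gb^{2}\,|\tilde\cG''(a_\gb)|^{1/2}\, l^{3/4}}\, e^{\gb l+\tilde\cG(a_\gb)\sqrt{l}}.
\end{equation*}
The convolution with $R_\gb$ only produces a vanishing correction---because $\sqrt{L-k}=\sqrt{L}-k/(2\sqrt{L})+o(1)$ uniformly and $R_\gb$ is summable---so $Z_{L,\gb}\sim Z^{(mb)}_{L,\gb}\cdot\sum_{k}R_\gb(k)$, which reproduces exactly $K_\gb$ of \eqref{defKdeta}.

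The main obstacle is Proposition~\ref{lemunif}. Its proof tilts $\Pbb$ by (a spatially varying form of) $\tilde h^q$ so that the imposed area level $qN^{2}$ becomes typical under the tilted law, applies a bivariate LLT for the pair $(X_N,A_N)$---which yields the joint density $(2\pi\vartheta(\tilde h^q)^{1/2}N^{2})^{-1}$ via the covariance matrix $\vartheta(\tilde h^q)$---and then multiplies by two Caravenna--Chaumont-type ballot probabilities $\kappa(\tilde h^q/2)^{2}$, one for each half of the bridge conditioned to stay positive (the tilt being $\pm\tilde h^q/2$ at the two endpoints because of the linearly varying weighting needed to impose the area constraint). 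The genuinely delicate point is to establish the \emph{uniformity} of this expansion in $q$ on compact subsets of $(0,\infty)$, as required for the Laplace summation above to yield sharp constants rather than merely the correct exponential rate.
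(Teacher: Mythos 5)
Your proposal follows essentially the same strategy as the paper: decompose trajectories into beads (a renewal structure), represent single-bead partition functions probabilistically so that Proposition~\ref{lemunif} gives the sharp asymptotics of the bead kernel, then perform a Laplace/saddle-point summation over the horizontal extension $N=x\sqrt{L}$ around the maximiser $a_\gb$ of $\tilde\cG$, and finally resum the renewal contributions using subcriticality of the bead mass for $\gb>\gb_c$. Two points are worth flagging. First, the reduction ``$Z_{L,\gb}=(1+o(1))\sum_{k}R_\gb(k)\,Z^{(mb)}_{L-k,\gb}$'' is not an immediate consequence of ``two macroscopic beads are super-polynomially unlikely''; the paper proves it via a genuine subexponential single-big-jump theorem (Claims~\ref{subexponen} and~\ref{boundconvol}, imported from~\cite{FKZ11}) together with dominated convergence, and one still needs the restriction of the $N$-sum to $N\in[a_1,a_2]\sqrt{L}$ (Lemma~\ref{rest-ext}) before applying Proposition~\ref{lemunif}. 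Second, the square in the bracketed prefactor of $K_\gb$ does not arise from ``the two ends''; in the actual computation it comes from the geometric-series derivative in the renewal sum $\sum_r r\,\delta_2(\gb)^r$ (equivalently from $(1-\delta_2(\gb))^{-2}$), after which the formulas for $\delta_1,\delta_2$ in Lemma~\ref{exprgene} produce the explicit $\mathrm{arccosh}$-expression. Neither of these is a fatal flaw in the overall architecture, but as stated they would not compile into a complete proof without the paper's intermediate lemmas.
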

\medskip

Theorem \ref{Th:4.1} is proven in Section \ref{th41}. The proof will require to decompose a trajectory into a succession of {\it beads} 
that are sub-trajectories made of non-zero vertical stretches of alternating signs.  Inside the collapsed phase, an issue raised by physicists was to understand whether a typical trajectory contains a unique macroscopic bead or not.
Thus, for every  $\ell \in \Omega_L$  we let $N_\ell$ be its horizontal extension (i.e., $\ell\in \cL_{N_\ell,L}$) and also $|\Imax(\ell)|$  be the length of its largest bead, i.e., 
\begin{equation}\label{deflargebead}
|\Imax(\ell)|:=\max\big\{\textstyle \sum_{i=u}^v 1+|\ell_{i}| \colon\,  1\leq u\leq v\leq N_\ell,  \  \ell_i\ell_{i+1} <0 \ \ 
\forall\, u\leq i\leq v-1  \big\}.
\end{equation}
 With \cite[Thm. C]{CNP16} it is known that a typical trajectory indeed contains a unique macroscopic bead, and that at most $(\log L)^4$ 
monomers lay outside this large bead. We improve this result with the following theorem, by showing that only finitely many monomers are to be found outside the unique macroscopic bead. 
\begin{theorem}\label{thm:C}
For any $\gb>\gb_c$,
\begin{equation}\label{eq:C}
\lim_{k\to \infty} \liminf_{L\to\infty} \PbLb\big(|\Imax(\ell)|\geq L-k) =1.
\end{equation}
\end{theorem}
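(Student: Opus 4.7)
Set $s_\ell := L - |\Imax(\ell)|$, so that the claim is equivalent to
\[
\lim_{k\to\infty}\limsup_{L\to\infty} \PbLb(s_\ell > k) = 0.
\]
The natural approach is to combine the bead decomposition of a trajectory (into maximal sub-paths of alternating non-zero stretches, separated by degenerate connectors) with the sharp asymptotic \eqref{eq:4.1}. One may moreover restrict attention to $s_\ell \leq (\log L)^4$, since \cite[Theorem~C]{CNP16}---precisely the weaker bound this theorem aims to improve---already ensures $\PbLb(s_\ell > (\log L)^4)\to 0$.

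For each $j\in\{k+1,\dots,\lfloor (\log L)^4\rfloor\}$, decompose the event $\{s_\ell=j\}$ according to the largest bead (of length $L-j$) and the \emph{exterior} (a configuration of total length $j$ whose beads all have length $\leq L-j$). This yields an upper bound of the form
\[
Z_{L,\gb}(s_\ell = j) \leq (j+1)\, Z^a_{L-j,\gb}\, Z_{j,\gb},
\]
where $Z^a_{m,\gb}$ denotes the single-bead partition function, the prefactor $(j+1)$ bounds the number of positions the largest bead may occupy in the sequence of beads of $\ell$ (the exterior contains at most $j$ beads, each of length $\geq 1$), and the exterior partition function is trivially bounded by $Z_{j,\gb}$. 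The proof of Theorem~\ref{Th:4.1} yields, as an intermediate step, the analogous sharp single-bead asymptotic $Z^a_{m,\gb}\sim K^a_\gb m^{-3/4} e^{\gb m + \tilde\cG(a_\gb)\sqrt{m}}$, whose exponential rate agrees with that in \eqref{eq:4.1}. Plugging these in and dividing by $Z_{L,\gb}$ gives, for $j\leq L/2$,
\[
\PbLb(s_\ell = j) \leq C\,(j+1)\,j^{-3/4}\,\exp\!\Big(\tilde\cG(a_\gb)\big(\sqrt{L-j}+\sqrt{j}-\sqrt{L}\big)\Big).
\]

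Two ingredients then conclude. First, $\tilde\cG(a_\gb)<0$: combining the first-order condition $\tilde\cG'(a_\gb)=0$ with the identity $\tpsi'(q)=\tilde h^q>0$ for $q>0$ (which follows from $\cG'(\tilde h^q)=q$) yields, after a short computation, $\tilde\cG(a_\gb)=-2a_\gb^{-1}\tpsi'(a_\gb^{-2})<0$; the sign $\log(c_\gb/e^\gb)<0$ for $\gb>\gb_c$ ensures the maximum in \eqref{defabeta} is attained in $(0,\infty)$. Second, the elementary inequality $\sqrt{L-j}+\sqrt{j}-\sqrt{L}\geq\tfrac12\sqrt{j}$, valid for $0\leq j\leq L/2$. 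Combining, there exists $c>0$ such that
\[
\PbLb(s_\ell=j)\leq C\,j^{1/4}\,e^{-c\sqrt{j}}
\]
uniformly in $L$ large and $k<j\leq(\log L)^4$. Summing over $j>k$ gives a bound that tends to $0$ as $k\to\infty$, which establishes the theorem.

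The principal technical obstacle I anticipate is cleanly extracting the sharp single-bead asymptotic $Z^a_{m,\gb}\sim K^a_\gb m^{-3/4} e^{\gb m+\tilde\cG(a_\gb)\sqrt{m}}$ with uniform control as $m$ varies (both $m=L-j$ and $m=j$, the latter possibly much smaller than $L$); although it should appear naturally in the proof of Theorem~\ref{Th:4.1}, using it at small or moderate values of $m$ requires some uniformity that has to be made explicit. A secondary subtlety is the combinatorial bookkeeping of the bead decomposition, in particular accounting correctly for the horizontal connectors between beads so that no double-counting takes place when writing $Z_{L,\gb}(s_\ell=j)$ as a product.
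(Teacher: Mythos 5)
Your argument is correct and leads to the same estimate as the paper, but the way you set up the decomposition is genuinely different. The paper does \emph{not} condition on the largest bead; instead it introduces two events: $A_{L,k}=\{\mathfrak{X}\cap\{k,\dots,L-k\}=\emptyset\}$ (no bead boundary in the middle window) and $B_{L,k}$ (the bead covering that window does not begin with $k$ or more zero-length stretches), shows $A_{L,k}\cap B_{L,k}\subset\{|\Imax(\ell)|\geq L-3k\}$, and then bounds $\PbLb((A_{L,k})^c)$ by a union bound over $j\in\{k,\dots,L-k\}$ of $\PbLb(j\in\mathfrak{X})= Z^{\,c}_{j,\gb}\,Z_{L-j,\gb}(\ell_1\geq0)/Z_{L,\gb}\leq Z_{j,\gb}Z_{L-j,\gb}/Z_{L,\gb}$. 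This factorizes exactly (a trajectory with a bead boundary at $j$ splits cleanly into two independent halves), so no extra combinatorial factor is needed, and the zero-stretch subtlety you flag is dealt with separately through $B_{L,k}$ rather than being buried inside a decomposition around $\Imax$. The analytic core is identical: both proofs then plug in the two-sided bound $C_1\,L^{-3/4}e^{\gb L+\tilde\cG(a_\gb)\sqrt{L}}\leq Z_{L,\gb}\leq C_2\,L^{-3/4}e^{\gb L+\tilde\cG(a_\gb)\sqrt{L}}$ (the paper records this uniformly in $L$, which disposes of your ``uniformity in $m$'' worry) and use $\sqrt{L-j}+\sqrt{j}-\sqrt{L}\geq\tfrac12\sqrt{j}$ on $j\leq L/2$ together with $\tilde\cG(a_\gb)<0$ to get a summable tail. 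What your route buys is directness (one event, no $B_{L,k}$) at the cost of the $(j+1)$ positional factor and an invocation of \cite[Thm.~C]{CNP16} to cut the sum at $(\log L)^4$; the latter is unnecessary, since summing over $k<j\leq L/2$ and appealing to the symmetry $j\leftrightarrow L-j$ of $Z_jZ_{L-j}$ already closes the argument, which is how the paper handles it. What the paper's route buys is a tighter polynomial ($j^{-3/4}$ versus your $j^{1/4}$, immaterial here) and a cleaner injectivity argument, since conditioning on $\{j\in\mathfrak{X}\}$ splits the trajectory exactly whereas conditioning on the largest bead requires you to argue (correctly, as you do) that concatenating the left and right exteriors only increases the Hamiltonian and that the resulting map is injective given the position. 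Your derivation of $\tilde\cG(a_\gb)=-2a_\gb^{-1}\tpsi'(a_\gb^{-2})<0$ is a nice explicit check; the paper simply records the negativity of $\tilde\cG$ in passing.
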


\medskip

\subsubsection{Outline of the paper} 

With Section \ref{Dec:prep} below, we introduce some mathematical tools of particular importance for 
the rest of the paper. Thus, in Section \ref{Renewst} we define rigorously the set containing the single-bead trajectories.
Such beads  allow us to decompose any paths in $\Omega_L$ into sub-trajectories that are not interacting with each other. Under the polymer measure, the cumulated lengths of those beads form a renewal process which will be of key importance throughout the paper. 
Section \ref{probrep} is dedicated to the random walk representation 
of the model, that was introduced initially in \cite{NGP13} and provides a probabilistic expression for
every partition function introduced in the paper.   
In Section \ref{th41} we prove Theorem  \ref{Th:4.1}  
subject to Proposition~\ref{prop:aux} 
which gives sharp asymptotics for those partition functions associated with single bead trajectories. 
Proposition~\ref{prop:aux} is proven afterwards subject to Proposition~\ref{lemunif}, 
which is the main feature of this paper since it strongly improves previous estimates (such as \cite[Prop. 2.4--2.5]{CNP16}) by providing an equivalent to 
 the probability that a random walk $X:=(X_i)_{i=0}^{N}$ of law $\bP_{\beta}$ describes a positive excursion, ends up at $0$
and encloses an area $qN^2$ for $q>0$. The proof of Proposition~\ref{lemunif} is divided into $4$ steps, displayed in Section \ref{pr:lemunif} 
after we introduce a tilted law for the random walk $X$
in such a way that the event $\{A_N(X)=q\, N^2, X_N=0\}$ becomes typical. Note that Steps 3 and 4 from Section~\ref{pr:lemunif} require local limit estimates which are displayed in Appendix~\ref{pr:estimG}, and Proposition \ref{controlderiv}
is proven in Appendix~\ref{pr:propderiv}. 
Finally, Theorem \ref{thm:C} is proven in Section \ref{unibead} by using mostly the asymptotics provided by Theorem \ref{Th:4.1}.

\section{Preparations}\label{Dec:prep}
\subsection{One bead trajectories and renewal structure}\label{Renewst}
Let $L\in \mathbb{N}$ and denote by $\Omega_L^{\,\circ}$ the subset of 
$\gO_L$ gathering those single-bead trajectories, i.e., trajectories 
made of non-zero vertical stretches that alternate orientations. Thus,  we set
$\Omega_L^{\,\circ}=\cup_{N=1}^{L/2} \cL_{N,L}^{\, \circ}$ with 
\begin{align}\label{def:gO}
\cL_{N,L}^{\, \circ}&:=\Big\{  (\ell_i)_{i=1}^{N}\in\bbZ^{N}\colon  
\sum_{i=1}^{N}|\ell_i|=L-N, \quad    \ell_i\ell_{i+1} <0 \quad \forall\,1\leq i< N\Big\}.
\end{align}
We also denote by  $ Z_{L,\beta}^{\, \circ}$ the partition function restricted to those trajectories in  $\Omega_L^{\, \circ}$, i.e., 
\begin{align}\label{def:zncirc}
Z_{L,\beta}^{\, \circ}:=\sum_{N=1}^{L/2}\,  \sum_{\ell\in  \cL_{N,L}^{\, \circ}} e^{\beta H(\ell)}.
\end{align}

In order to decompose a general trajectory into single-bead subpaths, we need to deal with the zero-length vertical stretches. This has to be done in such a way that the decomposition gives rise to a true renewal structure,
namely that the realization of a single-bead trajectory has no influence on the value of the partition function associated with the next bead. To that aim, we will integrate the zero-length stretches at the beginning of beads 
and for that we define $\hat\Omega^{\, \circ}$ the set of \emph{extended beads} as
 \begin{equation}\label{def:LLL}
\hat\Omega^{\, \circ}:=\bigcup_{L\geq 2} \hat \Omega_{L}^{\, \circ}
 \end{equation}
where  $\hat \gO_L^{\,\circ}$ is the subset of $\gO_L$  gathering those trajectories which may or may not 
 start with a sequence of zero-length stretches and form subsequently a unique bead, i.e., 
 \begin{equation}\label{def:LL}
\hat\Omega_L^{\, \circ}:=\bigcup_{k=0}^{L-2} \hat \Omega_{L}^{\, \circ,\, k}
 \end{equation}
with
 \begin{align}\label{def:extbead}
 \hat \Omega_L^{\, \circ,\, 0}&:=\Big\{  \ell\in \Omega_L^{\,\circ}\colon  \ell_1>0\Big\}\\
 \nonumber \hat \Omega_L^{\, \circ,\, k}&:=\Big\{   \ell\in \Omega_L\colon N_\ell>k, \ell_1=\dots=\ell_k=0,
 (\ell_{i+k})_{i=1}^{N_\ell-k}\in \Omega_{L-k}^{\,\circ} \Big\}, \quad k\in \{1,\dots,L-2\}.
 \end{align}
 We recall \eqref{def:zncirc} and \eqref{def:LL} and the partition function restricted to those trajectories in $\hat\Omega_L^{\,\circ}$ becomes:
\begin{equation}\label{def:ZLcirc}
\hat Z_{L,\gb}^{\, \circ}:= \sum_{k=0}^{L-2} \bigg[ \frac12 \, \ind_{\{k=0\}}\,   Z_{L,\beta}^{\, \circ} +  
\ind_{\{k\in \N\} }\,  Z_{L-k,\beta}^{\, \circ}\bigg].
\end{equation}
Note that, in the definition of  $\hat \Omega_L^{\, \circ,\, 0}$ in \eqref{def:extbead} the sign of $\ell_1$ is prescribed because
if an extended  bead does not start with a zero-length stretch, then the sign of its first stretch must be the same as that of the last stretch of the  preceding bead;hence the factor $\frac12$ in \eqref{def:ZLcirc} (see Fig.~\ref{fig:beaddecomp}). Of course this latter restriction does not apply to  the very first 
extended bead of a trajectory and this is why we define
\begin{equation}\label{def:barZLcirc}
\bar Z_{L,\gb}^{\, \circ}:= \sum_{k=0}^{L-2}  Z_{L-k,\beta}^{\, \circ}.
\end{equation}

For convenience, we define $\Omega_L^{\,c}$ the subset of $\Omega_L$ containing those trajectories ending with a non-zero stretch, i.e., 
\begin{equation}\label{def:omegalc}
\Omega_L^{\,c}:=\{\ell\in \Omega_L\colon\, \ell_{N_\ell}\neq 0\}.
\end{equation} 
At this stage we can decompose a given trajectory $\ell \in \Omega_L^{\,c}$ into extended beads 
by cutting the trajectory at times $(\tau_j)_{j=0}^{n(\ell)}$ defined as
$\tau_0=0$ and for $j\in \N$ such that $\tau_{j-1}< N_\ell$
\begin{align}\label{deftaugamma}
\tau_j&:=\max\big\{s>\tau_{j-1}\colon\, (\ell_i)_{i=1+\tau_{j-1}}^s\in \hat\Omega^{\, \circ}\quad \text{or}\quad (-\ell_i)_{i=1+\tau_{j-1}}^s\in \hat\Omega^{\, \circ} \big \}.
\end{align}
Then $n(\ell)$ is the number of beads composing $\ell$ and satisfies $\tau_{n(\ell)}=N_\ell$, thus
\begin{equation}\label{defdec}
\ell=\odot_{j=1}^{n(\ell)} \, \mathcal B_j \quad 
\text{with}\quad  
\mathcal B_j:=(\ell_{\tau_{j-1}+1},\dots,\ell_{\tau_j})\,,
\end{equation}
where $\odot$ denotes the concatenation. We also set $\mathfrak{X}_0=0$ and for $j\in \{1,\dots,n_\ell\}$, we denote by $\mathfrak{X}_{j}-\mathfrak{X}_{j-1}$ the number of steps (or monomers) 
that the $j$-th bead is made of (also 
referred to as {\it total length} of the bead), that is,
\begin{equation}\label{defle}
\mathfrak{X}_{j}-\mathfrak{X}_{j-1}=\tau_j-\tau_{j-1}+|\ell_{\tau_{j-1}+1}|+\dots+|\ell_{\tau_j}|, \quad j\in \{1,\dots,n_\ell\}.
\end{equation}
The set $\mathfrak{X}:=\{0,\mathfrak{X}_1,\dots,\mathfrak{X}_{n_\ell}\}$ contains the cumulated lengths of the beads forming $\ell$, in particular
$\mathfrak{X}_{n_\ell}=L$. 

\begin{remark}\label{beadtrajnco}
Note that  a trajectory $\ell\in \Omega_L\setminus\Omega_L^{\, c}$ may also be decomposed into extended beads as in 
(\ref{deftaugamma}--\ref{defle}). The only difference is that the very last bead is followed by a sequence of zero-length vertical stretches, i.e., 
$\mathfrak{X}_{n_\ell}=L-k$ for some $k\in \{1,\dots,L\}$ and the last $k$ vertical stretches in $\ell$ have zero-length.
\end{remark}
 
 \begin{figure}
\includegraphics[scale=0.6]{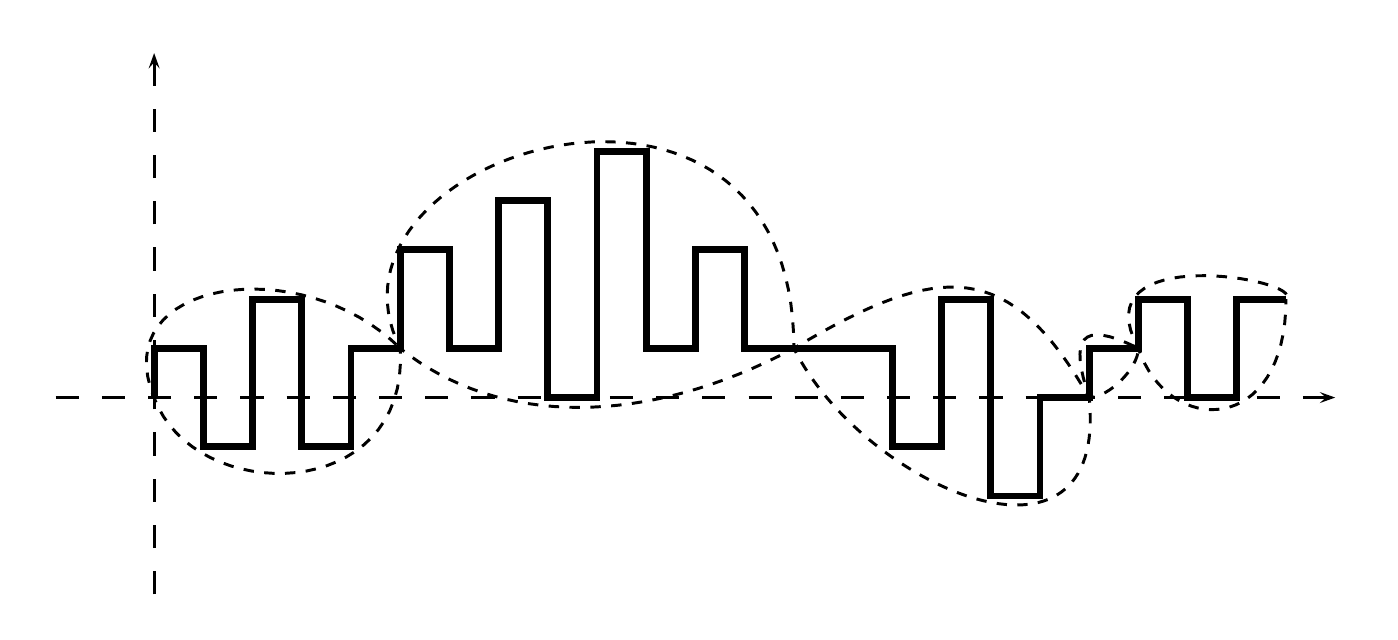}
\caption{\small Decomposition of an IPDSAW trajectory into beads. The first bead of the decomposition may start with a stretch with any sign, whereas 2nd, 4th and 5th beads start with non-zero stretches with constrained signs, and the 3rd bead starts with two zero-length stretches. Notice that the last vertical stretch of this trajectory is non-zero, so it lies in $ Z_{L,\beta}^{\, c}$.}
\label{fig:beaddecomp}
\end{figure}

By using this bead-decomposition we can rewrite $Z_{L,\beta}^{\, c}$ the partition function restricted to $\Omega_L^{\,c}$ as 
\begin{align}\label{beaddec}
\nonumber Z_{L,\beta}^{\, c}&=\sum_{r=1}^{L/2} \sum_{t_1+\dots+t_r=L}  Z_{L,\beta}^{\, c}\big(n(\ell)=r, \mathfrak{X}_i-\mathfrak{X}_{i-1}=t_i, \forall 1\leq i\leq r\big)\\
&=\sum_{r=1}^{L/2} \sum_{t_1+\dots+t_r=L}  \bar Z_{t_1,\gb}^{\, \circ}  \prod_{j=2}^r  \hat Z_{t_j,\gb}^{\, \circ},
\end{align} 
where for $D\subset \Omega_L^{\,c}$, we denote by $ Z_{L,\beta}^{\, c}(D)$ the partition function restricted to $D$ (see Fig.~\ref{fig:beaddecomp}).

\subsection{Probabilistic representation}\label{probrep}
The aim of this section is to give a probabilistic expression of the partition function $Z_{L,\beta}^{\, \circ}$ and to use it subsequently to 
provide closed expression of the generating functions associated with  $(\hat Z_{L,\beta}^{\, \circ})_{L\geq 2}$
and with $(\bar Z_{L,\beta}^{\, \circ})_{L\geq 2}$. 
\smallskip

We recall the definition of $\Pb_\gb$ in \eqref{def:Pgb} and 
for $x\in \bbZ$ we denote by $\Pb_{\beta,x}$ the law of a random walk $X:=(X_i)_{i\geq 0}$ 
starting from $x$ (i.e., $X_0=x$) and such that
$(X_{i+1}-X_i)_{i\geq 0}$ is an  i.i.d. sequence of random variables with law $\Pb_\gb$. In the case $x=0$, we will
omit the $x$-dependance of $\Pb_{\beta,x}$ when there is no risk of confusion. We also recall from \eqref{airealg} that $A_N$ defines the algebraic area 
enclosed in-between a random walk trajectory and the $x$-axis after $N$ steps.
%

Recall \eqref{def:gO} and \eqref{def:zncirc}, and let us now briefly remind the transformation that allows us to give a probabilistic representation of  $Z_{L,\beta}^{\, \circ}$ (we refer to  \cite{CNPT18} 
for a review on the recent progress made on IPDSAW by using probabilistic tools).
First, note that for 
$x,y\in\mathbb{Z}$ one can write $x\;\tilde{\wedge}\;y=\frac12\left(|x|+|y|-|x+y|\right)$. By using the latter equality to compute the Hamiltonian (recall \eqref{defH}), we  may rewrite \eqref{def:zncirc} as
\begin{align}\label{ls}
\nonumber Z_{L,\beta}^{\,\circ}
&=\sum_{N=1}^{L/2}\sum_{\substack{\ell\in\mathcal{L}_{N,L}^{\,\circ}\\ \ell_0=\ell_{N+1}=0}}\exp{\Bigl(\beta\sum_{n=1}^N{|\ell_n|}-\tfrac{\beta}{2}\sum_{n=0}^N{|\ell_n+\ell_{n+1}|}\Bigr)}\\
&=c_\beta\, e^{\beta L} \sum_{N=1}^{L/2}\left(\tfrac{c_\beta}{e^\beta}\right)^N\sum_{\substack{\ell\in\mathcal{L}_{N,L}^{\, \circ}
\\ \ell_0=\ell_{N+1}=0}}\prod_{n=0}^{N}\frac{\exp{\Bigl(-\tfrac{\beta}{2}|\ell_n+\ell_{n+1}|\Bigr)}}{c_\beta}.
\end{align}
Henceforth, for  convenience, we will assume  that any $\ell \in \cL_{N,L}^{\, \circ}$ satisfies $\ell_0=\ell_{N+1}=0$. At this stage, we denote by $B_N^{+}$ the set of those $N$-step integer-valued random walk trajectories, starting and ending at $0$ and remaining positive in-between, i.e., 
\begin{equation}\label{defbn}
B_{N}^{+}:=\big\{(x_i)_{i=0}^{N}\in \mathbb{Z}^{N+1}\colon\, X_0=X_N=0,\,  X_i>0\ \    \forall 0<i<N\big\}.
\end{equation}
It remains to notice that the application 
\begin{align}
\nonumber T_N:\{\ell\in \cL_{N,L}^{\, \circ}\colon\, \ell_1>0\}\, &\mapsto \big\{(x_i)_{i=0}^{N+1}\in B_{N+1}^{+}\colon\, A_{N}(x)=L-N\big\}\\
(\ell_i)_{i=0}^{N+1}&\to  \ ((-1)^{i-1}\,\ell_i)_{i=0}^{N+1}
\end{align}
is a one-to-one correspondence, and that for $\ell \in \cL_{N,L}^{\, \circ}$  the increments  of $T_N(\ell)$  are in modulus equal to $(|\ell_{i-1}+\ell_i|)_{i=1}^{N+1}$. Therefore, set $\Gamma_\beta:=c_\beta/e^{\beta}$
and \eqref{ls} becomes 
\begin{align}\label{ls2} 
 Z_{L,\beta}^{\, \circ} \, e^{-\beta L}
&=2\, c_\beta\,  \sum_{N=1}^{L/2}\Gamma_\beta^{\,N}\, \mathbf{P}_\beta \big(X\in B_{N+1}^+, \,  A_{N}(X)=L-N\big )
\end{align}
where the factor $2$ in the r.h.s. in \eqref{ls2} is required to take into account those $\ell\in \cL_{N,L}^{\, \circ}$ satisfying $\ell_1<0$.

With the next Lemma, we provide an expression of the generating functions   $\sum_{L\geq 2} \hat Z_{L,\beta}^{\, \circ}\,  z^L$ and 
$\sum_{L\geq 2} \bar Z_{L,\beta}^{\, \circ} \, z^L$ at $z=e^{-\beta}$. This is needed to determine $K_\beta$ in Theorem \ref{Th:4.1}. 
\begin{lemma}\label{exprgene}
For $\beta>0$, 
\begin{align}\label{exprdelta}
\nonumber \delta_1(\beta)&:= \sum_{L\geq 2} \bar Z_{L,\gb}^{\, \circ} \, \, e^{-\beta L}=\frac{2\, e^\gb}{1-e^{-\gb}}r_\gb\\
\delta_2(\beta)&:= \sum_{L\geq 2} \hat Z_{L,\gb}^{\, \circ} \, \, e^{-\beta L}=  e^\gb c_{2\gb}\,r_\gb
\end{align}
with $r_\beta:=\Ebb\big[ \ind_{\{X_1>0\}}\,  \ind_{\{X_\rho=0\}}\,  (\Gamma_\beta)^{\rho}\big]$
where $\rho:=\inf\{i\geq 1\colon\, X_i\leq 0\}$.  Moreover,
\begin{equation}
r_\beta \,=\, \left\{\begin{array}{ll}
+\infty & \quad\text{if }\beta<\beta_c\,,\\
1-e^{-\gb}-e^{-\frac\gb2+\mathrm{arccosh}\big(e^{-\beta/2} \cosh(\beta)\big)} & \quad\text{if }\beta\geq\beta_c\,.
\end{array}\right.
\end{equation}
\end{lemma}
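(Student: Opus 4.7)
My strategy is to reduce both generating functions to the single sum $S(\beta) := \sum_{L\geq 2} Z_{L,\beta}^{\,\circ} e^{-\beta L}$, evaluate $S(\beta) = 2e^\beta r_\beta$ via the random-walk representation \eqref{ls2}, and then compute $r_\beta$ explicitly by solving a hitting-time recursion. A change of summation variable $m = L-k$ in the definitions \eqref{def:barZLcirc} and \eqref{def:ZLcirc} yields $\delta_1(\beta) = S(\beta)/(1-e^{-\beta})$ and $\delta_2(\beta) = S(\beta)(1+e^{-\beta})/[2(1-e^{-\beta})]$. Combined with the algebraic identity $e^\beta c_{2\beta} = (1+e^\beta)/(1-e^{-\beta})$, both claimed identities reduce to the single statement $S(\beta) = 2e^\beta r_\beta$. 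To establish this, I would insert \eqref{ls2} and swap the sums over $L$ and $N$: the inner sum over $L$ removes the area constraint and leaves $\Pb_\beta(X\in B_{N+1}^+) = \Pb_\beta(X_1>0,\rho = N+1, X_\rho = 0)$, with $\rho$ as in the statement. Summing $\Gamma_\beta^N$ against this and shifting the exponent from $N$ to $\rho = N+1$ produces $\Gamma_\beta^{-1} r_\beta$, and using $c_\beta/\Gamma_\beta = e^\beta$ gives exactly $S(\beta) = 2e^\beta r_\beta$.

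The main task is the closed form for $r_\beta$. I would condition on $X_1 = k\geq 1$ in the definition of $r_\beta$, which reduces the problem to the hitting-time generating function $W_k(z) := \Ebb_k[z^{\tau_0}\ind_{\{X_{\tau_0}=0\}}]$, where $\tau_0 := \inf\{n\geq 1: X_n \leq 0\}$ and $\Ebb_k$ denotes expectation under the walk started at $k$. A one-step analysis using $\Pb_\beta(\cdot=j) = r^{|j|}/c_\beta$, with $r := e^{-\beta/2}$, gives a linear recursion with kernel $r^{|l-k|}/c_\beta$; the ansatz $W_k(z) = c(z)\xi(z)^k$ collapses it to two algebraic conditions---one forcing $c(z) = (\xi-r)/\xi$ (which annihilates the $r^k$ source) and one giving the quadratic $\xi^2 - ((1+r^2-z(1-r)^2)/r)\xi + 1 = 0$ (matching the $\xi^k$ coefficient)---with boundedness of $W_k$ selecting the unique root $\xi \in (0,1)$. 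Specializing to $z = \Gamma_\beta$, the identity $z/c_\beta = e^{-\beta} = r^2$ forces the quadratic to simplify to $r\xi^2 - (1+r^4)\xi + r = 0$, equivalently $\xi + \xi^{-1} = 2 e^{-\beta/2}\cosh\beta$, so that $\xi = e^{-\mathrm{arccosh}(e^{-\beta/2}\cosh\beta)}$. Summing the geometric series $r_\beta = \sum_{k\geq 1}\Gamma_\beta \Pb_\beta(X_1=k) W_k(\Gamma_\beta)$ yields $r_\beta = r^3(\xi-r)/(1-r\xi)$, and the quadratic relation $r\xi^2 = (1+r^4)\xi - r$ reduces this to $r_\beta = 1 - e^{-\beta} - r/\xi$, matching the claim.

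For $\beta<\beta_c$, one has $\Gamma_\beta > 1$ (since $\beta\mapsto \Gamma_\beta$ is decreasing and equals $1$ at $\beta_c$), and standard excursion estimates for the centered recurrent Laplace walk give $\Pb_\beta(X\in B_{N+1}^+) \gtrsim N^{-3/2}$, so the series defining $r_\beta$ diverges. The step I expect to need the most care is the ansatz for $W_k$: rigorously verifying that a single geometric mode $\xi^k$ exhausts the bounded solutions of the linear recursion, which should follow from standard uniqueness arguments for linear Markov equations.
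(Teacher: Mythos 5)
Your proposal is correct in its reduction of $\delta_1,\delta_2$ to the single sum $S(\beta)=\sum_L Z^{\,\circ}_{L,\beta}e^{-\beta L}$ (this is precisely the paper's computation \eqref{ls22}--\eqref{computdelta1}), and in the probabilistic identification $S(\beta)=2e^\beta r_\beta$ via \eqref{ls2}. Where you diverge from the paper is in the evaluation of $r_\beta$, and this divergence is genuine. The paper first uses the overshoot property of the Laplace walk --- $(\rho,X_1,\dots,X_{\rho-1})$ is independent of $X_\rho$, with $-X_\rho$ geometric of parameter $1-e^{-\beta/2}$ --- to decouple $r_\beta$ from $\bE_\beta[(\Gamma_\beta)^\rho]$ in \eqref{eq:rho1}, and then evaluates the latter by exhibiting the exponential martingale $e^{-\zeta X_n+n\log\Gamma_\beta}$ with $\cL(-\zeta_\beta)=-\log\Gamma_\beta$ and invoking optional stopping (the very same device reappears in Lemma~\ref{convkappa} to compute $\kappa(x)$, so the machinery is being reused). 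You instead condition on $X_1=k$, write $r_\beta$ as a geometric sum against the hitting generating function $W_k(z)=\Ebb_k[z^{\tau_0}\ind_{\{X_{\tau_0}=0\}}]$, and solve the resulting convolution equation by the ansatz $W_k=c\,\xi^k$; your algebra is correct (your $\xi$ is the paper's $e^{-\zeta_\beta}$, and the quadratic $r\xi^2-(1+r^4)\xi+r=0$ at $z=\Gamma_\beta$ is exactly $\xi+\xi^{-1}=2e^{-\beta/2}\cosh\beta$). The martingale approach buys a clean rigorous statement at the cost of verifying uniform integrability at $\rho$; your recursion approach is more self-contained and elementary, but must confirm that $c\,\xi^k$ is the actual bounded solution. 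You flag this yourself; note that for $\beta>\beta_c$ one has $z=\Gamma_\beta<1$, so the substochastic operator $z\,p(l-k)\ind_{\{l\geq1\}}$ is a contraction on $\ell^\infty$ and the Neumann-series argument gives uniqueness directly, while at $\beta=\beta_c$ (where $\xi=1$, $W_k\equiv1-e^{-\beta/2}$) one can either pass to the limit $\beta\downarrow\beta_c$ or observe directly that the constant function is forced by the memoryless overshoot. With that filled in, your argument is a complete and valid alternative proof.
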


%
\begin{proof}
We recall \eqref{ls2} and we start by computing
\begin{align}\label{ls22} 
\sum_{L\geq 2} Z_{L,\beta}^{\, \circ} \, e^{-\beta L}
\nonumber &=2\, c_\beta\,  \sum_{N\geq 1}\Gamma_\beta^{\,N}\, \sum_{L\geq 2N}  \mathbf{P}_\beta \big(X\in B_{N+1}^+, \,  A_{N}(X)=L-N\big )\\
\nonumber &=2\, c_\beta\,  \sum_{N\geq 1}\Gamma_\beta^{\,N}\,  \mathbf{P}_\beta \big(X\in B_{N+1}^+\big)\\
&=2\,  e^{\beta}\,  r_\beta.
\end{align}
Then, we use \eqref{def:ZLcirc} to write 
 \begin{align}\label{computdelta1}
\nonumber \sum_{L\geq 2} \hat Z_{L,\gb}^{\, \circ} \, \, e^{-\beta L}&:=\sum_{L\geq 2} \sum_{k=0}^{L-2} e^{-\beta L}  \bigg[ \frac12 \, \ind_{\{k=0\}}\,   Z_{L,\beta}^{\, \circ} +  
\ind_{\{k\in \N\} }\,  Z_{L-k,\beta}^{\, \circ}\bigg]\\
\nonumber &=\frac12  \sum_{L\geq 2} e^{-\beta L} Z_{L,\beta}^{\, \circ} + \sum_{k=1}^\infty e^{-\beta k} \sum_{t=2}^\infty e^{-\beta t} Z_{t,\beta}^{\, \circ} \\
 &=\Big(\frac12 +\frac{e^{-\beta}}{1-e^{-\beta}}\Big)  \sum_{L\geq 2} e^{-\beta L} Z_{L,\beta}^{\, \circ},
\end{align}
and it remains to combine \eqref{ls22} and \eqref{computdelta1} to obtain the second equality in \eqref{exprdelta}.
Using \eqref{def:barZLcirc}, the very same computation allows us to obtain the first equality in \eqref{exprdelta}.
%

Let us now compute $\Ebb[(\Gamma_\gb)^\rho]$, which will yield a formula for $r_\gb$. Indeed, the fact that the increments of $X$ follow a discrete Laplace law entails that 
$(\rho, X_1,\dots,X_{\rho-1})$ and $X_{\rho}$ are independent, and moreover that $-X_\rho$ follows a geometric law on $\N\cup\{0\}$ with parameter $1-e^{-\beta/2}$. Thereby,
\begin{align}\label{eq:rho1}
\nonumber \Ebb\big[(\Gamma_\gb)^\rho\big]\,&=\, \Ebb\big[1_{\{X_1>0\}}(\Gamma_\gb)^\rho\big] + \Gamma_\gb \,\Pbb(X_1\leq 0)\\ 
\nonumber &=\,\frac{r_\gb}{\Pbb(-X_\rho=0)} + \frac{c_\gb}{e^\gb}\left(\frac1{c_\gb}+\frac12\Big(1-\frac1{c_\gb}\Big)\right)\\
&=\, \frac{r_\gb+e^{-\gb}}{1-e^{-\gb/2}}\,,
\end{align} 
which will conclude the proof.

It is a  straightforward application of \cite[(4.5)]{CC13}
that there exists $c>0$ depending on $\beta$ only such that 
\begin{equation}\label{equivexc}
\Pbb(\rho=t)\,\sim\, \frac{c}{t^{3/2}}\qquad\text{as}\quad t\to \infty\;.
\end{equation}  
Recall that $\gb\mapsto\Gamma_\gb$ is decreasing on $(0,\infty)$; in particular when $\gb<\gb_c$, one has $\Gamma_\gb>1$, so \eqref{eq:rho1} and \eqref{equivexc} imply $r_\gb=+\infty$.

When $\gb\geq\gb_c$, notice that $(e^{-\zeta X_n+\log(\Gamma_\gb)n})_{n\geq0}$ is a martingale under $\Pbbzero$ if and only if $\cL(-\zeta )=-\log(\Gamma_\gb)$. Recalling $\Gamma_\gb\leq1$ and that $\cL$ is decreasing, not bounded on $(-\gb/2,0]$, there is a unique $\zeta_\gb\in[0,\gb/2)$ satisfying this equality, given by
\begin{equation}\label{eq:rho2}
\zeta_\gb \,=\, \mathrm{arccosh}\big((1-\Gamma_\gb)\cosh(\gb/2) + \Gamma_\gb\big)\,=\mathrm{arccosh}\big(e^{-\beta/2} \cosh(\beta)\big).
\end{equation}
Thanks to a stopping time argument (we do not write the details here), we finally obtain
\[
\Ebb\big[(\Gamma_\gb)^\rho\big] \,=\, \Ebb\Big[e^{-\zeta_\gb X_\rho}\Big]^{-1} \,=\, \frac{1-e^{\zeta_\gb-\gb/2}}{1-e^{-\gb/2}}\,,
\]
which concludes the proof by recollecting \eqref{eq:rho1} and \eqref{eq:rho2}.
\end{proof}
To end this section we state and prove the following corollary that will be needed in the proof of Theorem \ref{Th:4.1} (see Section \ref{th41}) below.
\begin{corollary}\label{controlbeta2}
For any $\beta>\beta_c$, we have $\delta_2(\beta)<1$.
\end{corollary}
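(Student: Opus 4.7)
The approach is a direct algebraic reduction using the explicit formulas of Lemma~\ref{exprgene}. Setting $y := e^{-\beta/2}\in(0,1)$ and $v := e^u$ with $u := \mathrm{arccosh}(e^{-\beta/2}\cosh\beta)\geq 0$, we have $r_\beta = 1 - y^2 - yv$, $e^\beta = y^{-2}$ and $c_{2\beta} = (1+y^2)/(1-y^2)$. A short manipulation shows that
\[
\delta_2(\beta) < 1 \,\Longleftrightarrow\, v > R(y) := \frac{1-y^2}{y(1+y^2)}\,.
\]

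The crucial observation is that for $\beta > \beta_c$ both $v$ and $R(y)$ exceed $1$: $v>1$ since $u>0$, and $R(y)>1$ is equivalent to $y + y^2 + y^3 < 1$, which is exactly $\beta > \beta_c$, recalling that $\beta_c$ is characterized by $y_c^3 + y_c^2 + y_c = 1$ with $y_c := e^{-\beta_c/2}$. Since $t \mapsto t + 1/t$ is strictly increasing on $[1,\infty)$, the inequality $v > R(y)$ is equivalent to $v + 1/v > R(y) + 1/R(y)$. Here $v + 1/v = 2\cosh(u) = (1+y^4)/y$, and a direct computation yields $R(y) + 1/R(y) = (1 - y^2 + 3 y^4 + y^6)/(y(1-y^4))$, so after clearing denominators the problem reduces to the polynomial inequality
\[
h(y) \,:=\, 1 - 3y^2 - y^4 - y^6 \,>\, 0 \quad \text{for every } y \in (0, y_c)\,.
\]

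Since $h'(y) = -2y(3 + 2y^2 + 3y^4) < 0$ on $(0,1]$, $h$ is strictly decreasing, so it suffices to verify $h(y_c) = 0$. This requires using the defining cubic $y_c^3 = 1 - y_c - y_c^2$ twice: once to compute $y_c^4 = y_c \cdot y_c^3 = 2y_c - 1$, and again to expand $y_c^6 = (1 - y_c - y_c^2)^2$ and reduce it to $2 - 2 y_c - 3 y_c^2$; substituting into $h$ then yields exactly~$0$, which is itself consistent with the boundary value $\delta_2(\beta_c) = 1$.

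The main obstacle, beyond the bookkeeping of the polynomial identities at $y_c$, is recognizing that the a priori unpleasant comparison $v > R(y)$ (with $v$ hiding an $\mathrm{arccosh}$) can be turned into a tractable polynomial inequality via the monotonicity of $t \mapsto t + 1/t$ on $[1,\infty)$, thereby avoiding any explicit square-root manipulation.
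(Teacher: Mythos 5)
Your proof is correct; I verified the algebra carefully (the reduction $\delta_2(\beta)<1\iff v>R(y)$, the simplification $R(y)+1/R(y)=(1-y^2+3y^4+y^6)/(y(1-y^4))$, the resulting polynomial inequality $h(y)=1-3y^2-y^4-y^6>0$, the monotonicity $h'<0$ on $(0,1]$, and the reduction $h(y_c)=0$ via $y_c^3=1-y_c-y_c^2$). However, it takes a genuinely different route from the paper. The paper's proof is shorter and more conceptual: it first observes that $\beta\mapsto\delta_2(\beta)$ is strictly decreasing on $(\beta_c,\infty)$, because for each fixed $\ell\in\Omega_L$ the exponent $H_{L,\beta}(\ell)-\beta L=\beta\big(\sum_n\ell_n\,\tilde\wedge\,\ell_{n+1}-L\big)$ has a strictly negative coefficient of $\beta$ (since $\sum_n\ell_n\,\tilde\wedge\,\ell_{n+1}\le L-N<L$), so each summand of $\delta_2(\beta)=\sum_{L\ge2}\hat Z_{L,\beta}^{\,\circ}e^{-\beta L}$ is decreasing in $\beta$. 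It then suffices to compute $\delta_2(\beta_c)=1$, which is a one-line check using $\Gamma_{\beta_c}=1$, $\zeta_{\beta_c}=0$, and the cubic relation for $e^{\beta_c/2}$. What the paper's argument buys is that the monotonicity is structural (it holds at the level of the partition function, before any explicit formula for $r_\beta$ is invoked), and the boundary computation is immediate; what your approach buys is that it is entirely self-contained once the explicit formula for $\delta_2(\beta)$ is in hand, avoiding the separate monotonicity lemma, and the clever trick of applying $t\mapsto t+1/t$ to dispose of the $\mathrm{arccosh}$ neatly converts the inequality into a polynomial one. One point worth making explicit in your write-up: the strict inequality $v>1$ for $\beta>\beta_c$ (i.e.\ $u>0$) requires knowing that $e^{-\beta/2}\cosh\beta>1$ strictly for $\beta>\beta_c$, which holds since $1+y^4-2y=(y-1)(y^3+y^2+y-1)>0$ exactly when $y<y_c$; you assert $u>0$ without proof, though it is true.
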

\begin{proof}
We observe that $\beta\to \delta_2(\beta)$ is decreasing simply
because for every $L\geq 2$ and every $\ell \in \Omega_L$ the quantity $\beta\to  H_{L,\beta}(\ell)-\beta L$ is non-increasing and even decreasing if  $ H_{L,\beta}(\ell)<\beta L$ (recall \eqref{defH}).  Therefore, the corollary will be proven once we show $\delta_2(\beta_c)=1$. Recall that $\Gamma_{\gb_c}=1$, which ensures that $e^{\gb_c/2}$ is a solution to $X^3-X^2-X-1=0$ and that $\zeta_{\beta_c}=0$ in \eqref{eq:rho2}. This implies both
\[r_{\gb_c} \,=\, 1- e^{-\gb_c} - e^{-\gb_c/2} \,=\, e^{-3\gb_c/2}\,,\]
and
\[\delta_2(\gb_c) \,=\, e^{-\beta_c/2} \bigg(\frac{1+e^{-\beta_c}}{1-e^{-\beta_c}}\bigg) \,=\, 1\,, \]
which concludes the proof.
\end{proof}

\section{Proof of Theorem \ref{Th:4.1}}\label{th41}

In this section we provide a sharp estimate for the partition function of single-bead trajectories in Proposition~\ref{prop:aux}, with which we prove Theorem~\ref{Th:4.1}. The proof of Proposition~\ref{prop:aux} is displayed afterwards subject to Proposition~\ref{lemunif} and Lemma~\ref{rest-ext}.
\begin{proposition}\label{prop:aux}
For $\gb>\gb_c$, there exists $K^{\, \circ}_\beta>0$ such that
\begin{equation}\label{eq:4.22}
Z^{\, \circ}_{L,\gb}\underset{L\to \infty}{\sim}
\frac{K_\beta^{\, \circ} }{L^{3/4}}e^{\gb L + \tilde \cG(a_\gb) \sqrt{L}}.
\end{equation}
\end{proposition}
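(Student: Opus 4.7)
The starting point is the random walk identity \eqref{ls2}, which reads
\begin{equation*}
e^{-\beta L}\,Z_{L,\beta}^{\,\circ} \,=\, 2\,c_\beta \sum_{N=1}^{\lfloor L/2\rfloor}\Gamma_\beta^{\,N}\,\Pb_\beta\big(X\in B_{N+1}^+,\ A_N(X)=L-N\big).
\end{equation*}
Setting $q_N:=(L-N)/N^2$, the sharp local limit theorem of Proposition~\ref{lemunif} provides, uniformly for $N$ in an appropriate window,
\begin{equation*}
\Pb_\beta\big(X\in B_{N+1}^+,\ A_N(X)=L-N\big)\,\sim\,\frac{C_{\beta,q_N}}{N^2}\,e^{-N\tpsi(q_N)}\,.
\end{equation*}
(The $N^2$ prefactor combines the two-dimensional local CLT for $(X_N/\sqrt N, A_N/N^{3/2})$ at $0$ and the escape-probability factor $\kappa(\tilde h^{q_N}/2)^2$ hidden in $C_{\beta, q_N}$.) Injecting this equivalent recasts $e^{-\beta L}Z_{L,\beta}^{\,\circ}$, up to a tail error, as $2c_\beta\sum_N \frac{C_{\beta,q_N}}{N^2}\exp\{N\log\Gamma_\beta-N\tpsi(q_N)\}$, which is well suited to Laplace's method.

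\textbf{Saddle point and Gaussian sum.} Parametrizing $N=x\sqrt{L}$ gives $q_N=x^{-2}-1/(x\sqrt{L})$, and a first-order Taylor expansion of $\tpsi$ around $x^{-2}$ yields
\begin{equation*}
N\log\Gamma_\beta-N\tpsi(q_N)\,=\,\sqrt{L}\,\tilde\cG(x)+\tpsi'(x^{-2})+O(L^{-1/2})\,,
\end{equation*}
with $\tilde\cG$ as in~\eqref{defGF}. By definition $a_\beta=\argmax\tilde\cG$ and $\tilde\cG''(a_\beta)<0$ (a consequence of the strict convexity of $\tpsi$, itself inherited from Lemma~\ref{diffeotildeG}), so the summand is peaked on the scale $N-a_\beta\sqrt{L}=O(L^{1/4})$. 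Within that window $q_N\to a_\beta^{-2}$, and the continuity of $C_{\beta,q}$ legitimizes replacing $C_{\beta,q_N}/N^2$ by $C_{\beta,a_\beta^{-2}}/(a_\beta^2 L)$ at no asymptotic cost. A Gaussian summation of $\exp\{\tfrac12 \sqrt L\,\tilde\cG''(a_\beta)(x-a_\beta)^2\}$ along the integer lattice (spacing $1/\sqrt L$ in $x$) produces the extra factor $L^{1/4}\sqrt{2\pi/|\tilde\cG''(a_\beta)|}$. Collecting the constants yields~\eqref{eq:4.22} with
\begin{equation*}
K_\beta^{\,\circ}\,=\,\frac{2\,c_\beta\,\sqrt{2\pi}\,C_{\beta,a_\beta^{-2}}\,e^{\tpsi'(a_\beta^{-2})}}{a_\beta^2\,|\tilde\cG''(a_\beta)|^{1/2}}\,.
\end{equation*}

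\textbf{Tails and main obstacle.} The above analysis requires that $N$'s lying outside a window $|N-a_\beta\sqrt L|\leq L^{1/4+\eps}$ contribute negligibly, and this is precisely the role of Lemma~\ref{rest-ext}: it supplies crude upper bounds on $\Pb_\beta(X\in B_{N+1}^+, A_N=L-N)$ for all such $N$, which combined with the strict concavity $\tilde\cG(a_\beta)-\tilde\cG(x)\geq c(x-a_\beta)^2$ near $a_\beta$ and with exponential escape further away, dominate the saddle-point mass by a factor $\exp\{-cL^{2\eps}\}$. The main obstacle is extracting Proposition~\ref{lemunif} as a \emph{uniform} local limit theorem over an $o(1)$-neighbourhood of $q=a_\beta^{-2}$: only this uniformity justifies substituting $C_{\beta,q_N}\to C_{\beta,a_\beta^{-2}}$ simultaneously at all $N$ in the $L^{1/4}$-window. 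Once this uniformity and the tail bound of Lemma~\ref{rest-ext} are in place, the Laplace calculation above closes the proof.
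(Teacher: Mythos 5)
Your proof follows essentially the same route as the paper: start from the random-walk identity \eqref{ls2}, use Lemma~\ref{rest-ext} to localize $N$ on the scale $\sqrt{L}$, feed the sharp uniform local limit theorem of Proposition~\ref{lemunif} into the sum, Taylor-expand $\tpsi$ to pull out $e^{\tpsi'(x^{-2})}$, and run a Laplace/Gaussian summation around $a_\beta$ using the strict concavity of $\tilde\cG$. The structure matches the paper's proof in Section~\ref{th41} step for step.

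There is however a small but consequential bookkeeping slip in your application of Proposition~\ref{lemunif}. The event $\{X\in B_{N+1}^+,\ A_N(X)=L-N\}$ is $\cV_{N+1,\,L-N}$, i.e.\ an excursion of $N+1$ steps (not $N$), with $A_{N+1}=A_N=L-N$. Proposition~\ref{lemunif} applied at $n=N+1$ gives
\[
\Pb_\beta\big(\cV_{N+1,\,L-N}\big)\,\sim\,\frac{C_{\beta,q'}}{(N+1)^2}\,e^{-(N+1)\tpsi(q')},\qquad q':=\frac{L-N}{(N+1)^2},
\]
whereas you wrote $\frac{C_{\beta,q_N}}{N^2}e^{-N\tpsi(q_N)}$ with $q_N=(L-N)/N^2$. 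The two differ asymptotically by the constant factor $e^{2q_N\tpsi'(q_N)-\tpsi(q_N)}$, which at the saddle point $q_N\to a_\beta^{-2}$ equals $\Gamma_\beta^{-1}$ (this follows from the saddle relation $\tilde\cG'(a_\beta)=0$, i.e.\ $\log\Gamma_\beta=\tpsi(a_\beta^{-2})-2a_\beta^{-2}\tpsi'(a_\beta^{-2})$). The paper avoids this by re-indexing $M=N+1$ first, in \eqref{rwrep}, which converts the external prefactor from $2c_\beta$ to $2c_\beta/\Gamma_\beta=2e^\beta$. As a result your final constant reads $2c_\beta$ where it should read $2e^\beta$, i.e.\ it is too small by a factor $\Gamma_\beta$. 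This does not affect the existence statement of Proposition~\ref{prop:aux}, but it would give the wrong value of $K_\beta$ when plugged into \eqref{convpartfunc2}, \eqref{valuekb}.

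A second, more minor remark: Lemma~\ref{rest-ext} by itself only tells you that $N\notin[a_1,a_2]\sqrt L$ gives a negligible contribution; the reduction to the smaller window $|N-a_\beta\sqrt L|\leq RL^{1/4}$ is handled separately via the quadratic bound $\tilde\cG(x)\leq\tilde\cG(a_\beta)-c(x-a_\beta)^2$ on $[a_1,a_2]$, as the paper does with $\tilde Q^{R,-}_{L,\beta}$. You conflate the two regimes in your last paragraph, though the logic you describe is otherwise sound.
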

\begin{corollary}\label{prop:4.2}
For $\gb>\gb_c$, there exist $\hat K_\beta>0$ and $\bar K_\beta>0$ such that
\begin{equation}\label{eq:4.2}
\hat Z^{\, \circ}_{L,\gb} \underset{L\to \infty}{\sim}
\frac{\hat K_\beta }{L^{3/4}}e^{\gb L + \tilde \cG(a_\gb) \sqrt{L}}\quad \text{and}\quad \bar Z^{\, \circ}_{L,\gb}\underset{L\to \infty}{\sim}
\frac{\bar K_\beta }{L^{3/4}}e^{\gb L + \tilde \cG(a_\gb) \sqrt{L}}\,.
\end{equation}
\end{corollary}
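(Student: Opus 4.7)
The plan is to reduce both sums to $Z_{L,\gb}^{\,\circ}$ and then pass to the limit termwise, thanks to Proposition~\ref{prop:aux}. Recalling \eqref{def:ZLcirc}--\eqref{def:barZLcirc}, I would rewrite
\begin{align*}
\bar Z_{L,\gb}^{\,\circ} &= Z_{L,\gb}^{\,\circ} \, \sum_{k=0}^{L-2} r_{L,k}, \qquad \hat Z_{L,\gb}^{\,\circ} = Z_{L,\gb}^{\,\circ} \, \Big( \tfrac{1}{2} + \sum_{k=1}^{L-2} r_{L,k}\Big),
\end{align*}
where $r_{L,k} := Z_{L-k,\gb}^{\,\circ}/Z_{L,\gb}^{\,\circ}$.

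For every fixed $k\geq 0$, Proposition~\ref{prop:aux} gives
\[
r_{L,k} \,\underset{L\to\infty}{\sim}\, \Big(\tfrac{L}{L-k}\Big)^{3/4} e^{-\gb k + \tilde\cG(a_\gb)(\sqrt{L-k}-\sqrt{L})} \,\underset{L\to\infty}{\longrightarrow}\, e^{-\gb k},
\]
since $\sqrt{L-k} - \sqrt{L} = -k/(\sqrt{L-k}+\sqrt{L}) \to 0$ and $(L/(L-k))^{3/4}\to 1$. If I can interchange limit and sum, the resulting geometric series yield
\[
\bar K_\beta \,=\, \frac{K_\beta^{\,\circ}}{1-e^{-\gb}}, \qquad \hat K_\beta \,=\, K_\beta^{\,\circ}\Big(\tfrac{1}{2} + \tfrac{e^{-\gb}}{1-e^{-\gb}}\Big) \,=\, \frac{K_\beta^{\,\circ}(1+e^{-\gb})}{2(1-e^{-\gb})},
\]
which also reconciles nicely with the factor $(1+e^{-\gb})$ appearing in \eqref{defKdeta}.

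The main technical step is to produce a summable envelope $r_{L,k} \leq \phi_k$, uniform in $L$, so that dominated convergence applies. I would split the sum at $k = L/2$. For $k \leq L/2$, Proposition~\ref{prop:aux} gives a constant $C>0$ with $Z_{j,\gb}^{\,\circ} \leq C j^{-3/4}\, e^{\gb j + \tilde\cG(a_\gb)\sqrt{j}}$ for all $j\geq 2$, and a matching lower bound on $Z_{L,\gb}^{\,\circ}$ for $L$ large; using $|\sqrt{L-k}-\sqrt{L}|\leq k/\sqrt{L}$ in this regime, one gets $r_{L,k} \leq C' e^{-\gb k/2}$ once $L$ is large enough. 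For $k > L/2$, the numerator $Z_{L-k,\gb}^{\,\circ}$ is at most $e^{\gb(L-k)}$ times a sub-exponential factor, while the denominator is bounded below by $c\, L^{-3/4} e^{\gb L + \tilde\cG(a_\gb)\sqrt{L}}$, so $r_{L,k}$ is exponentially small in $L$ and contributes $o(1)$ to the total sum. Both pieces together give the bound needed to apply dominated convergence. This envelope construction is the one delicate part; everything else is bookkeeping.
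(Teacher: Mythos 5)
Your proof is correct and follows essentially the same route as the paper: rewrite $\hat Z^{\,\circ}_{L,\gb}$ and $\bar Z^{\,\circ}_{L,\gb}$ as a sum over $k$ of terms involving $Z^{\,\circ}_{L-k,\gb}$, take the termwise limit via Proposition~\ref{prop:aux}, and justify the interchange by dominated convergence; the resulting geometric series give the same constants $\hat K_\beta$ and $\bar K_\beta$ as the paper's \eqref{compkb}. The only difference is cosmetic (you normalize by $Z^{\,\circ}_{L,\gb}$ rather than by $e^{\gb L}h(L)$) and that you spell out the summable envelope $r_{L,k}\leq C'e^{-\gb k/2}$ for $k\leq L/2$ and the $o(1)$ tail for $k>L/2$, which the paper leaves implicit in the phrase ``dominated convergence.''
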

We observe that \eqref{eq:4.22} is a substantial improvement of \cite[Prop. 4.2]{CNP16}, where the polynomial factors were $\frac{1}{L^{\kappa}}$ with $\kappa>1$ in the lower bound, and $\frac{1}{\sqrt{L}}$ in the upper bound. Moreover Corollary~\ref{prop:4.2} is a straightforward consequence of Proposition~\ref{prop:aux}. Indeed, let us define for convenience
\begin{equation}\label{def:h}
h(n):=\frac{1}{n^{3/4}}e^{\tilde \cG(a_\gb) \sqrt{n}}, \quad n\in \N\,.
\end{equation}
Recollecting \eqref{def:ZLcirc}, we write
\[\frac{e^{-\gb L}}{h(L)} \hat Z_{L,\gb}^{\, \circ} \,=\, \frac12\frac{e^{-\gb L}}{h(L)} Z_{L,\gb}^{\, \circ} + \sum_{k=1}^{L-2} e^{-\gb k} \frac{e^{-\gb (L-k)}}{h(L)} Z_{L-k,\gb}^{\, \circ}\,,\]
and similarly for \eqref{def:barZLcirc}. Noticing that $h(L)\sim h(L-k)$ as $L\to\infty$ for any $k\in\N$, \eqref{eq:4.22} and dominated convergence imply that \eqref{eq:4.2} holds true with 
\begin{equation}\label{compkb}
\hat K_\beta=K_\beta^{\, \circ}\,  \frac{1+e^{-\beta}}{2 (1-e^{-\beta})} \quad \text{and} \quad \bar K_\beta=K_\beta^{\, \circ}\,  \frac{1}{1-e^{-\beta}}.
\end{equation}

\begin{proof}[Proof of Theorem~\ref{Th:4.1} subject to Proposition~\ref{prop:aux}]
Recall the definitions of $\delta_1(\beta)$ and $\delta_2(\beta)$ in \eqref{exprdelta}, and define two probability laws $q_1$ and $q_2$ on 
$\N$ by 
\begin{align}
q_1(t)&:= \delta_1(\beta)^{-1} \, \bar Z_{t,\gb}^{\, \circ} \,  e^{-\beta t} \quad t\geq 2,\\
q_2(t)&:= \delta_2(\beta)^{-1} \,  \hat Z_{t,\gb}^{\, \circ}\,  e^{-\beta t} \quad t\geq 2.
\end{align}
For $r\in \N$, we denote by $q_2^{r\, *}$ the convolution product of $r$ times $q_2$ and by $q_1*q_2^{r*}$ the convolution product  between $q_1$ and $q_2^{r\, *}$.
This allows us to rewrite \eqref{beaddec} under the form 
\begin{align}\label{beaddec2}
\tilde  Z_{L,\beta}^{\, c}:= e^{-\beta L} Z_{L,\beta}^{\, c}&=
\frac{\delta_1(\beta)}{\delta_2(\beta)}\sum_{r\geq 1} \delta_2(\beta)^r  \, \Big[q_1*q_2^{(r-1)*}\Big](L).
\end{align}
Recalling \eqref{def:h}, Corollary~\ref{prop:4.2} implies that 
\begin{align}\label{equivq12}
\nonumber q_1(n)&\underset{n\to \infty}{\sim} u_\beta\,  h(n), \quad \text{with}\quad u_\beta:=\frac{\bar K_\beta}{\delta_1(\beta)} \\
q_2(n)&\underset{n\to \infty}{\sim} v_\beta\,  h(n), \quad \text{with}\quad v_\beta:=\frac{\hat K_\beta}{\delta_2(\beta)} 
\end{align}

At this stage, Theorem~\ref{Th:4.1} will be a straightforward consequence of Claims~\ref{subexponen} and~\ref{boundconvol} below.
Those claims are proven in \cite[Corollary 4.13 and Theorem 4.14]{FKZ11} in the case where $q_1\equiv q_2$.  However, \eqref{equivq12}
guarantees that the proof in \cite{FKZ11} can easily be adapted to our case since $q_1$ and $q_2$ enjoy the same asymptotic behavior (up to a constant).

\begin{claim}\label{subexponen}
For $\beta>0$ and $r\in\N\cup\{0\}$ it holds that $ q_1*q_2^{r*}(n)\sim_{n\to \infty} (u_\beta+r\,  v_\beta)\,  h(n)$.
\end{claim}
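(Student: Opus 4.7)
The plan is a proof by induction on $r \geq 0$. The base case $r = 0$ is immediate, since by convention $q_1 * q_2^{0*}(n) = q_1(n) \sim u_\gb h(n)$ by \eqref{equivq12}.

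For the inductive step, assume $(q_1 * q_2^{(r-1)*})(n) \sim (u_\gb + (r-1)v_\gb) h(n)$ and write
\[
(q_1 * q_2^{r*})(n) \,=\, \sum_{k=0}^{n} (q_1 * q_2^{(r-1)*})(k)\, q_2(n-k).
\]
Fix a cutoff $K$ with $K < n/2$ and split the summation into three pieces $\cI_1 := \{k \leq K\}$, $\cI_2 := \{K < k < n-K\}$, and $\cI_3 := \{k \geq n-K\}$; the idea is to let $n \to \infty$ for fixed $K$, then send $K \to \infty$.

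Before treating the three regions, I would establish two structural properties of $h$. First, \emph{long-tailedness}: $h(n-k)/h(n) \to 1$ as $n \to \infty$ for every fixed $k$, which follows from the expansion $\sqrt{n-k} = \sqrt{n} - \tfrac{k}{2\sqrt{n}} + O(n^{-3/2})$ and the polynomial prefactor. Second, a \emph{subexponential bound}: because $q_1$ is a probability kernel with $q_1(n) \sim u_\gb h(n)$, one has $\sum_n h(n) < \infty$, forcing $\tilde \cG(a_\gb) < 0$. Using the inequality $\sqrt{k} + \sqrt{n-k} - \sqrt{n} \geq \tfrac12 \sqrt{k \wedge (n-k)}$ valid on $\cI_2$, one obtains
\[
\frac{1}{h(n)}\sum_{k \in \cI_2} h(k)\, h(n-k) \,\leq\, 2 \sum_{k \geq K} \frac{\cst}{k^{3/4}} \exp\!\bigl(\tfrac12 \tilde \cG(a_\gb)\sqrt{k}\bigr),
\]
whose right-hand side is independent of $n$ and tends to $0$ as $K \to \infty$ since $\tilde\cG(a_\gb)<0$.

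Once (i) and (ii) are in hand, the three regions are handled as follows. On $\cI_1$, property (i) together with \eqref{equivq12} gives $q_2(n-k) \sim v_\gb h(n)$ for every $k \leq K$, hence
\[
\sum_{k \in \cI_1} (q_1 * q_2^{(r-1)*})(k)\, q_2(n-k) \,\sim\, v_\gb h(n) \sum_{k \leq K} (q_1 * q_2^{(r-1)*})(k),
\]
which, as $K \to \infty$, tends to $v_\gb h(n)$ because $q_1 * q_2^{(r-1)*}$ is a probability kernel. Symmetrically, the induction hypothesis and (i) yield that the $\cI_3$ contribution is $(u_\gb + (r-1) v_\gb) h(n)(1 + o_K(1))$, while (ii) shows the $\cI_2$ contribution is $o(h(n))$ uniformly. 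Summing and sending $K \to \infty$ yields the asymptotic with coefficient $u_\gb + r v_\gb$.

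The main obstacle is establishing the uniform subexponential bound (ii); once it is in place, the three-region splitting is textbook and essentially copies the argument of \cite[Cor. 4.13, Thm. 4.14]{FKZ11}, the only adjustment being that $q_1$ and $q_2$ carry a priori different leading constants, which is precisely what produces the additive structure $u_\gb + r v_\gb$ rather than the multiplicative $r\, v_\gb$ one would get from convoluting $r$ copies of a single density.
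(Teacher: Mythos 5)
Your proof is correct and is essentially the standard subexponential-density convolution argument (induction on $r$, three-region splitting, long-tailedness of $h$, and a Kesten-type uniform tail bound), which is precisely what the paper delegates to \cite[Cor.~4.13, Thm.~4.14]{FKZ11} with the remark that it "can easily be adapted" to $q_1\neq q_2$. The only point you leave implicit is that, before applying the domination in region $\cI_2$, one should note $(q_1*q_2^{(r-1)*})(k)\leq C_r\,h(k)$ and $q_2(j)\leq C\,h(j)$ uniformly in $k,j\geq 2$ (with $C_r$ allowed to grow with $r$, since Claim~\ref{subexponen} is for fixed $r$), but this follows immediately from the induction hypothesis together with \eqref{equivq12} and boundedness away from zero of $h$ on finite initial segments.
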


\begin{claim}\label{boundconvol}
For $\beta>0$ and $\gep>0$ there exists $n_0(\gep)\in \N$ and $C(\gep)>0$ such that 
\begin{equation}\label{boundunifconv}
q_1*q_2^{r*}(n)\leq C(\gep)\,  (1+\gep)^r\,  h(n), \quad n\geq n_0(\gep), r\in \N\cup\{0\}
\end{equation} 
\end{claim}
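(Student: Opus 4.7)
The plan is to prove Claim~\ref{boundconvol} by induction on $r$, adapting the Kesten-type argument for subexponential sequences from \cite[Theorem 4.14]{FKZ11}. The base case $r=0$ is immediate from $q_1(n)\sim u_\beta h(n)$ in \eqref{equivq12}, giving $q_1(n)\leq (u_\beta+1)h(n)$ for $n$ large. Throughout I would exploit the explicit form $h(n)=n^{-3/4}e^{\tilde\cG(a_\beta)\sqrt{n}}$, which yields the long-tail property $h(n-k)/h(n)\to 1$ as $n\to\infty$ for each fixed $k$; moreover, since $q_2(n)\sim v_\beta h(n)$ and $q_2$ is a probability distribution, $h$ must be summable (which forces $\tilde\cG(a_\beta)<0$).

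For the inductive step, assume the bound holds at rank $r-1$ with constant $C(\gep)$, write
\[q_1*q_2^{r*}(n) = \sum_{k=2}^{n-2} q_2(k)\bigl[q_1*q_2^{(r-1)*}\bigr](n-k)\,,\]
and split the sum at a threshold $A=A(\gep)$ into three regions. On the left end $k\in[2,A]$, the inductive hypothesis combined with $h(n-k)/h(n)\to 1$ gives a contribution bounded by $C(\gep)(1+\gep)^{r-1}(1+o(1))\sum_{k\leq A}q_2(k)\leq C(\gep)(1+\gep)^{r-1}(1+o(1))$. On the right end $k\in[n-A,n-2]$, reindexing $j=n-k\in[2,A]$, the asymptotic $q_2(n-j)\sim v_\beta h(n)$ holds uniformly in $j\leq A$, and since $\sum_{j=2}^A q_1*q_2^{(r-1)*}(j)\leq 1$, this region contributes at most $v_\beta h(n)(1+o(1))$---an additive term independent of both $r$ and $A$. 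For the middle region $k\in[A,n-A]$, applying the inductive hypothesis reduces the task to controlling $\sum_{A\leq k\leq n-A}q_2(k)h(n-k)/h(n)$.

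The crucial ingredient is the subexponential middle-region estimate
\[\limsup_{A\to\infty}\;\limsup_{n\to\infty}\;\sum_{k=A}^{n-A}q_2(k)\frac{h(n-k)}{h(n)}=0\,,\]
which I would derive from Claim~\ref{subexponen}: the same subexponential reasoning yields $q_2*h(n)/h(n)\to 1+v_\beta\sum_k h(k)$, while the two corner sums $\sum_{k\leq A}q_2(k)h(n-k)/h(n)$ and $\sum_{k\geq n-A}q_2(k)h(n-k)/h(n)$ converge (as $n\to\infty$ then $A\to\infty$) to $1$ and $v_\beta\sum_k h(k)$ respectively, matching the full limit and forcing the middle to vanish. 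Choosing $A$ large enough so that the middle contribution is at most $(\gep/2)\,C(\gep)(1+\gep)^{r-1}h(n)$, the three pieces combine to
\[q_1*q_2^{r*}(n)/h(n)\leq C(\gep)(1+\gep)^{r-1}\bigl(1+\tfrac{\gep}{2}+o(1)\bigr)+v_\beta(1+o(1))\,,\]
which is $\leq C(\gep)(1+\gep)^r$ for $n\geq n_0(\gep)$ provided $C(\gep)$ is fixed once and for all so that $C(\gep)\gep/4\geq v_\beta$ and $C(\gep)\geq u_\beta+1$. The principal obstacle is ensuring that $A$, $C(\gep)$, and $n_0(\gep)$ can be chosen uniformly in $r$---the middle-region estimate must be sharp enough that its contribution does not amplify the inductive constant beyond the geometric rate $(1+\gep)^r$.
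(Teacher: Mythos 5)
Your argument follows the standard Kesten-bound route that the paper attributes to \cite[Theorem 4.14]{FKZ11}, and the skeleton is correct: induction on $r$, a three-way split of the convolution sum at a threshold $A$, multiplicative control of the left and middle regions via the inductive hypothesis, additive control of the right region via $\sum_j q_1*q_2^{(r-1)*}(j)\leq 1$, and the middle-vanishing estimate obtained by comparing corner contributions with the known full limit. So you and the paper take essentially the same route.

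There is, however, a genuine gap in the bookkeeping that you flag but do not close. In the left region $k\in[2,A]$ you apply the inductive hypothesis at the argument $n-k$, which requires $n-k\geq n_0(\gep)$, hence $n\geq n_0(\gep)+A$; the middle estimate likewise needs $n$ large depending on $A$. Your final inequality is therefore established only for $n\geq n_0(\gep)+A$, not for $n\geq n_0(\gep)$, and iterating this naively lets the effective threshold drift upward with $r$. The fix is standard but must be stated: first fix $A=A(\gep)$, then fix $n_0$ large enough (depending on $A$ and $\gep$) so that the $o(1)$ errors and the middle-region bound are controlled for all $n\geq n_0$. The induction step then gives the bound for $n\geq n_0+A$, and for the finitely many $n\in[n_0,n_0+A)$ one uses the trivial bound $q_1*q_2^{r*}(n)\leq 1\leq h(n)\big/\min_{m\in[n_0,n_0+A)}h(m)$, absorbed by imposing $C(\gep)\geq\big(\min_{m\in[n_0,n_0+A)}h(m)\big)^{-1}$ in addition to your constraints $C(\gep)\geq u_\beta+1$ and $C(\gep)\gep/4\geq v_\beta$. (Equivalently, set $\alpha_r:=\sup_{n\geq n_0}q_1*q_2^{r*}(n)/h(n)$, obtain the recursion $\alpha_r\leq(1+\gep)\alpha_{r-1}+c$ with $c$ independent of $r$, and solve it.) A smaller point: the asymptotic $q_2*h(n)/h(n)\to 1+v_\beta\sum_k h(k)$ that you invoke is not literally Claim~\ref{subexponen}; it is cleaner to extract the middle-vanishing from the $r=1$ case of Claim~\ref{subexponen} together with the lower bound $q_1(m)\geq\tfrac{u_\beta}{2}h(m)$ for $m$ large.
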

It remains to use the dominated convergence Theorem to conclude from Claims~\ref{subexponen} and~\ref{boundconvol} that for $\delta<1$
\begin{equation}\label{limitconv}
\lim_{n\to \infty} \frac{1}{h(n)} \sum_{r\geq 1} \delta^r\, \Big[q_1*q_2^{(r-1)*}\Big](n)=u_\beta \sum_{r\geq 1} \delta^r+ \delta\,  v_\beta \sum_{r\geq 1} r\,  \delta^r=\frac{u_\beta\, \delta}{1-\delta}+\, \frac{v_\beta\, \delta^2}{(1-\delta)^2}
\end{equation}
Combining \eqref{beaddec2} with \eqref{limitconv} at $\delta=\delta_2(\beta)$ (recall that $\delta_2(\beta)<1$ by Corollary \ref{controlbeta2}) we obtain that 
\begin{equation}\label{convpartfunc}
\lim_{L\to \infty} \frac{1}{h(L)} \tilde  Z_{L,\beta}^{\, c}=\frac{\bar K_\beta}{1-\delta_2(\beta)}+\frac{\hat K_\beta\,  \delta_1(\beta)}{({1-\delta_2(\beta))^2}}.
\end{equation}

To complete the proof of Theorem~\ref{Th:4.1}, it remains to take into account trajectories that end with some zero-length stretches. 
To that aim, we recall \eqref{def:omegalc} and  we partition $\Omega_L$ into subsets whose trajectories are ending with a prescribed number of zero-length stretches, i.e., 
 \begin{align}\label{trajend0}
 \Omega_L&=\cup_{k=0}^L\Big\{   \ell\in \Omega_L\colon N_\ell\geq k,  (\ell_{i})_{i=1}^{N_\ell-k}\in \Omega_{L-k}^{\,c}, \, \ell_{N_\ell-k+1}=\ell_{N_\ell-k+2}=\dots=\ell_{N_\ell}=0\Big\}.
 \end{align}
 By using this decomposition, we obtain that 
 \begin{equation}
 \tilde Z_{L,\beta}:=Z_{L,\beta} \, e^{-\beta L}=\sum_{k=0}^L e^{-\beta k} \tilde Z_{L-k,\beta}^{\,c}
 \end{equation}
 and then, using \eqref{convpartfunc} and dominated convergence we conclude that  
 \begin{equation}\label{convpartfunc2}
K_\beta:=\lim_{L\to \infty} \frac{1}{h(L)} \tilde  Z_{L,\beta}=\frac{1}{1-e^{-\beta}}\, \Big[ \frac{\bar K_\beta}{1-\delta_2(\beta)}+\frac{\hat K_\beta\,  \delta_1(\beta)}{({1-\delta_2(\beta))^2}}\Big],
\end{equation}
which completes the proof of Theorem~\ref{Th:4.1}, by combining \eqref{convpartfunc2} with \eqref{compkb} and \eqref{exprdelta}, and by writing
\be\label{valuekb}
K_\beta\,=\,K_\beta^{\, \circ} \,e^{-\gb}\, \big[(1+e^{-\beta})\,  e^{\mathrm{arccosh}(e^{-\beta/2}\cosh(\beta))}-e^{\beta/2} (1-e^{-\beta})\big]^{-2}\,,
\ee
where $K_\beta^{\, \circ}$ is computed below in \eqref{valkbo}.
\end{proof}

\begin{proof}[Proof of Proposition \ref{prop:aux}]
Let us now prove  Proposition~\ref{prop:aux} subject to Proposition~\ref{lemunif} and Lemma~\ref{rest-ext} below. 
The proof of Proposition~\ref{lemunif} is postponed to section \ref{pr:lemunif}, whereas Lemma~\ref{rest-ext} was already stated and proven in \cite[Lemma 4.4]{CNP16}
so we will not repeat the proof in the present paper. 
\smallskip

For $\beta>0$ and $q>0$ recall the definitions of $\tpsi(q)$ and $C_{\beta,q}$ from \eqref{defF} and \eqref{defcbq}.
\begin{proposition}\label{lemunif}
Let $[q_1,q_2]\subset (0,\infty)$ and $N\in \N$. We have that for $N\in \N$ such that 
$qN^2\in \N$,
\begin{equation}\label{lemunif2}
\Pb_{\gb}(\mathcal{V}_{N,\, qN^2} )\,=\, \frac{C_{\beta,q}}{N^2} \ e^{-N\tpsi(q)}  (1+o(1)) \,,
\end{equation}
uniformly in $q\in [q_1,q_2]$, with 
\begin{equation}\label{def:nu}
\mathcal{V}_{n,k}:=\{X\colon X_n=0,\,  A_n=k, X_i>0, 0<i<n\}, \quad (n,k)\in (\N_0)^2. 
\end{equation}

\begin{remark}\label{linkwithWP}
For the proof of Proposition \ref{lemunif} (see Section \ref{pr:lemunif}), we took inspiration from 
\cite{PerfWach19} where a slightly different problem is considered. To be more specific, the authors consider a random walk
$(Y_i)_{i\geq 0}$ with a negative drift and a light tail such that  the moment generating function $\phi(t):=E(e^{t\, Y_1})$
 satisfies that there exists a $\lambda>0$ such that $\phi(\lambda)=1$, $\phi^{'}(\lambda)<\infty$ and $\phi^{''}(\lambda)<\infty$.   
For such a walk, they provide the assymptotics of the joint law of $\tau:=\inf\{i\geq 1\colon \, Y_i\leq 0\}$ and of $A_{\tau-1}$
as the latter becomes large. When applied in our framework, \cite[Theorem 1 and 2]{PerfWach19} prove that 
for $p>0$ and $q>p/2$ there exist  $C_1, C_2>0$ such that 
$$\Pb_{\gb}(X_N\leq pN, A_N=q N^2, \, X_i>pi,\,  0< i<N)\underset{N\to \infty} {\sim}\frac{C_1}{N^2} e^{- C_2 N}.$$
The case $p=0$, which is the object of Proposition \ref{lemunif} is not covered by \cite{PerfWach19} though, which is why
we provide a complete proof in Section \ref{pr:lemunif}.
\end{remark}

\end{proposition}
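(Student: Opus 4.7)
The plan is to reduce Proposition~\ref{lemunif} to a local limit theorem via an exponential change of measure tailored to make the event $\{X_N=0,\, A_N=qN^2\}$ typical. Writing $Y_j := X_j - X_{j-1}$ so that $A_N = \sum_{j=1}^N (N-j+1)Y_j$, the natural tilt is time-inhomogeneous: I replace the law of $Y_j$ by
\begin{equation*}
\tilde{\Pb}^{(N)}(Y_j = y) := e^{c_j y - \cL(c_j)}\, \Pb_\beta(Y_j=y), \qquad c_j := \tilde h^q\bigl(\tfrac12 - \tfrac{j}{N}\bigr).
\end{equation*}
Under this tilt, the expected profile of $X_{\lfloor tN\rfloor}/N$ converges to $F(t) := \int_0^t \cL'\bigl(\tilde h^q(\tfrac12-s)\bigr)ds$, which by oddness of $\cL'$ vanishes at $t=0,1$, and by the defining equation $\cG'(\tilde h^q)=q$ integrates to $q$. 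Consequently $\tilde{\bE}^{(N)}[X_N] = O(1)$ and $\tilde{\bE}^{(N)}[A_N] = qN^2 + O(N)$, confirming that the target event is indeed typical.

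Abel summation applied to the log-likelihood ratio gives the clean identity $\sum_j c_j Y_j = c_N X_N + (\tilde h^q/N) A_{N-1}$, which on $\mathcal{V}_{N,qN^2}$ simplifies to the deterministic value $\tilde h^q q N$. Combining this with the Riemann-sum estimate $\sum_{j=1}^N \cL(c_j) = N\cG(\tilde h^q) + O(1)$ (obtained by Euler--Maclaurin on the smooth integrand $\cL(\tilde h^q(\tfrac12-\cdot))$, which is even around $\tfrac12$) yields
\begin{equation*}
\Pb_\beta(\mathcal{V}_{N,qN^2}) = e^{-N\tpsi(q)+O(1)}\, \tilde{\Pb}^{(N)}(\mathcal{V}_{N,qN^2}),
\end{equation*}
so it remains to show $\tilde{\Pb}^{(N)}(\mathcal{V}_{N,qN^2}) \sim C_{\beta,q}/N^2$. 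Dropping momentarily the positivity constraint, a two-dimensional local CLT for the pair $(X_N, A_N - qN^2)$ at its natural scales $(\sqrt N, N^{3/2})$ delivers the prefactor $1/(2\pi N^2 \vartheta(\tilde h^q)^{1/2})$, where $\vartheta(\tilde h^q)$ is the limiting covariance determinant defined in \eqref{defvartheta}. This local CLT is proven by Fourier inversion of the (explicit, analytic) tilted characteristic function, splitting the integration domain into a central Gaussian region and an exponentially-decaying peripheral region; uniformity in $q\in[q_1,q_2]$ comes from the continuity of $\tilde h^q$ granted by Lemma~\ref{diffeotildeG}.

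The main obstacle is incorporating the positivity constraint $\{X_i>0, \,0<i<N\}$ and extracting the factor $\kappa(\tilde h^q/2)^2$. The heuristic is clear: under the tilt the rescaled walk concentrates on the deterministic profile $NF(\cdot/N)$ which is strictly positive on $(0,1)$, so positivity is automatic in the bulk and can only be binding inside two boundary windows of length $o(N)$. Near $j=0$ the increment drift $\cL'(c_1)$ tends to $\cL'(\tilde h^q/2)>0$, and the probability of never returning to $\{\leq 0\}$ after the first excursion converges to the escape probability $\kappa(\tilde h^q/2)$ defined in \eqref{defKB}; the time-reversal symmetry $c_{N+1-j} = -c_j$ combined with the symmetry of $\Pb_\beta$ produces an identical contribution at the right endpoint, giving the product $\kappa(\tilde h^q/2)^2$. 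The delicate step is to make this rigorous: one must decouple the boundary windows from the bulk (for instance via an $h$-transform / Doob-conditioning argument), compare the finite-$N$ survival probabilities with the limiting drifted-walk escape probability, and combine this factorisation with the joint local CLT of Step~2 uniformly in $q\in[q_1,q_2]$.
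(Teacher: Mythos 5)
Your proposal identifies the same overall strategy as the paper: an exponential tilt of the increments making $\{X_N=0,\, A_N=qN^2\}$ typical, a bivariate local CLT for $(X_N,A_N)$ under the tilt to produce the factor $1/(2\pi N^2\vartheta(\tilde h^q)^{1/2})$, and an analysis of two boundary windows to extract the factor $\kappa(\tilde h^q/2)^2$. The Abel-summation identity, the positivity of the deterministic profile $F$ on $(0,1)$, and the heuristic that the positivity constraint only binds in $o(N)$-size boundary windows are all correct and all appear in the paper. However, the proposal stops precisely where the real work begins: you explicitly defer the "delicate step" of decoupling the boundary windows from the bulk and rigorously extracting the $\kappa^2$ factor. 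This is the content of the paper's Steps~2--4 (Sections~\ref{Step2}--\ref{Step4}): the decomposition at times $a_N=(\log N)^2$ and $N-a_N$ into a main term $M_{N,q}$ (with $X_{a_N},A_{a_N},X_{N-a_N},A_N-A_{N-a_N-1}$ constrained to windows $\cC_N,\cD_N$) and an error term $E_{N,q}$; the bound $E_{N,q}=o(1/N^2)\cdot e^{-N\tpsi(q)}$; and the factorisation of $M_{N,q}$ via Markov property, time-reversal, and the LCLT. None of this is carried out in your outline, and the Doob $h$-transform route you mention as an alternative is itself nontrivial for the time-inhomogeneous killed walk (the $h$-function has no closed form), so the gap is substantive.

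Two further technical points. First, you tilt with the continuous parameter $c_j=\tilde h^q(\tfrac12-\tfrac jN)$, whereas the paper uses the discrete solution $h_N^q$ of $\cG_N'(h)=q$ (tilting the $j$-th increment with $h_N^q(\tfrac12-\tfrac{2j-1}{2N})$), which exactly centres $(A_{N-1}/N,X_N)$ and, via Proposition~\ref{controlhn}, gives $|\psi_{N,h_N^q}(qN^2,0)-N\tpsi(q)|=O(1/N)$ with no residual constant. Second, and related, you write $\sum_j\cL(c_j)=N\cG(\tilde h^q)+O(1)$ and then conclude $\Pb_\beta(\cV_{N,qN^2})=e^{-N\tpsi(q)+O(1)}\tilde\Pb^{(N)}(\cV_{N,qN^2})$: as stated this leaves an uncontrolled multiplicative constant $e^{O(1)}$ and would spoil the identification of $C_{\beta,q}$. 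In fact the constant vanishes because $\cL$ is even (the Euler--Maclaurin boundary terms at $t=0$ and $t=1$ cancel), so the error is really $O(1/N)=o(1)$ — you gesture at the evenness of the integrand but the bound you write is not sharp enough to support the claimed conclusion.
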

\smallskip

\begin{lemma}\label{rest-ext}
Let $\beta>\beta_c$, there exists $(a_1,a_2)\in (0,\infty)^2$ such that 
\begin{equation}\label{resthext}
\lim_{L\to \infty} \frac{Z_{L,\beta}^{o}([a_1,a_2] \sqrt{L})}{Z_{L,\beta,\delta}^{o,+}}=1\,.
\end{equation}
\end{lemma}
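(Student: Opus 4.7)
The plan is to exploit the probabilistic representation \eqref{ls2}, which writes (up to a factor of $2$ for the sign of $\ell_1$)
\[
e^{-\beta L} Z_{L,\beta}^{\,\circ} = 2 c_\beta \sum_{N=1}^{\lfloor L/2\rfloor} \Gamma_\beta^{\,N}\, \mathbf{P}_\beta\bigl(\mathcal{V}_{N+1,\, L-N}\bigr),
\]
with $\mathcal{V}_{n,k}$ as in \eqref{def:nu}. After scaling $N = s\sqrt{L}$, the ``area density'' $q_N := (L-N)/(N+1)^2$ converges to $s^{-2}$, so Proposition~\ref{lemunif} applied uniformly in $s$ on any compact subset of $(0,\infty)$ yields
\[
\mathbf{P}_\beta\bigl(\mathcal{V}_{N+1,\,L-N}\bigr) = \frac{C_{\beta,q_N}}{(N+1)^2}\, e^{-(N+1)\tpsi(q_N)}(1+o(1)).
\]
Combining this with $\Gamma_\beta^{\,N} = e^{s\sqrt{L}\log\Gamma_\beta}$ and using $\log\Gamma_\beta = \log(c_\beta/e^\beta)$, the $N$-th term in the sum has exponential rate $\exp\bigl(\sqrt{L}\,\tilde\cG(s) + o(\sqrt{L})\bigr)$ with $\tilde\cG$ defined in \eqref{defGF}. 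Thus identifying the dominant range of $N$ reduces to a Laplace-type analysis of $\tilde\cG$.

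By \eqref{defabeta}, $\tilde\cG$ attains its unique maximum on $(0,\infty)$ at $s = a_\beta$. The strategy is then to fix any $\eta>0$ and pick $0 < a_1 < a_\beta < a_2 < \infty$ such that
\[
\sup_{s\in(0,a_1]\cup[a_2,\infty)}\tilde\cG(s)\ \leq\ \tilde\cG(a_\beta)-\eta.
\]
This furnishes an exponential gap $e^{-\eta\sqrt{L}}$ between the contributions to $Z_{L,\beta}^{\,\circ}$ coming from $N\in[a_1\sqrt{L},a_2\sqrt{L}]$ and from its complement within $\{1,\dots,\lfloor L/2\rfloor\}$. Since the denominator in \eqref{resthext} is (up to notational differences) the full single-bead partition function and its logarithm grows like $\tilde\cG(a_\beta)\sqrt{L}$ by Proposition~\ref{prop:aux}, the ratio \eqref{resthext} then tends to $1$.

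The main obstacle is that Proposition~\ref{lemunif} only supplies uniform control for $q$ in a compact $[q_1,q_2]\subset(0,\infty)$, so the tail regimes $N = o(\sqrt{L})$ (where $q_N\to\infty$, i.e.\ a huge area with very few steps) and $N \gg \sqrt{L}$ (where $q_N\to 0$, i.e.\ a long positive excursion with tiny area) must be handled separately. In both regimes, cruder exponential Chebyshev bounds on $\mathbf{P}_\beta(A_N = L-N)$ — tilted via the Laplace increment law — give rate functions that diverge much faster than $\tilde\cG(a_\beta)\sqrt{L}$, so the tail contributions are $o\bigl(e^{\tilde\cG(a_\beta)\sqrt{L}}\bigr)$. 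Combining these tail estimates with the Laplace comparison on $[a_1,a_2]$ concludes the proof of \eqref{resthext}. Regularity of $\tilde\cG$ near $a_\beta$ (ensuring uniqueness and non-degeneracy of the maximum) follows from the smoothness of $\cL$ and the diffeomorphism property of $\cG'$, so the argument only relies on tools already assembled in the paper.
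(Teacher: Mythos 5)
The paper does not actually give a proof of this lemma: it states explicitly, just above the statement, ``Lemma~\ref{rest-ext} was already stated and proven in \cite[Lemma 4.4]{CNP16} so we will not repeat the proof in the present paper.'' So there is no internal proof to compare against, but your outline can still be assessed on its own merits, and there are two substantive concerns.

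\emph{Circularity.} You close the argument by writing that the denominator ``grows like $\tilde\cG(a_\beta)\sqrt{L}$ by Proposition~\ref{prop:aux}.'' But in the paper's logical order, Proposition~\ref{prop:aux} is proven \emph{subject to} Lemma~\ref{rest-ext}; you cannot invoke it here. The fix is small and you already have the ingredients: take the single index $N=\lfloor a_\beta\sqrt{L}\rfloor$, apply Proposition~\ref{lemunif} there (with $q_N\to a_\beta^{-2}$), and you get a direct lower bound $Z_{L,\beta}^{\,\circ}([a_1,a_2]\sqrt{L})\geq c\,L^{-1}e^{\beta L+\tilde\cG(a_\beta)\sqrt{L}(1+o(1))}$ without any reference to Proposition~\ref{prop:aux}. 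Comparing the tails against this lower bound (rather than against an a priori asymptotic for the full $Z_{L,\beta}^{\,\circ}$) avoids the circle.

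\emph{Behavior of $\tilde\cG$ at the boundary and the tail bounds.} Your choice of $a_1,a_2$ requires
$\sup_{s\in(0,a_1]\cup[a_2,\infty)}\tilde\cG(s)\leq\tilde\cG(a_\beta)-\eta$, which is only available if $\tilde\cG(s)\to-\infty$ as $s\to 0^+$ and $s\to\infty$; strict concavity alone does not preclude $\tilde\cG$ approaching its supremum at a boundary. For $s\to\infty$ this is easy since $\log\Gamma_\beta<0$ when $\beta>\beta_c$ and $s\,\tpsi(s^{-2})\to 0$; for $s\to 0^+$ one needs $\tpsi(q)$ to grow at least like $q^{1/2+\epsilon}$, which does hold for Laplace tails (in fact $\tpsi(q)$ grows linearly) but this should be stated, since it is precisely what keeps the $N=o(\sqrt{L})$ regime under control. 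Moreover, Proposition~\ref{lemunif} only covers $q$ in compacts of $(0,\infty)$, so in the two tail regimes you must \emph{not} use it but rather prove directly (e.g.\ by a Chernoff bound on $A_{N}$ with tilting parameter depending on $N$, or by bounding $\Pb_\gb(\cV_{N+1,L-N})\leq\Pb_\gb(A_N\geq L-N)$) that $\Gamma_\beta^N\,\Pb_\gb(\cV_{N+1,L-N})\leq e^{(\tilde\cG(a_\beta)-\eta)\sqrt{L}}$ uniformly over the complement, after which the trivial factor $L/2$ from the number of terms is harmless. These bounds are the actual content of \cite[Lemma 4.4]{CNP16} and are here only sketched; if you intend to present a self-contained proof you need to carry them out.

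With those two points repaired, your outline — probabilistic representation \eqref{ls2}, Laplace asymptotics of $\tilde\cG$ on a compact window, and exponential tail bounds outside it — is the natural route and indeed the one used in \cite{CNP16}.
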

\smallskip

We start the proof of  Proposition~\ref{prop:aux} by recalling \eqref{ls2} and the equality $\frac{c_\beta}{\Gamma_\beta}=e^{\beta}$, which allow us to write for $L\in \N$,
\begin{align}\label{rwrep}
 Z_{L,\beta}^{\, \circ}&=2\,  c_\beta \, e^{\beta L}\,  \tilde Z_{L,\beta}^{\, \circ}:=2 \, e^{\beta (1+L)}\, \sum_{N=2}^{1+L/2}  \Gamma(\beta)^{N}  \, \Pb_{\gb}(\cV_{N,L-N+1})\,.
\end{align}
Lemma \ref{rest-ext} guarantees us that it suffices to consider
 \begin{align}\label{defQL}
Q_{L,\gb}:&=\sum_{N=a_1 \sqrt{L}}^{a_2 \sqrt{L}} \Gamma(\beta)^N  \, \Pb_{\gb}(\cV_{N,L-N+1})\\
\nonumber &=\sum_{x\in[a_1,a_2]\cap\frac{\N}{\sqrt{L}}}  \Gamma(\beta)^{x\sqrt{L}}  \, \Pb_{\gb}\big[\cV_{x\sqrt{L},\,  q_{L}(x) (x \sqrt{L})^2}\big],
\end{align}
with $q_L(x):=\frac{L-x\sqrt{L}+1}{x^2L}$. 
We note that $x\in [a_1,a_2]$ yields that for $L$ large enough, 
$q_L(x)\in [\frac{1}{2 a_2^2},\frac{2}{a_1^2}]$ so that we can apply Proposition~\ref{lemunif} for every $x\in [a_1,a_2]\cap\frac{\N}{\sqrt{L}}$ in the r.h.s. of \eqref{defQL} and obtain 
 \begin{align}\label{QL2}
Q_{L,\gb}&\underset{L\to \infty}{\sim}\sum_{x\in[a_1,a_2]\cap\frac{\N}{\sqrt{L}}}  \frac{C_{\beta,q_L(x)}}{x^2\, L}
e^{x\sqrt{L} \, [\log \Gamma(\beta) - \tpsi(q_L(x))]}.
\end{align}
By definition (see \eqref{defF}) $\tpsi$ is  $\cC^2$ on $(0,\infty)$ and therefore,
uniformly in $x\in [a_1,a_2]$ we get
\begin{equation}\label{fq}
\tpsi(q_L(x))=\tpsi(x^{-2})-\tpsi\,'(x^{-2})\, \tfrac{1}{x\sqrt{L}}+O(\tfrac{1}{L})
\end{equation} 
such that \eqref{QL2} becomes 
 \begin{align}\label{QL3}
Q_{L,\gb}&\underset{L\to \infty}{\sim}\, \frac{1}{L} \sum_{x\in[a_1,a_2]\cap\frac{\N}{\sqrt{L}}}  \frac{C_{\beta,q_L(x)}e^{\tpsi\,'(x^{-2})}}{x^2}
e^{\sqrt{L}\,  \tilde \cG(x)},
\end{align}
with $x\in (0,\infty)\mapsto \tilde \cG(x):= x \log \Gamma(\beta)- x\, \tpsi(x^{-2})$ a function already used in \cite[(1.27)]{CNP16}, which is $\cC^2$, negative, has negative second derivative (and therefore is strictly concave on $(0,\infty)$), and reaches its unique maximum at $a_\gb\in[a_1,a_2]$.
\medskip


At this stage we pick $R>0$ and we set $\cT_{R,L}:=[a_\beta -\frac{R}{L^{1/4}},a_\beta+\frac{R}{L^{1/4}}]\cap \frac{\N}{\sqrt{L}}$, and
\begin{align}\label{deftildeQ}
\tilde Q_{L,\beta}^{R,+}:=& \sum_{x\in \cT_{R,L}}   \frac{C_{\beta,q_L(x)} e^{\tpsi\,'(x^{-2})}}{x^2}
e^{\sqrt{L}\,  \tilde \cG(x)}\,,   \\
\nonumber \tilde  Q_{L,\beta}^{R,-}:=&  \sum_{x\in [a_1,a_2]\setminus \cT_{R,L}\cap \frac{\N}{\sqrt{L}}}   \frac{C_{\beta,q_L(x)}e^{\tpsi\,'(x^{-2})}}{x^2}
e^{\sqrt{L}\,  \tilde \cG(x)}\,. 
\end{align}
We recall \eqref{defcbq} and we note that for $\beta>0$, the function $q\in (0,\infty) \mapsto C_{\beta,q}$ is continuous since $x\in (0,\beta/2)\mapsto \kappa(x)$
is continuous (see Lemma \ref{convkappa}) as well as $q\mapsto \tilde h^q$ (see \eqref{def:cLgL} and 
\eqref{def:tildebh}). Moreover $q_L(x)$ converges to $\frac{1}{a_\beta^2}$ uniformly in $x\in \cT_{R,L}$ and therefore
\begin{equation}\label{limite}
\lim_{L\to \infty}  C_{\beta,q_L(x)}=C_{\beta,a_\beta^{-2}} \quad \text{uniformly in $x\in \cT_{R,L}$}
\end{equation}
so that we can rewrite 
\begin{equation}\label{QR+}
\tilde Q_{L,\beta}^{R,+}\underset{L\to \infty}{\sim}\, \frac{ C_{\beta, a_\beta^{-2}}\, \,  e^{\tpsi\,'(a_\beta^{-2})}}{a_\beta^2}
\sum_{n=-R\,L^{1/4}}^{R\, L^{1/4}} e^{\sqrt{L}\,  \tilde \cG\big(a_\beta+ \frac{n}{\sqrt{L}}\big)}   
\end{equation}
where we have changed the summation indices for computational convenience. Note that, in the argument of $\tilde \cG$ in \eqref{QR+},  we should have considered  $\lfloor a_\beta\sqrt{L}\rfloor)/\sqrt{L}$ rather than $a_\beta$   which
may not belong to  $\N/\sqrt{L}$. However, $\tilde \cG$ being $\cC^1$ on $[a_1,a_2]$
the equivalence in \eqref{QR+} remains true.   Since $a_\beta$ is a maximum of $\tilde \cG$ that is $\cC^2$ we can write the following expansion of $\tilde \cG$,
\begin{equation}
 \tilde \cG\big(a_\beta+ \frac{n}{\sqrt{L}}\big)= \tilde \cG(a_\beta)+ \tilde \cG^{''}(a_\beta)\,   \frac{n^2}{2 L}+O\big(\frac{n^3}{L^{3/2}}\big)
\end{equation}
and use it in the last term in  \eqref{QR+} to write
\begin{align}\label{QR+2}
\sum_{n=-R\,L^{1/4}}^{R\, L^{1/4}} e^{\sqrt{L}\,  \tilde \cG(a_\beta+ \frac{n}{\sqrt{L}})}  
\nonumber &\underset{L\to \infty}{\sim}\, e^{\tilde \cG(a_\beta) \sqrt{L}} \sum_{n=-R\,L^{1/4}}^{R\, L^{1/4}} 
e^{\frac{1}{2} \tilde \cG''(a_\beta) (\frac{n}{L^{1/4}})^2}  \\
&\underset{L\to \infty}{\sim}\,  L^{\frac14} \, e^{\tilde \cG(a_\beta) \sqrt{L}} \,  \int_{-R}^{R} e^{\,  \tilde \cG''(a_\beta)\,  \frac{x^2}{2} } dx
\end{align}
where we have used a Riemann sum to obtain the last equality in \eqref{QR+2} since $\tilde \cG''(a_\beta)$
is negative. It remains to combine \eqref{QR+} with \eqref{QR+2} to obtain 
\begin{equation}\label{finQL+}
Q_{L,\beta}^{R,+}\underset{L\to \infty}{\sim}\,  \frac{C_{\beta,\, a_\beta^{-2}}\,  e^{\tpsi\,'(a_\beta^{-2})}}{a_\beta^2}\,  L^{\frac14} \, e^{\tilde \cG(a_\beta) \sqrt{L}} \,  \int_{-R}^{R} e^{\, \tilde \cG''(a_\beta) \,  \frac{x^2}{2}} dx.
 \end{equation}
 
 Let us now consider $Q_{L,\beta}^{R,-}$. To that aim, we bound it from above as 
 \begin{align}\label{boundQL-}
 \tilde  Q_{L,\beta}^{R,-}\leq & M_{\beta,L}\, \sum_{x\in [a_1,a_2]\setminus \cT_{R,L}\cap \frac{\N}{\sqrt{L}}}   e^{\sqrt{L}\,  \tilde \cG(x)}.
 \end{align}
 with
 \begin{equation}\label{defMB}
 M_{\beta,L}:=  \max_{x\in  [a_1,a_2]} \frac{C_{\beta,q_L(x)}e^{\tpsi\,'(\frac{1}{x^2})}}{x^2}.
 \end{equation}
The continuity on $(0,\infty)$ of  both $q\mapsto C_{\beta,q}$ and $\tpsi\,'$ and the fact that, for $L$ large enough,  $q_L(x)\in [1/(2a_2^2), 2/a_1^2]$ for $x\in [a_1,a_2]$,  guarantees us that there exists a $M>0$ such that $M_{\beta,L}\leq M$ for every $L\geq 1$

 Recalling that $\tilde \cG$ is $\cC^2$, strictly concave and reaches its maximum at $a_\beta$ there exists $c>0$ such that
 $\tilde \cG(x)\leq  \tilde \cG(a_\gb) - c\,  (x-a_\gb)^2$ for $x\in [a_1,a_2]$ and therefore the sum in the r.h.s. in \eqref{boundQL-} can be bounded above as 
 \begin{align}\label{boundSum}
 \sum_{x\in [a_1,a_2]\setminus \cT_{R,L}\cap \frac{\N}{\sqrt{L}}}   e^{\sqrt{L}\,  \tilde \cG(x)}&\leq 2 L^{1/4} e^{ \tilde \cG(a_\gb)  \sqrt{L}}
  \frac{1}{L^{1/4}}\sum_{n= R\, L^{1/4}}^{\infty} e^{-c\,  (\frac{n}{L^{1/4}})^2 }
 \end{align}
The function $x\mapsto e^{-c x^2}$ being non increasing on $[0,\infty)$, a standard comparison between sum and integral yields that 
for $L$ large enough and every $R\geq 2$,
 \begin{equation}\label{convRieman}
 \frac{1}{L^{1/4}}\sum_{n= R\, L^{1/4}}^{\infty} e^{-c\,  (\frac{n}{L^{1/4}})^2 }\leq \int_{R-1}^\infty e^{-c\,  x^2} dx.
 \end{equation}
 It remains to use (\ref{boundQL-}--\ref{convRieman}) to conclude that for $L$
 large enough and $R\geq 2$, 
  \begin{align}\label{boundQL-2}
 \tilde  Q_{L,\beta}^{R,-}\leq & 2\, M\,   L^{1/4} e^{ \tilde \cG(a_\gb)  \sqrt{L}} \int_{R-1}^\infty e^{-c\,  x^2} dx.
 \end{align}
 
 We recall that $\int_\R e^{\tilde \cG''(a_\beta) \frac{x^2}{2}} dx=\sqrt{-2\pi/\tilde \cG''(a_\beta)}$. Then, we combine 
 \eqref{QL3} with \eqref{deftildeQ},  \eqref{finQL+} and \eqref{boundQL-2} to claim that 
 \begin{equation}\label{eq:equiv}
 Q_{L,\beta}\underset{L\to \infty}{\sim}\frac{\sqrt{2\pi}\, C_{\beta, a_\beta^{-2}}\   e^{\tpsi\,'(a_\beta^{-2})}}{a_\beta^2\,  \big| \tilde \cG''(a_\beta)\big|^{1/2}} \frac{1}{L^{3/4}}\,   e^{\tilde \cG(a_\beta) \sqrt{L}}\,,
 \end{equation}
 and it suffices to recall \eqref{rwrep} to complete the proof of Proposition \ref{prop:aux} with 
 \begin{equation}\label{valkbo}
 K_\beta^{\, \circ}=2\, e^\beta\,   \frac{\sqrt{2\pi}\, C_{\beta,a_\beta^{-2}}\,  e^{\tpsi\,'(a_\beta^{-2})}}{a_\beta^2\  \big| \tilde \cG''(a_\beta)\big|^{1/2}}\,.
 \end{equation}
\end{proof}

\section{Proof of Proposition \ref{lemunif}}\label{pr:lemunif}

Proposition~\ref{lemunif} is a substantial improvement of \cite[Prop. 2.5]{CNP16}, since this latter proposition only allowed us to bound from below the quantity $\Pb(\cV_{n,qn^2})$ with a polynomial factor $1/n^{\gamma}$ ($\gamma>2$) instead of 
$1/n^{2}$ in the present Lemma.  
Let us first recall some results on the large deviation principle 
satisfied by the sequence of random vectors $(\frac1n A_{n-1}, X_n)_{n\geq 1}$. We then provide an outline of the proof of Proposition~\ref{lemunif}, which is divided into 4 steps, corresponding to Sections \ref{Step1}, \ref{Step2}, \ref{Step3} and \ref{Step4} respectively.

\subsection*{Change of measure}\label{changeof}
Let $X:=(X_i)_{i\in \mathbb{N}}$ be a random walk starting from the origin and whose increments are iid with law $\Pbb$
(recall \eqref{def:Pgb}). For $|h|<\beta/2$, recall the definitions of $\cL(h)$ and $\tilde\Pb_h$ in \eqref{def:cL}, and \eqref{def:Pgd}.
Recall also the definition of $A_N(X)$ in \eqref{airealg}.

Large deviations estimates for the random vector $\Lambda_n:=\big(\frac{1}{n} A_{n-1}(X),X_n\big)$ are displayed in \cite{DH96}. 
Typically, one is interested in the probability of events as
$$\{\tfrac1n \Lambda_n= (q,p)\}\quad \text{with} \quad  (q,p)\in \R^*\times\R,$$
which requires to introduce  tilted probability laws of the form
\begin{equation}\label{def:Pnh}
\frac{\dd \Pb_{n,\bh}}{\dd \Pbbzero} (X) = e^{\bh\cdot\gL_n- \cL_{\gL_n}(\bh)} \quad \text{with}\quad \cL_{\gL_n}(\bh):= \log {\bf E}_{\beta,0}[e^{\bh\cdot\gL_n}].
\end{equation}
where $\bh:=(h_0,h_1)\in \cD_{\gb,n}$
with   
$$\cD_{\gb,n}:=\; \big\{(h_0,h_1)\in\R^2\,;\;|h_1|<\gb/2,\; |(1-\tfrac{1}{n})h_0+h_1|<\gb/2\big\}.$$
For $ (q,p)\in \R^*\times\R$, the fact that $\nabla [\frac1n\cL_{\gL_n}]$ is a $\cC^1$ diffeomorphism from $\cD_{\gb,n}$ to $\R^2$ (see   \cite[Lemma 5.4]{CNP16}),
allows us to choose $\bh:=\bh_n(q,p)$ in \eqref{def:Pnh} 
with $\bh_{n}(q,p)$ the unique solution (in $\bh$) of the equation 
 \begin{equation}\label{def:hnq}
\bE_{n,\bh}\Big[\frac1n\gL_n\Big] = \nabla \Big[\frac1n\cL_{\gL_n}\Big](\bh) = (q,p).
\end{equation}
Under  $\Pb_{n,\bh_n(q,p)}$ the event $\{\tfrac1n \Lambda_n= (q,p)\}$ becomes typical and can be sharply estimated.
\smallskip

In the present context though
we only consider events where the random walk returns to the origin after $n$ steps, i.e., $\{\tfrac1n \Lambda_n= (q,0)\}$. Moreover,  for straightforward symmetry reasons
(stated e.g.  in \cite[Remark 5.5]{CNP16}) we have 
$$\bh_n(q,0)=\big(h_n^q, - h_n^q \, (\tfrac12-\tfrac{1}{2n})\big), \quad q\in \R$$
where $h_n^q$ is the unique solution in $h$ of the equation  $\cG'_n(h)=q$,
with
\begin{align}\label{defGn}
\nonumber \cG_n(h)&:= \frac1n \mathcal{L}_{\Lambda_n}\big[h,-\tfrac{h}{2} (1-\tfrac1n)\big]\quad \text{for}\quad h\in  \big(-\tfrac{n\, \beta}{n-1},\tfrac{n\, \beta}{n-1}\big)\\
&=\tfrac1n \sum_{i=1}^n \cL\big[\tfrac{h}{2} (1-\tfrac{2i-1}{n}) \big].
\end{align}
It is proven below in Lemma \ref{diffeoGN} that $h\mapsto \cG_n'(h)$ is a $\cC^1$ diffeomorphism from $(-\tfrac{n\, \beta}{n-1},\tfrac{n\, \beta}{n-1}\big)$ to $\R$ which 
justifies the existence and uniqueness of $h_n^q$.
As a consequence, instead of those tilted probability laws in \eqref{def:Pnh}, we will rather use the probability laws $ \Pb_{n,h}$  that depend on the sole parameter
$h\in (-\tfrac{n\, \beta}{n-1},\tfrac{n\, \beta}{n-1})$, i.e.,
\begin{equation}\label{def:Pnh1}
\frac{\dd \Pb_{n,h}}{\dd \Pbbzero} (X) = e^{\psi_{n,h}(A_{n-1}, X_n)} \,,
\quad \text{with} \quad
\psi_{n,h}(a,x) := h\,\frac{a}{n}-\frac{h}{2} \Big(1-\frac{1}{n}\Big) x -n\,  \cG_n(h)\,,\quad x,a\in\bbZ\,.
\end{equation}
We note that, under $ \Pb_{n,h}$,   
the increments $(X_{i}-X_{i-1})_{i=1}^{n}$ are independent  and $X_{i}-X_{i-1}$ follows the law $\tilde\Pb$ with parameter $\frac{h}{2} (1-\tfrac{2i-1}{n}\big)$ (recall \eqref{def:Pgd}).  

It remains to define the continuous counterpart of $\cL_{\Lambda_n}$.
\begin{equation}\label{def:cDgb}
\cD_\gb \;:=\; \Big\{(h_0,h_1)\in\R^2\,;\;|h_1|<\gb/2,\; |h_0+h_1|<\gb/2\Big\},
\end{equation}
and for every $\bh=(h_0,h_1)\in\cD_\gb$, 
\begin{equation}\label{def:cLgL}
\cL_\gL(\bh) := \int_0^1 \cL(h_0x+h_1)\dd x.
\end{equation}
As stated in \cite[Lemma 5.3]{CNP16}, $\nabla\cL_\gL(\bh)$ that can be written as 
\begin{equation}
\begin{aligned}\label{def:tildebh}
\nabla\cL_\gL(\bh) \;&=\; (\partial_{h_0} \cL_\gL, \partial_{h_1} \cL_\gL)(\bh)\\
&=\; \Big(\int_0^1 x\cL'(xh_0+h_1)\dd x, \int_0^1 \cL'(x h_0+h_1)\dd x\Big),
\end{aligned}
\end{equation}
is a $\cC^1$ diffeomorphism from $\cD_\gb$ to $\R^2$. Thus, for $(q,p)\in \R^2$ we  let $\tilde\bh(q,p)$ 
be the unique solution in $\bh\in\cD_\gb$ of the equation $\nabla\cL_\gL(\bh)=(q,p)$. 
As mentioned above for the discrete case,  since we will only consider the case $p=0$, the fact that $\cL'$ is an odd function combined with \eqref{def:tildebh} ensures us that 
\begin{equation}\label{hrt}
\tilde\bh(q,0)=\Big(\tilde h^q, - \frac{ \tilde h^q}{2} \Big), \quad q\in \R
\end{equation}
where $\tilde h^q$ is the unique solution in $h$ of the equation $ \cG'(h)=q$, with
\begin{align}\label{deftiG}
\nonumber  \cG(h)&:=  \mathcal{L}_{\Lambda}\big(h,-\tfrac{h}{2} \big)\quad \text{for}\quad h\in (-\beta,\beta)\\
&= \int_0^1 \cL(h(\tfrac12-x))\dd x.
\end{align}
With Lemma \ref{diffeotildeG} below,  we prove that  $h\mapsto  \cG'(h)$ is a $\cC^1$ diffeomorphism from $(- \beta,\, \beta)$ to $\R$. This 
justifies the existence and uniqueness of $\tilde h^q$.

\bigskip

\noindent {\bf Outline of the proof of Proposition \ref{lemunif}}
With Step 1 below, we bound from above the difference between the finite size exponential decay rate of 
 $\Pb_{\gb}(\cV_{n,q n^2})$ and its limit as $n$ tends to $\infty$. In Step 2, we divide $\Pb_{\gb}(\cV_{N,qN^2})$
into a main term $M_{N,q}$ and an error term $E_{N,q}$. The main term is obtained by adding to the definition of $\cV_{N,qN^2}$ some constraints concerning the possible values taken by $X_{a_N}, A_{a_N}, X_{N-a_N}$ and $A_{N-a_N-1}$ for an ad-hoc sequence  $(a_N)_{N\geq 1}$ satisfying both $a_N=o(N)$ and $\lim_{n\to \infty}a_N=\infty$.  The $E_{N,q}$ term is bounded above in Step 3, while in Step 4 we provide a sharp estimate of $M_{N,q}$, and we conclude the proof in Step 4
 by computing the pre-factors in the estimate of $\Pb_{\gb}(\cV_{N,qN^2})$.



\subsection{Step 1}\label{Step1}
The aim of this step is to prove the following Proposition, which is a strong improvement of \cite[Proposition 2.3]{CNP16}
since we bound from above the gap between discrete quantities and their continuous counterparts  by $n^{-2}$ instead of $n^{-1}$.
As mentioned in Remark \ref{linkwithWP} above, our proof is close in spirit to that of \cite[Theorem 2]{PerfWach19}, in particular for Proposition \ref{controlderiv} below. Recall the definition of $\tpsi$ from~\eqref{defF}.
\begin{proposition}\label{controlhn}
For $[q_1,q_2]\subset (0,\infty)$, there exists a $C>0$ and an $n_0\in \N$ such that 
for every $n\geq n_0$ and $q\in [q_1,q_2]\cap\frac{\N}{n^2}$
\begin{equation}\label{controllhn}
\left|\frac1n \psi_{n,h_n^q}(qn^2,0)- \tpsi(q)\right|\leq \frac{C}{n^2},
%
\end{equation}
and
\begin{equation}\label{hn}
\big|h_n^q-\tilde h^q\big|
\leq  \frac{C}{n^2}.
\end{equation}
\end{proposition}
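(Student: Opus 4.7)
The plan is to exploit the fact that $\cG_n(h)$ is \emph{exactly} the composite midpoint-rule quadrature for $\cG(h)$. Indeed, setting $f_h(x):=\cL(h(\tfrac12-x))$, the point $\tfrac{2i-1}{2n}$ is the midpoint of the subinterval $[(i-1)/n,i/n]$, and a direct comparison of \eqref{defGn} with \eqref{deftiG:1} gives
\begin{equation*}
\cG_n(h)=\frac1n\sum_{i=1}^n f_h\bigl(\tfrac{2i-1}{2n}\bigr),\qquad \cG(h)=\int_0^1 f_h(x)\,\dd x.
\end{equation*}
Differentiating under the sum and the integral in the $h$ variable preserves this identity, so $\cG_n^{(k)}$ is the midpoint approximation of $\cG^{(k)}$ for each $k\geq 0$. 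The composite midpoint rule for a $\cC^2$ integrand has error $\le\tfrac{1}{24 n^2}\|f''\|_\infty$, and since $\cL$ is analytic on $(-\tfrac{\beta}{2},\tfrac{\beta}{2})$, all derivatives of $f_h(x)$ are bounded uniformly in $(x,h)\in[0,1]\times[-\beta+\eta,\beta-\eta]$, for any fixed $\eta>0$. This yields, for $k=0,1,2$,
\begin{equation*}
\sup_{|h|\le\beta-\eta}\bigl|\cG_n^{(k)}(h)-\cG^{(k)}(h)\bigr|\ \le\ \frac{C_k}{n^2}.
\end{equation*}

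Next I fix $\eta>0$ small enough that the compact set $\{\tilde h^q:q\in[q_1,q_2]\}$ lies in $(-\beta+2\eta,\beta-2\eta)$; this is possible since $\cG'$ is a $\cC^1$ diffeomorphism by Lemma~\ref{diffeotildeG}. The uniform convergence $\cG_n'\to\cG'$ combined with the uniform lower bound $\cG''(h)\ge c>0$ on the window (which follows from $\cL''>0$ and $\cG''(h)=\int_0^1(\tfrac12-x)^2\cL''(h(\tfrac12-x))\dd x$) forces $h_n^q$ into the window for $n$ large, uniformly in $q\in[q_1,q_2]$. Then the mean value theorem gives
\begin{equation*}
c\,|h_n^q-\tilde h^q|\ \le\ |\cG'(h_n^q)-\cG'(\tilde h^q)|\ =\ |\cG'(h_n^q)-\cG_n'(h_n^q)|\ \le\ \frac{C_1}{n^2},
\end{equation*}
which establishes \eqref{hn}.

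For \eqref{controllhn}, I write $\tfrac1n\psi_{n,h_n^q}(qn^2,0)=q\,h_n^q-\cG_n(h_n^q)$ from \eqref{def:Pnh1} (the $X_n=0$ term vanishes) and $\tpsi(q)=q\tilde h^q-\cG(\tilde h^q)$, and decompose
\begin{equation*}
\tpsi(q)-\tfrac1n\psi_{n,h_n^q}(qn^2,0)=q(\tilde h^q-h_n^q)+[\cG_n(h_n^q)-\cG(h_n^q)]+[\cG(h_n^q)-\cG(\tilde h^q)].
\end{equation*}
Taylor-expanding the last bracket at $\tilde h^q$ and using the first-order condition $\cG'(\tilde h^q)=q$ gives
\begin{equation*}
\cG(h_n^q)-\cG(\tilde h^q)=q(h_n^q-\tilde h^q)+O\bigl((h_n^q-\tilde h^q)^2\bigr)=q(h_n^q-\tilde h^q)+O(n^{-4}),
\end{equation*}
so the two linear-in-$q$ contributions cancel and only $\cG_n(h_n^q)-\cG(h_n^q)=O(n^{-2})$ and the quadratic remainder survive, proving~\eqref{controllhn}.

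The only real obstacle is ensuring uniformity in $q\in[q_1,q_2]$; this reduces to showing that $h_n^q$ is trapped in a fixed compact subset of $(-\beta,\beta)$ for all large $n$, which is handled by the continuity/convergence argument sketched above. Everything else is a quantitative version of the classical fact that, whenever $F_n\to F$ at rate $n^{-2}$ in $\cC^2$ and $F$ is strongly convex, the minimizers and minimum values also converge at rate $n^{-2}$, the cancellation of the linear Taylor term being responsible for the fact that the value estimate is of the same order as the parameter estimate rather than worse.
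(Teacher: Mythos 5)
Your proof is correct and follows the paper's overall scheme (uniform $O(n^{-2})$ quadrature estimate for $\cG_n^{(j)}-\cG^{(j)}$, trapping $h_n^q$ in a compact window, strong convexity of $\cG$, propagate to the value), but you obtain the quadrature estimate by a genuine simplification of the paper's Appendix~\ref{pr:propderiv}. You observe that $\cG_n(h)=\tfrac1n\sum_{i=1}^n f_h(\tfrac{2i-1}{2n})$ with $f_h(x)=\cL(h(\tfrac12-x))$ is \emph{exactly} the composite midpoint rule for $\cG(h)=\int_0^1 f_h$, so the classical error bound $\tfrac{1}{24n^2}\|\partial_x^2 f_h\|_\infty$ gives the $O(n^{-2})$ rate at once, uniformly on a compact $h$-window, with or without differentiating first in $h$. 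The paper instead applies Euler--Maclaurin to the rewritten sum $\sum_{i=1}^N f_{N,h}(i)$, which needs a change of variables, a delicate cancellation inside the trapezoid term $A(N,h)$, and a Bernoulli-polynomial remainder; your route sidesteps all of that. You also make explicit a cancellation in the value estimate: the linear contributions $q(\tilde h^q-h_n^q)$ and $\cG'(\tilde h^q)(h_n^q-\tilde h^q)$ cancel, leaving only $\cG_n(h_n^q)-\cG(h_n^q)+O(n^{-4})$, whereas the paper's bound via \eqref{compfcttaux}--\eqref{contrpremt} reaches the same $O(n^{-2})$ rate without exploiting this cancellation, so your version is slightly tighter and more transparent. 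The one step to flesh out is the uniform-in-$q$ trapping of $h_n^q$: the clean argument, which the paper records as Corollary~\ref{coroboundinf} and Remark~\ref{rmsc}, is that $\cG_n'$ is increasing and $\cG_n'(\beta-\eta)\to\cG'(\beta-\eta)>q_2$, hence $h_n^q<\beta-\eta$ for all large $n$ and all $q\le q_2$; your invocation of uniform convergence plus strong convexity points at the right facts but does not quite articulate the monotonicity step, which is precisely what makes the subsequent estimates uniform in $q\in[q_1,q_2]$.
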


\begin{proof}
We recall \eqref{defGn} and \eqref{deftiG} where the definitions of $\cG_n$ and $\cG$ are displayed respectively.
Let us start with two lemmas stating that $\cG_n'$ and $\cG'$ are $\cC^1$-diffeomorphisms.

\begin{lemma}\label{diffeoGN}
For every $n\geq 2$, the application $\cG_n$ is $\cC^\infty$, strictly convex, even and satisfies 
$\cG'_n(h)\to+\infty$ as $h\nearrow\frac{n\gb}{n-1}$. Moreover, there exists $R>0$ such that 
$(\cG_n)''(h)\geq R$ for every $n\geq 2$ and $h\in  (-\tfrac{n\, \beta}{n-1},\tfrac{n\, \beta}{n-1})$.  As a consequence,
$(\cG_n)'$ is an increasing $\cC^1$ diffeomorphism from $(-\tfrac{n\, \beta}{n-1},\tfrac{n\, \beta}{n-1})$ to $\R$ and 
$(\cG_n)'(0)=0$.
\end{lemma}

\begin{lemma}\label{diffeotildeG}
The application $\cG$ is $\cC^\infty$, strictly convex, even and satisfies 
$\cG'(h)\to+\infty$ as $h\nearrow\gb$. Moreover, there exists an $R>0$ such that 
$(\cG)''(h)\geq R$ for  $h\in  (-\beta, \beta)$. As a consequence,
$(\cG)'$ is an increasing $\cC^1$ diffeomorphism from $(- \beta, \beta)$ to $\R$ and 
$(\cG)'(0)=0$.
\end{lemma}
\begin{proof}
Lemma \ref{diffeoGN} being a discrete counterpart of  Lemma \ref{diffeotildeG}, we will only prove the latter here. One easily observes (recall \eqref{def:cL}) that $\cL$ is $\cC^\infty$,  strictly convex,  even and that $\cL''$ is bounded below by a positive constant uniformly on $(-\beta/2,\beta/2)$. With the help of \eqref{deftiG} we state that  $\cG$ enjoys the very same properties on $(-\beta,\beta)$ so that it simply remains to prove that  $\lim_{h\to \beta^-} \cG'(h)=\infty$. To that aim, we compute
\begin{align*}\label{compderL}
\cG'(h)&=2\, \int_0^{1/2}\,  u\,  \cL'(hu)\, du \geq \frac12 \, \int_{1/4}^{1/2} \, \cL'(hu)\, du\\
&\geq \frac1{2h}  \, (\cL(\tfrac{h}{2})-\cL(\tfrac{h}{4}))
\end{align*}
and it suffices to observe that  $\lim_{h\to \beta^-} \cL(h/2)=\infty$ to complete the proof of the lemma.
\end{proof}

Let us resume the proof of Proposition \ref{controlhn} by recalling that $h_n^q$ is the unique solution in $h$ of $(\cG_n)'(h)=q$
and that $\tilde h^q$ is the unique solution in $h$ of $\cG'(h)=q$.  
At this stage, we state a key result which substantially improves \cite[Lemma 5.1]{CNP16}. Its proof is postponed to Appendix~\ref{pr:propderiv}.
\begin{proposition}\label{controlderiv}
For every $K\in (0,\beta)$, there exist a $C_K>0$ and a $n_K\in \N$ such that for $j\in \{0,1\}$
\begin{equation}\label{controldiccont}
\sup_{h\in [-\beta+K,\beta-K]} \big| \cG_n^{(j)}(h)- \cG^{(j)}(h)\big| \leq \frac{C_K}{n^2}, \quad \forall n \geq n_K.
\end{equation}
\end{proposition}
 With the help of Lemma 
\ref{diffeotildeG} we can state the following corollary of Proposition \ref{controlderiv}.

\begin{corollary}\label{coroboundinf}
For every $M>0$, there exists  a $K>0$ such that 
$\cG^{'}(\beta-K)\geq 2 M$ and $\cG^{'}(K)\leq \frac1{2M}$; and there exists an $n_0\in \N$ such that 
 $\cG_n^{'}(\beta-K)\geq M$ and $\cG_n^{'}(K)\leq \frac1{M}$ for every $n\geq n_0$.
\end{corollary}

\begin{remark}\label{rmsc}
A straightforward consequence of Corollary \eqref{coroboundinf} is that for $[q_1,q_2]\subset (0,\infty)$ there exists 
a $K>0$ and an $n_0\in \N$ such that for every $q\in [q_1,q_2]$ and every $n\geq n_0$ we have $h_n^q, \tilde h^q\in [K,\beta-K]$. 
\end{remark}

We now have all the required tools in hand to prove Proposition \ref{controlhn}. 
By using Lemma \ref{diffeotildeG}, Proposition \ref{controlderiv} and Remark \ref{rmsc},  we can state that there exist $K>0$,  $C>0$ and $R>0$ such that for $n$ large enough and $q\in [q_1,q_2]\cap\N/n^2$ we have 
$h_n^q, \tilde h^q \in [K,\beta-K]$, so 
\begin{align}\label{streqh}
 \big | h_n^q-\tilde h^q|\leq \frac{1}{R}\,  \Big| \int_{h_n^q}^{\tilde h^q} \cG''(x)\, dx \Big|&=\frac{1}{R}\,  \big| \cG'(h_n^q)-\cG' (\tilde h^q)\big| \\
\nonumber &=\frac{1}{R}\, \big| \cG'(h_n^q)- \cG'_n (h_n^q)\big| \leq \frac{C}{R\, n^2}
\end{align}
where we have used that $q=\cG'(\tilde h^q)=\cG'_n(h_n^q)$ for the second equality in \eqref{streqh}. The proof of  \eqref{hn} is therefore completed.

It remains to prove \eqref{controllhn}.  To that aim we write  
\begin{align}\label{compfcttaux}
\big| h_n^q \,  q -  \cG_n(h_n^q)- \tilde h^q \,q + \cG(\tilde h^q)\big|\leq U_{n,q}+ q_2 \big |  h_n^q-\tilde h^q \big|,
\end{align}
where  $U_{n,q}:= | \cG_n(h_n^q)- \cG(\tilde h^q)|$.  Proposition \ref{controlderiv} also tells us that there exists a $C>0$ such that  for $n$ large enough and for every $x\in [K,\beta-K]$ we have  $|\cG_n(x)-\cG(x)|\leq C/n^2$. 
Thus, since $\cG$ is $\cC^1$ and recalling Remark~\ref{rmsc} we can write
\begin{align}\label{contrpremt}
\nonumber U_{n,q}&\leq \big| \cG_n(h_n^q)- \cG(h_n^q)\big| + \big | \cG(h_n^q)- \cG(\tilde h_{0}^q) \big|\\
&\leq \frac{C}{n^2}+\sup\big\{|\cG'(x)|, x\in [K,\beta-K]\big\} \big |  h_n^q-\tilde h^q \big|.
\end{align}
At this stage, \eqref{controllhn} is obtained by combining \eqref{compfcttaux} with  \eqref{hn} and \eqref{contrpremt}.  This completes the proof of Proposition \eqref{controlhn}.

\end{proof}

\subsection{Step 2}\label{Step2}

We recall \eqref{def:nu} and \eqref{def:Pnh1} and we set $a_N:=(\log N)^2$. We define the two boxes
\begin{align}\label{defbox}
\mathcal C_N&:=\big[\bE_{N,h_N^q}(X_{a_N})-(a_N)^{3/4}, \bE_{N,h_N^q}(X_{a_N})+(a_N)^{3/4}\big]\\
\nonumber \mathcal D_N&:=\big[\bE_{N,h_N^q}(A_{a_N})-(a_N)^{7/4}, \bE_{N,h_N^q}(A_{a_N})+(a_N)^{7/4}\big]
\end{align}
and rewrite
$\Pb_{\gb}(\cV_{N,qN^2})= M_{N,q}+E_{N,q}$ where
\begin{align}\label{comphnq}
M_{N,q}&:=\bP_{\gb}\big[\cV_{N,qN^2} \cap \big\{X_{a_N}\in \mathcal C_N, A_{a_N}\in \cD_N\big\}\cap \big\{ X_{N-a_N}\in \mathcal C_N, 
A_N-A_{N-a_N-1}\in \cD_N\big\}\Big].
\end{align}
From now on, $M_{N,q}$ will be referred to as the main term and $E_{N,q}$ as the error term.  The proof of Proposition~\ref{lemunif} is a straightforward consequence of Lemmas~\ref{bounderrorthe} and~\ref{contromain} displayed below, which will be proven in Steps 3 and 4, respectively. 

Let us start with the following Lemma, which allows us to control the error therm uniformly in $q$ belonging to any compact set of $(0,\infty)$. 
 \begin{lemma}\label{bounderrorthe}
 For $[q_1,q_2]\subset (0,\infty)$, there exists  $\gep:\N\mapsto \mathbb{R}^+$ such that $\lim_{N\to \infty} \gep(N)=0$ and  for every $N\in \N$ and 
 $ q\in [q_1,q_2]\cap \frac{1}{N^2}$
 \begin{equation}
E_{N,q}\,\leq\, \frac{\gep(N)}{N^2} e^{-N\,\tpsi(q)}.
\end{equation}
 \end{lemma}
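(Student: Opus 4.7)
The plan is to perform a change of measure to the tilted law $\Pb_{N,h_N^q}$ from (\ref{def:Pnh1}), under which $\cV_{N,qN^2}$ becomes the ``typical'' event. Since on $\cV_{N,qN^2}$ one has $X_N=0$ and $A_{N-1}=A_N-X_N = qN^2$, evaluating the Radon--Nikodym density yields
\begin{equation*}
E_{N,q} \;=\; e^{-N[h_N^q q - \cG_N(h_N^q)]} \, \Pb_{N,h_N^q}\big(\cV_{N,qN^2}\cap B_N^c\big),
\end{equation*}
where $B_N$ denotes the intersection of the four box-constraints in (\ref{comphnq}). Proposition~\ref{controlhn} identifies the exponent with $N\tpsi(q)+O(1/N)$ uniformly in $q\in[q_1,q_2]$, so it remains to prove that the tilted probability above is $o(1/N^2)$ uniformly in $q$.

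Next, I would apply the Markov property at time $a_N$ (and symmetrically at $N-a_N$), writing
\begin{equation*}
\Pb_{N,h_N^q}\big(\cV_{N,qN^2}\cap\{X_{a_N}\notin \mathcal C_N\}\big)=\sum_{x\notin\mathcal C_N,\,a}\Pb_{N,h_N^q}(X_{a_N}=x,A_{a_N}=a,X_i>0\ \forall\,i\leq a_N)\,\Pi_{x,a},
\end{equation*}
where $\Pi_{x,a}$ is the conditional probability that the remaining trajectory reaches $0$ with total area $qN^2$ while staying positive. By a local-limit-type estimate of the same nature as the one underlying Proposition~\ref{lemunif} (and supplied by Steps 3--4 of Section~\ref{pr:lemunif}), one expects $\Pi_{x,a}\leq C/N^2$ uniformly for $(x,a)$ in a polynomial range. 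This would reduce the bound to
\begin{equation*}
\Pb_{N,h_N^q}\big(\cV_{N,qN^2}\cap\{X_{a_N}\notin\mathcal C_N\}\big) \,\leq\, \frac{C}{N^2}\,\Pb_{N,h_N^q}(X_{a_N}\notin\mathcal C_N),
\end{equation*}
and an analogous inequality for the $A_{a_N}$ box.

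The head fluctuations are then controlled by concentration. By Remark~\ref{rmsc} the tilting parameters $\frac{h_N^q}{2}(1-\frac{2i-1}{N})$ all lie in a compact subset of $(-\gb/2,\gb/2)$ for $q\in[q_1,q_2]$ and $i\leq a_N$, so the discrete Laplace increments have uniformly bounded exponential moments. Bernstein's inequality applied at deviation scales $a_N^{3/4}$ for $X_{a_N}$ (whose variance is of order $a_N$) and $a_N^{7/4}$ for $A_{a_N}$ (whose variance is of order $a_N^3$) gives in both cases a bound
\begin{equation*}
\Pb_{N,h_N^q}\big(X_{a_N}\notin\mathcal C_N\big)+\Pb_{N,h_N^q}\big(A_{a_N}\notin\mathcal D_N\big) \,\leq\, C e^{-c\sqrt{a_N}} \,=\, C N^{-c}\,,
\end{equation*}
uniformly in $q\in[q_1,q_2]$, where I used $a_N=(\log N)^2$ to identify $\sqrt{a_N}=\log N$. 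Combining with the $1/N^2$ bulk factor yields the desired bound $\gep(N)/N^2$ with $\gep(N)=O(N^{-c})$, and the symmetric argument at time $N-a_N$ handles the last two box-violations.

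The main obstacle is the uniform local-limit bound $\Pi_{x,a}\leq C/N^2$: proving it uniformly over $q\in[q_1,q_2]$ and over a polynomial range of starting configurations $(x,a)$ requires a robust CLT-type estimate for the tilted bridge conditioned to remain positive and to have a prescribed area. This is precisely the technical heart of Proposition~\ref{lemunif} itself, so the proof of Lemma~\ref{bounderrorthe} is contingent on the machinery developed in Section~\ref{pr:lemunif}; once that machinery is available, the concentration argument above closes the proof without further difficulty.
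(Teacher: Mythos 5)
Your plan captures the paper's strategy accurately: change of measure to $\Pb_{N,h_N^q}$, identification of the exponent via Proposition~\ref{controlhn}, Markov decomposition at time $a_N$, a uniform $C/N^2$ local-limit bound for the ``bulk'' of the bridge, and a concentration estimate for the head. The exponent computation is correct (on $\cV_{N,qN^2}$ one has $A_{N-1}=qN^2$ and $X_N=0$, whence $\psi_{N,h_N^q}(qN^2,0)=N[h_N^q q-\cG_N(h_N^q)]$). Two remarks on where your proposal diverges from the paper, and one genuine soft spot.

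First, for the head fluctuations you invoke Bernstein to get $e^{-c\sqrt{a_N}}=N^{-c}$; the paper is content with a Tchebychev bound $\leq C_1/\sqrt{a_N}=C_1/\log N$, which is weaker but already $o(1)$, since the statement only needs $\gep(N)\to 0$. Both work; Bernstein is over-engineering.

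Second, and this matters, you write that proving $\Pi_{x,a}\leq C/N^2$ ``requires a robust CLT-type estimate for the tilted bridge \emph{conditioned to remain positive} and to have a prescribed area'', and therefore that Lemma~\ref{bounderrorthe} is ``contingent on the machinery developed in Section~\ref{pr:lemunif}.'' Since Lemma~\ref{bounderrorthe} \emph{is} Step~3 of that section, this reasoning would be circular. The paper avoids this: its Lemma~\ref{lem:errorth} obtains exactly your bound $\Pi_{x,a}\leq C/N^2$ by \emph{dropping} the positivity constraint and using the plain local limit theorem for the tilted vector $(A_{N-1},X_N)$, which is Proposition~\ref{llta} (from \cite{CNPT18}), not Proposition~\ref{lemunif}. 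Because the positivity constraint is only dropped (an upper bound), no constrained CLT is needed at this stage — the constrained refinement appears only in Step~4 to compute the main term, not the error term. So the logical dependency you worry about is not actually there, and the argument is not circular.

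Finally, a small but real gap: you state $\Pi_{x,a}\leq C/N^2$ only ``for $(x,a)$ in a polynomial range'' yet immediately write
$\Pb_{N,h_N^q}(\cV_{N,qN^2}\cap\{X_{a_N}\notin\cC_N\})\leq \frac{C}{N^2}\Pb_{N,h_N^q}(X_{a_N}\notin\cC_N)$
as if the local-limit bound held for all admissible $(x,a)$. The complement of $\cC_N$ (width $(\log N)^{3/2}$) is far larger than the polynomial range $|x|\leq N/a_N$, $|a|\leq N^2/a_N$, so the far tails need a separate argument. The paper splits the sum into $R^1_{N,q}$, $R^2_{N,q}$, $R^3_{N,q}$: the first uses Lemma~\ref{lem:errorth}, while the tails $R^2$, $R^3$ are killed by a Chernov bound giving $e^{-\gl N/2a_N}$, vastly smaller than $N^{-2}$. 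Your Bernstein estimate does not cover these far tails, since it controls deviations at scale $a_N^{3/4}$, not $N/a_N$. Adding a Chernov estimate for the tails would close this gap.
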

 
With the next Lemma, we estimate the main therm uniformly in $q$ belonging to any compact set of $(0,\infty)$. Recall \eqref{defKB} and \eqref{defvartheta} for the definitions of $\kappa$ and $\vartheta$.
\begin{lemma}\label{contromain}
For $\beta>0$ and $[q_1,q_2]\subset (0,\infty)$,  
\begin{align}\label{contromain2}
M_{N,q}=  \kappa\big(\tfrac{\tilde h^q}{2}\big)^2\  \frac{\big(\vartheta(\tilde h^q)\big)^{-\frac{1}{2}}}{2\pi N^2} \    e^{-N\,\tpsi(q)}\   (1+o(1))
\end{align}
where $o(1)$ is a function that converges to $0$  as $N \to \infty$  uniformly in  $ q\in [q_1,q_2]\cap \frac{1}{N^2}$.
\end{lemma}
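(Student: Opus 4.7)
My plan is to exploit the tilted probability law $\bP_{N, h_N^q}$ of \eqref{def:Pnh1}, designed precisely so that the event $\{X_N = 0,\, A_N = qN^2\}$ becomes typical. On $\cV_{N, qN^2}$ one has $X_N = 0$ and $A_{N-1} = A_N = qN^2$, so the exponent $\psi_{N, h_N^q}(A_{N-1}, X_N) = N[h_N^q q - \cG_N(h_N^q)]$ is a constant equal to $N\tpsi(q) + O(1/N)$ uniformly in $q \in [q_1, q_2]$ by Proposition~\ref{controlhn}. Inverting \eqref{def:Pnh1} therefore yields
\begin{equation}
M_{N,q} \,=\, e^{-N\tpsi(q)}(1+o(1))\,\bP_{N, h_N^q}[\cA_N],
\end{equation}
where $\cA_N$ is the intersection of events inside \eqref{comphnq}, so it suffices to show $\bP_{N, h_N^q}[\cA_N] \sim \kappa(\tilde h^q/2)^2\, \vartheta(\tilde h^q)^{-1/2}/(2\pi N^2)$.

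I would then factorise $\bP_{N, h_N^q}[\cA_N]$ using independence of the tilted increments. Partitioning on the values of $(X_{a_N}, A_{a_N}) = (x_1, s_1)$ and $(X_{N-a_N}, A_N - A_{N-a_N-1}) = (x_2, s_2)$ inside $\cC_N \times \cD_N$ yields a sum $\sum P_1\, P_2\, P_3$, where $P_1, P_2, P_3$ are the probabilities of the initial, bulk, and final segments. For the initial piece: since $a_N = (\log N)^2 = o(N)$, the tilt drift parameter $\tfrac{h_N^q}{2}(1 - \tfrac{2i-1}{N})$ is uniformly close to $\tilde h^q/2$ for $i \leq a_N$, so on $[0, a_N]$ the walk behaves as an i.i.d.\ walk under $\tilde\Pb_{\tilde h^q/2}$. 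By \eqref{defKB}, the probability of staying strictly positive on $[1, a_N]$ converges to $\kappa(\tilde h^q/2)$; and since the box has widths $a_N^{3/4}, a_N^{7/4}$ dominating the standard deviations $\sqrt{a_N}, a_N^{3/2}$ of $X_{a_N}, A_{a_N}$, the further restriction to the box costs only $1+o(1)$ by an ordinary local CLT. Thus $\sum P_1 = \kappa(\tilde h^q/2)(1+o(1))$, and a time-reversal argument on $[N-a_N, N]$ yields $\sum P_3 = \kappa(\tilde h^q/2)(1+o(1))$ analogously.

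The bulk piece $P_2$ is treated by a two-dimensional local CLT applied to the vector $(X_{N-a_N} - X_{a_N},\, A_{N-a_N-1} - A_{a_N} - (N-2a_N-1)X_{a_N})$ under $\bP_{N, h_N^q}$. This is a sum of independent increments with time-dependent laws, whose covariance matrix, after rescaling and Riemann-sum approximation (using that the variance under $\tilde\Pb_h$ equals $\cL''(h)$), converges to a limiting matrix whose determinant is exactly $\vartheta(\tilde h^q)$ as defined in \eqref{defvartheta}. Since $h_N^q$ was chosen so that the constraint $X_N = 0,\ A_N = qN^2$ corresponds to the conditional mean, the local CLT gives $P_2 = (2\pi N^2 \sqrt{\vartheta(\tilde h^q)})^{-1}(1+o(1))$, uniformly over the boxes because their widths are $o(\sqrt N)$ and $o(N^{3/2})$. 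The residual positivity constraint on $[a_N, N-a_N]$ is essentially free: conditionally on typical endpoints the bridge attains a macroscopic height $\sim N$ in the bulk with fluctuations $O(\sqrt N)$, and near the boundaries the walk is already at level $O(a_N) \gg \sqrt{a_N}$. Collecting $\sum P_1 \cdot P_2 \cdot \sum P_3$ yields \eqref{contromain2}. The main obstacle is establishing the 2D local CLT for this time-inhomogeneous tilted walk with uniform control in $q \in [q_1, q_2]$, which should follow from a Fourier analysis of the joint characteristic function of the increments combined with Proposition~\ref{controlhn}.
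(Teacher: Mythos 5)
Your decomposition at times $a_N$ and $N-a_N$, with a two-dimensional local CLT for the bulk and convergence to $\kappa(\tilde h^q/2)$ on the boundary segments, is exactly the paper's plan. The genuine difference is in how the tilting is organized: you exploit that $\psi_{N,h_N^q}(A_{N-1},X_N)$ is \emph{deterministic} on $\cV_{N,qN^2}$, equal to $\psi_{N,h_N^q}(qN^2,0)=N\tpsi(q)+O(1/N)$ by Proposition~\ref{controlhn}, to pull out the exponential prefactor in one stroke. The paper instead applies the Markov property first, tilts the two boundary segments with the \emph{homogeneous} law $\tilde\Pb_{\tilde h^q/2}$ and the bulk with the $N_2$-step inhomogeneous law $\Pb_{N_2,h_{N_2}^q}$ ($N_2=N-2a_N$), and then has to track and cancel the resulting exponential factors $B^1_{N,q}$ and $B^2_{N,q}$ by hand. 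Your one-shot tilt is cleaner for that bookkeeping.

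But the choice shifts the difficulty elsewhere, and there you have gaps. Under $\Pb_{N,h_N^q}$ the increments on $[0,a_N]$ carry the time-varying tilt parameters $\tfrac{h_N^q}{2}\big(1-\tfrac{2i-1}{N}\big)$; you correctly note these are uniformly close to $\tilde h^q/2$, but Lemma~\ref{convkappa}, which is what delivers convergence to $\kappa(\tilde h^q/2)$ \emph{uniformly in $q$}, is stated and proved only for a constant tilt. You would have to re-prove it for a slowly-varying inhomogeneous tilt; that is doable but not a citation. Likewise, the bulk local CLT you need is for $\Pb_{N,h_N^q}$ restricted to $[a_N,N-a_N]$, whose increments have parameters $\tfrac{h_N^q}{2}\big(1-\tfrac{2(i+a_N)-1}{N}\big)$, whereas Proposition~\ref{llta} is stated for $\Pb_{N_2,h_{N_2}^q}$, a nearby but different inhomogeneous walk; the paper's choice of piecewise tilt is precisely what makes the bulk factor fall into the form that proposition handles. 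Finally, the positivity constraint on $[a_N,N-a_N]$ is \emph{not} automatically free: one has to show that dropping it changes the bulk factor only by $o(1/N^2)$, and the paper does this by a Chernoff bound combined with time reversal (Lemma~\ref{estimG} via Lemma~\ref{lem62}), not by a CLT heuristic. None of these obstacles is fatal, but each one is a real step that your write-up leaves open.
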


 \subsection{Step 3: proof of Lemma \ref{bounderrorthe}}\label{Step3}

Before starting the proof, let us make a quick remark about the time-reversibility 
of the random walk $X$ of law $\bP_{n,h}$.
\begin{remark}[Time-reversal property]\label{timerev}
Recall the definition of $\tilde \Pb_{h}$ from \eqref{def:Pgd}. For $h\in (-\beta/2,\beta/2)$, one can easily check that if $Z$ is a random variable of law $\tilde\Pb_{-h}$ then $-Z$ has law  $\tilde\Pb_{h}$.
Moreover, as explained below  \eqref{def:Pnh1}, if the $n$-step random walk $X:=(X_i)_{i=0}^n$ has law $\bP_{n,h}$, then its increments $(X_{i}-X_{i-1})_{i=1}^{n}$ are independent  and for $i\in \{1,\dots,n\}$ the law of $X_{i}-X_{i-1}$ is $\tilde\Pb$ with parameter $\frac{h}{2} (1-\tfrac{2i-1}{n})$. 
 A first consequence is that $X$ is time-reversible, i.e., 
\begin{equation}\label{timereveq}
(X_i)_{i=0}^n\underset{\text{Law}}{=}(X_{n-i}-X_n)_{i=0}^n.
 \end{equation}
A second consequence is that, under $\bP_{n,h}$, the random walk $(X_i)_{i=0}^n$ is an inhomogeneous Markov chain which,  for $j\in \{1,\dots, N-1\}$, $y\in \mathbb{Z}$ and $\mathcal{O}\subset \mathbb{Z}^{n-j-1}$ satisfies that
\begin{equation}\label{timereveq1}
\bP_{n,h}\big((X_{j+i})_{i=1}^{n-j-1}\in \mathcal{O},\  X_n=0\mid X_j=y\big)=\bP_{n,h}\big((X_{n-j-i})_{i=1}^{n-j-1}\in \mathcal{O},\ X_{n-j}=y\big).
\end{equation}
Note finally that the case $h=0$ corresponds to the random walk $X$ with i.i.d. increments of law $\bP_{\gb}$.   


\end{remark}

\medskip

A straightforward application of \eqref{timereveq} with $n=N$ and $h=0$ allows us  to bound the error term from above as 
\begin{equation}\label{boundBNq}
E_{N,q}\leq 2\,  \bP_{\gb}\big(\cV_{N,qN^2}\cap  \big\{X_{a_N}\notin \mathcal C_N\}\big)  + 2\,  \bP_{\gb}\big(\cV_{N,qN^2}\cap  \big\{A_{a_N}\notin \cD_N\}\big) .
\end{equation}
Using \eqref{def:Pnh1} we obtain that for $\cB=\{X_{a_N}\notin \mathcal C_N\}$ or $\cB=\{A_{a_N}\notin \cD_N\}$
\begin{equation}\label{eq:intcon}
 \bP_{\gb}\big(\cV_{N,qN^2}\cap \cB\big)\leq e^{-\psi_{N,h_N^q}(qN^2,0)}\  \Pb_{N,h_N^{q}}(\cV_{N,qN^2}\cap \cB).
%
 \end{equation}
Note that, the first inequality in Proposition \ref{controlhn} allows us to replace $\psi_{N,h_N^q}(qN^2,0)$ with $N\tpsi(q)$ in the exponential of the  r.h.s. in \eqref{eq:intcon}, at the cost of an at most constant factor. Therefore, the proof of Lemma \ref{bounderrorthe} is completed by the following Claim.

 \begin{claim}\label{errorth}
 For $[q_1,q_2]\subset (0,\infty)$, there exists  $\gep:\N\mapsto \mathbb{R}^+$ such that $\lim_{N\to \infty} \gep(N)=0$ and  for every $N\in \N$ and 
 $ q\in [q_1,q_2]\cap \frac{1}{N^2}$
 \begin{equation}\label{twotb}
 \Pb_{N,h_N^{q}}(\cV_{N,qN^2}\cap \{X_{a_N}\notin \mathcal C_N\})+ \Pb_{N,h_N^{q}}(\cV_{N,qN^2}\cap \{A_{a_N}\notin \cD_N\})\leq \frac{\gep(N)}{N^2}.
 \end{equation}
 \end{claim}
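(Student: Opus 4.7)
The plan is to condition on the pair $(X_{a_N}, A_{a_N})$ and invoke the Markov property of $X$ under $\bP_{N,h_N^q}$ at time $a_N$, so as to factorize the probability into a concentration piece for the first $a_N$ steps and a two-dimensional bridge piece for the remaining $N-a_N$ steps. With $\mu_N := \bE_{N,h_N^q}(X_{a_N})$, $\alpha_N := \bE_{N,h_N^q}(A_{a_N})$, and
$$\Phi_N(x,a):=\bP_{N,h_N^q}(X_N=0,\, A_N=qN^2\mid X_{a_N}=x,\, A_{a_N}=a),$$
I would bound the first term in \eqref{twotb} (after dropping the positivity constraint, which only increases the probability) by $\sum_{|x-\mu_N|>a_N^{3/4},\,a\in\bbZ}\bP_{N,h_N^q}(X_{a_N}=x,\,A_{a_N}=a)\,\Phi_N(x,a)$, and similarly for the second term with the summation restricted to $|a-\alpha_N|>a_N^{7/4}$.

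For the bridge probability $\Phi_N(x,a)$, the pair $(X_N-X_{a_N},\,A_N-A_{a_N}-(N-a_N)x)$ is, under $\bP_{N,h_N^q}$, a sum of $N-a_N$ independent (non-identically distributed) increments whose covariance matrix has entries of orders $N$, $N^2$, $N^3$ with nondegenerate determinant of order $N^4$. A two-dimensional local limit theorem should then yield $\Phi_N(x,a)\leq C/N^2$, uniformly for $(x,a)$ in a natural window around $(\mu_N,\alpha_N)$. With this in hand, the claim reduces to showing that $\bP_{N,h_N^q}(|X_{a_N}-\mu_N|>a_N^{3/4})$ and $\bP_{N,h_N^q}(|A_{a_N}-\alpha_N|>a_N^{7/4})$ both tend to $0$ as $N\to\infty$. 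Under $\bP_{N,h_N^q}$ the first $a_N$ increments are independent, and by Remark~\ref{rmsc} their tilting parameters lie in a compact subset of $(-\gb/2,\gb/2)$ uniformly in $q\in[q_1,q_2]$, so their variances are uniformly bounded. Hence $\mathrm{Var}_{N,h_N^q}(X_{a_N})=O(a_N)$ and, using the representation $A_{a_N}=\sum_{j=1}^{a_N}(a_N-j+1)(X_j-X_{j-1})$, also $\mathrm{Var}_{N,h_N^q}(A_{a_N})=O(a_N^3)$; Chebyshev then gives $O(a_N^{-1/2})=O(1/\log N)$ bounds for both deviation events, which provides the required $\gep(N)$.

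The main obstacle is the uniform two-dimensional local limit estimate for $\Phi_N(x,a)$. It is of the same nature as the LLT needed in Step~4 to estimate the main term $M_{N,q}$, and should be accessible by Fourier inversion together with the uniform lower bounds on $\cG_n''$ provided by Lemma~\ref{diffeoGN}; the delicate point is that the $1/N^2$ bound must hold uniformly over the full range of targets that survive the summation over $(x,a)$, not merely at the typical value, and some care will be required to show that the tails of $(X_{a_N},A_{a_N})$ beyond this uniformity window give a negligible contribution (for instance by a truncation at scale $a_N$ in $X_{a_N}$ and $a_N^2$ in $A_{a_N}$, using the same Chebyshev estimates and a crude Cramér bound beyond that).
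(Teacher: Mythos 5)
Your plan follows the same structure as the paper's proof: decompose on the value of $(X_{a_N},A_{a_N})$, bound the bridge probability on $[a_N,N]$ by $C/N^2$ uniformly over a window via a two-dimensional LLT (the paper's Lemma~\ref{lem:errorth}, which rests on Proposition~\ref{llta}), obtain the $O(a_N^{-1/2})$ factor by Chebyshev, and dispatch the far tails with Chernov bounds. The one technical point glossed over in your plan, but handled carefully in the paper via \eqref{eq:lem:errorth:1.5}--\eqref{eq:lem:errorth:5}, is that the bridge piece on $[a_N,N]$ under $\bP_{N,h_N^q}$ carries the tilting parameter $h_N^q$ rather than $h_{N-a_N}^q$, so one must control the Radon--Nikodym correction before invoking the existing LLT (or, as you propose, reprove the LLT directly by Fourier for this slightly shifted non-homogeneous tilt), and that correction is where \eqref{eq:LGqrelation} and the uniform estimates of Proposition~\ref{controlhn} enter.
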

 
 \begin{proof}
 
 Let us prove that \eqref{twotb} holds true for
 $R_{N,q}:=\Pb_{N,h_N^{q}}(\cV_{N,qN^2}\cap \{X_{a_N}\notin \mathcal C_N\})$ and for $S_{N,q}:=\Pb_{N,h_N^{q}}(\cV_{N,qN^2}\cap \{A_{a_N}\notin \mathcal D_N\})$.

We set $\{X_{[j,k]}>0\}:=\{X_i>0, j\leq i\leq k\}$ for $j\leq k\in \N$. We develop $R_{N,q}$ depending on the values $y$ and $z$ taken by $X_{a_N}$ and $A_{a_N}$ respectively. Then we use Markov property at time $a_N$, combined with the time reversal property \eqref{timereveq1} with $n=N$, $h=h_N^q$,  $j=a_N$ and 
 \[\mathcal{O}=\Big\{x\in \N^{N-j-1}\colon \sum_{i=1}^{N-j-1} x_i=qN^2-z\Big\},\]
on the time interval $[a_N,N]$ to obtain
 \begin{align}\label{defrnq}
R_{N,q}=\sum_{y\in \mathbb{N}\setminus \mathcal C_N}\sum_{z\in \N} &\,\Pb_{N,h_N^{q}}( X_{a_N}=y,\,   A_{a_N}=z,\,  X_{[1,a_N]}>0)\\
\nonumber     &\quad\times\Pb_{N,h_N^{q}}(X_{[1,N-a_N]}>0, X_{N-a_N}=y, A_{N-a_N-1}=q N^2-z).
 \end{align}
 Let $R^1_{N,q}:=R_{N,q}(|y|\leq N/a_N, |z|\leq N^2/a_N)$, $R^2_{N,q}:=R_{N,q}(|y|> N/a_N)$ and $R^3_{N,q}:=R_{N,q}(|z|> N^2/a_N)$, where $R_{N,q}(A)$ denotes the sum from \eqref{defrnq} restricted to terms satisfying the condition $A$; so that
 \begin{equation}\label{decompornq}
 R_{N,q} \;\leq\; R^1_{N,q} + R^2_{N,q} + R^3_{N,q}\;.
 \end{equation} 
Let us prove the upper bound \eqref{twotb} for all three terms in the r.h.s., starting with $R^1_{N,q}$.
 
\begin{lemma}\label{lem:errorth}
For $[q_1,q_2]\subset (0,\infty)$, there exists  a $C>0$ such that for every $N\geq 1$
and $ q\in [q_1,q_2]\cap \frac{1}{N^2}$ and $(y,z)\in \mathbb{Z}^2$ with $|y|\leq N/a_N$, $|z|\leq N^2/a_N$,
\begin{equation}\label{bounduniftilt}
\Pb_{N,h_N^{q}}(  X_{N-a_N}=y, A_{N-a_N-1}=q N^2-z)\leq \frac{C}{N^2}.
\end{equation}
\end{lemma}
 
 We prove this Lemma afterwards. Plugging this in the development \eqref{defrnq} and dropping the condition $X_{[1,a_N]}>0$, we obtain 
\begin{equation}\label{defrnq2}
R^1_{N,q}\leq \frac{C}{N^2} \Pb_{N,h_N^{q}}( X_{a_N} \notin \mathcal C_N).
\end{equation}
Recalling \eqref{def:Pnh1}, a straightforward computation gives us that 
\begin{equation}\label{eq:varrnq}\text{Var}_{N,h_N^{q}}\big(X_{a_N}\big)=\sum_{i=1}^{a_N} \cL''\Big(\tfrac{h_N^q}{2} (1-\tfrac{2i-1}{N})\Big),\end{equation}
 so that, by using Tchebychev inequality  we obtain 
 \begin{align}\label{thcheb}
\nonumber \Pb_{N,h_N^{q}}( X_{a_N} \notin \mathcal C_N)&=  \Pb_{N,h_N^{q}}\big(|X_{a_N}-\bE_{N,h_N^q}(X_{a_N})|>(a_N)^{3/4}\big)\\
\nonumber &\leq \frac{1}{a_N^{3/2}}\text{Var}_{N,h_N^{q}}\big(X_{a_N}\big)\\
\nonumber &  \leq  \frac{1}{a_N^{1/2}} \ \sup_{x\in [0,1]} \Big|\cL''\Big(h_{N}^q (\tfrac12-x)\Big)\Big|\\
& \leq \frac{C_1}{\sqrt{a_N}}
 \end{align}
 for some uniform $C_1>0$, where the last inequality is a consequence of Remark \ref{rmsc} and the observation that $\cL$ is $\cC^\infty$ on $(-\beta/2,\beta/2)$. It remains to combine \eqref{defrnq2} and \eqref{thcheb} to complete the proof for $R^1_{N,q}$.
 
Regarding $R^2_{N,q}$ 
let $(U_i)_{i\geq1}$ be the increments of the process $X$ ; recalling \eqref{def:Pnh1} and applying a Chernov inequality for some $\lambda>0$ small, we have
\begin{align}\label{eq:lem:errorth:1} \nonumber &\sum_{y> N/a_N}\bP_{N,h_N^q}(X_{a_N}=y,A_{a_N}=z, X_{[1,a_N]}>0) \\\nonumber &\;\;\leq\; \Pb_{N,h_N^q}(X_{a_N}>N/a_N)\\
\nonumber &\;\; \leq \, \Ebbzero\left[\exp\left(\lambda(X_{a_N}-N/a_N) + \sum_{i=1}^{a_N} U_i\Big(\frac{h_N^q}2\Big(1-\frac{2i-1}N\Big)\Big)-\cL\Big(\frac{h_N^q}2\Big(1-\frac{2i-1}N\Big)\Big)\right)\right]\\
&\;\; \leq \, \,e^{-\lambda N/a_N} e^{c \,a_N} \;\leq\; C\, e^{-\frac{\gl N}{2a_N}}\,,
\end{align}
for some $c>0, C>0$, which can be taken uniformly in $q\in[q_1,q_2]$. The same upper bound holds for the sum over $y<N/a_N$, hence
\begin{align}\label{eq:lem:errorth:2}
\nonumber R^2_{N,q} \;&\leq\; \sum_{z\in\bbZ} \,\bP_{N,h_N^q}(A_{N-a_N-1}=qN^2-z) \\
\nonumber &\qquad\qquad\times \sum_{|y|> N/a_N}\bP_{N,h_N^q}(X_{a_N}=y,A_{a_N}=z, X_{[1,a_N]}>0)\\
&\leq\; 2\,C\, e^{-\frac{\gl N}{2a_N}}
\end{align}

The last term $R^3_{N,q}$ can be handled similarly, by noticing that $A_{a_N}>N^2/a_N$ implies $X_i>N^2/a_N^2$ for some $1\leq i \leq a_N$, thereby
\begin{align*}
&\sum_{z> N^2/a_N}\bP_{N,h_N^q}(X_{a_N}=y,A_{a_N}=z, X_{[1,a_N]}>0)  \\
&\qquad\leq\; \bP(A_{a_N}>N^2/a_N)\;\leq\; \sum_{i=1}^{a_N} \bP(X_i > N^2/a_N^2) \;\leq\; C\,a_N e^{-\frac{\lambda N^2}{2a_N^2}}\,,
\end{align*}
where we reproduced \eqref{eq:lem:errorth:1}. Similarly to \eqref{eq:lem:errorth:2}, we obtain an upper bound on $R^3_{N,q}$, and recollecting \eqref{decompornq} with \eqref{defrnq2}, \eqref{thcheb} and \eqref{eq:lem:errorth:2}, this finally proves \eqref{twotb} for $R_{N,q}$.

Let us now consider $S_{N,q}$, which we develop similarly to \eqref{defrnq}. Notice that the term $S_{N,q}(y\notin\cC_N)$ is already bounded from above by $R_{N,q}$, and that $S_{N,q}(|z|>N^2/a_N)$ follows the same upper bound as $R^3_{N,q}$ (we do not replicate the proof), thus we only have to prove \eqref{twotb} for $S^1_{N,q}:=S_{N,q}(y\in\cC_N,|z|\leq N^2/a_N)$ to complete the proof of Claim~\ref{errorth}. Recalling Lemma~\ref{lem:errorth}, we have
\begin{equation}\label{defsnq2}
S^1_{N,q}\leq \frac{C}{N^2} \Pb_{N,h_N^{q}}( A_{a_N} \notin \cD_N).
\end{equation}
Similarly to  \eqref{eq:varrnq}, a direct computation gives
\[\text{Var}_{N,h_N^{q}}\big(A_{a_N}\big)=\sum_{i=1}^{a_N} (a_N+1-i)^2\cL''\Big(\tfrac{h_N^q}{2} (1-\tfrac{2i-1}{N})\Big),\]
and using Tchebychev inequality, we obtain
 \begin{align}
\nonumber \Pb_{N,h_N^{q}}( A_{a_N} \notin \mathcal D_N) &\leq \frac{1}{a_N^{7/2}}\text{Var}_{N,h_N^{q}}\big(A_{a_N}\big)\\
\nonumber &  \leq  \frac{1}{a_N^{1/2}} \ \sup_{x\in [0,1]} \Big|\cL''\Big(h_{N}^q (\tfrac12-x)\Big)\Big|\\
\nonumber & \leq \frac{C_1}{\sqrt{a_N}}
 \end{align}
 for some uniform $C_1>0$, where the last inequality is a consequence of Remark \ref{rmsc} and the observation that $\cL$ is $\cC^\infty$ on $(-\beta/2,\beta/2)$. Combining this with \eqref{defsnq2}, this concludes the proof of the Claim.
 \end{proof}

\begin{proof}[Proof of Lemma~\ref{lem:errorth}]
To lighten upcoming formulae in this proof, we will write $N_1:=N-a_N$. 
Recall \eqref{def:Pnh1}, and that $\Pb_{\beta,y}$ denotes the law of the random walk starting from $y\in\bbZ$ with increments distributed as $\Pb_\beta$. Summing over possible values for $(X_N,A_{N-1})$ and using Markov property, we write
\begin{align}\label{eq:lem:errorth:1.5}
\nonumber&\Pb_{N,h_N^{q}}(X_{N_1}=y, A_{N_1-1}=q N^2-z) \\\nonumber&\;\;= \sum_{(u,v)\in\bbZ^2} \Pb_{N,h_N^{q}}(X_{N_1}=y, A_{N_1-1}=q N^2-z, X_N=u, A_{N-1}=v)\\
\nonumber&\;\;= \sum_{(u,v)\in\bbZ^2} e^{\psi_{N,h_N^q}(v,u)} \Pbbzero(X_{N_1}=y, A_{N_1-1}=q N^2-z, X_N=u, A_{N-1}=v)\\
\nonumber&\;\;= \sum_{(u,v)\in\bbZ^2} e^{\psi_{N,h_N^q}(v,u)} \Pbbzero(X_{N_1}=y, A_{N_1-1}=q N^2-z)\\
\nonumber&\hspace{5cm}\times\Pb_{\beta,y}(X_{a_N}=u, A_{a_N-1}=v-qN^2+z-y)\\
\nonumber&\;\;= \Pb_{N_1,h_{N_1}^q}(X_{N_1}=y, A_{N_1-1}=q N^2-z,) \!\sum_{(u,v)\in\bbZ^2} \exp\left(\psi_{N,h_N^q}(v,u) - \psi_{N_1,h_{N_1}^q}(qN^2-z,y)\right) \\
&\hspace{5cm}\times\Pb_{\beta,y}(X_{a_N}=u, A_{a_N-1}=v-qN^2+z-y).
\end{align}
A straightforward consequence of Proposition~\ref{llta} and Remark~\ref{remhess} from Appendix~\ref{pr:estimG} is that the first factor is uniformly controlled by $\frac C{N^2}$ for some uniform $C>0$, so it remains to prove that the second factor is uniformly bounded. It can be written as
\[
\Ebbzero\left[\exp\left(\psi_{N,h_N^q}\left(A_{a_N-1} +qN^2 -z+y\,a_N,\, X_{a_N}+y \right) - \psi_{N_1,h_{N_1}^q}(qN^2-z,y)\right)\right],
\]
and can be estimated with Proposition~\ref{controlhn}. Furthermore, we claim that for all $q\in\R$,
\begin{equation}\label{eq:LGqrelation}
\cL\big(\tilde h^q/2\big) - q\hspace{1pt}\tilde h^q - \cG\big(\tilde h^q\big) \,=\, 0\,.
\end{equation}
Indeed,
\begin{align*}
\cL\big(\tilde h^q/2\big) - \cG\big(\tilde h^q\big)\,&=-\int_0^1 \cL\big(\tilde h^q(\tfrac12-y)\big)\, dy+\cL(\tilde h^q/2)\\
&=\,\int_0^1 \int_y^1\tilde h^q \cL'\big(\tilde h^q(u-\tfrac12)\big) du\, dy\\
&=\,\tilde h^q\int_0^1 u \cL'\big(\tilde h^q(u-\tfrac12)\big) du\\
&=\, \tilde h^q \hspace{1pt} \cG'\big(\tilde h^q\big) \,=\, \tilde h^q \hspace{1pt}q\,,
\end{align*}
where we have used that $\cL$ is even to obtain the second line and that $\tilde h^q$ is solution in $h$ of 
$\cG'(h)=q$ (recall Step 1) to obtain the last line. 

We deduce from \eqref{eq:lem:errorth:1.5}, Proposition~\ref{controlhn} and \eqref{eq:LGqrelation} (we do not write the details here) that there exists $C_1>0$ such that for $n\in\N$, $h\in]-\beta,\beta[$ and $|y|\leq N/a_N$, $|z|\leq N^2/a_N$,
\begin{align}\label{eq:lem:errorth:3}
\nonumber&\Pb_{N,h_N^{q}}(X_{N_1}=y, A_{N_1-1}=q N^2-z) \\
& \qquad \leq\, \frac {C_1}{N^2} e^{-a_N\cL(\tilde h^q/2)} \,\Ebbzero\left[\exp\left(2\tilde h^q\frac{A_{a_N-1}}{N} - \frac{\tilde h^q}{2} X_{a_N}\left(1-\frac{2}{N} \right) \right)\right].
\end{align}
Let us now focus on that last factor, which we part into two terms. On the one hand, notice that
\begin{align}\label{eq:lem:errorth:4}
\nonumber&\Ebbzero\left[\exp\left(2\tilde h^q\frac{A_{a_N-1}}{N} - \frac{\tilde h^q}{2} X_{a_N}\left(1-\frac{2}{N} \right) \right)\ind_{\{A_{a_N-1}\leq N\}}\right] \\
\nonumber&\qquad\leq\; e^{2\tilde h^q}\Ebbzero\left[\exp\left(- \frac{\tilde h^q}{2} X_{a_N}\left(1-\frac{2}{N} \right) \right)\right]\\
&\qquad\leq\; e^{2\tilde h^q} \exp\left(a_N \cL\Big(\frac{\tilde h^q}2\Big(1-\frac2N\Big)\Big)\right)
\;\leq\; e^{2\tilde h^q + a_N \cL(\tilde h^q/2)}\,,
\end{align}
where we used that $\cL$ is symmetric, and non-decreasing on $[0,+\infty)$. On the other hand, the event $A_{a_N-1}>N$ implies that $U_i>N/a_N^2$ for some $1\leq i \leq a_N-1$, where $(U_i)_{i\geq1}$ denotes the increments of the process $X$. So
\begin{align}\label{eq:lem:errorth:5}
\nonumber&\Ebbzero\left[\exp\left(2\tilde h^q\frac{A_{a_N-1}}{N} - \frac{\tilde h^q}{2} X_{a_N}\left(1-\frac{2}{N} \right) \right)\ind_{\{A_{a_N-1}> N\}}\right] \\
\nonumber&\qquad\leq\; \sum_{i=1}^{a_N-1} \Ebbzero\left[\ind_{\{U_i>N/a_N^2\}} \exp\Bigg(\sum_{j=1}^{a_N-1} U_j\,\tilde h^q \Big(\tfrac{2(a_N-i)+1}N-\tfrac12\Big)\Bigg)\right]\\
\nonumber&\qquad\leq\; \sum_{i=1}^{a_N-1} \left[\prod_{j\neq i} e^{\cL\big(\tilde h^q\big(\frac{2(a_N-i)+1}{N}-\frac12\big)\big)} \times \sum_{k> N/a_N^2} \frac{e^{-\frac{\gb}2k}}{c_\gb} e^{\tilde h^q\big(\frac{2(a_N-k)+1}{N}-\frac12\big)} \right]\\
&\qquad\leq\; C\, (a_N-1) \, e^{C_1(a_N-2)} \, e^{-\tfrac{\gb}{2}\tfrac{N}{a_N^2}} \;\leq\; C_2\,e^{-\tfrac{\gb}{4}\tfrac{N}{a_N^2}}\,,
\end{align}
for some $C,C_1,C_2>0$ uniform in $q\in[q_1,q_2]$. Plugging \eqref{eq:lem:errorth:4} and \eqref{eq:lem:errorth:5} in \eqref{eq:lem:errorth:3}, this concludes the proof of Lemma~\ref{lem:errorth}.
\end{proof}

\subsection{Step 4: proof of lemma \ref{contromain}}\label{Step4}

In the following, we set $\bar x=(x_1,x_2)$ and $\bar a=(a_1,a_2)$. We define also
\begin{equation}
\mathcal{H}_N:=\{(\bar x,\bar a)\in \mathcal C_N^2\times \mathcal D_N^2\}.
\end{equation} 

We use Markov property at time $a_N$ and $N-a_N$ and we apply time-reversibility on the part of the walk between time $N-a_N$ and $N$ (i.e.,  \eqref{timereveq} with $n=a_N$ and $h=0$) to obtain
\begin{equation}\label{decomp}
M_{N,q}=\sum_{(\bar x,\bar a)\in \cH_N} R_{N}(x_1,a_1) \, T_N(\bar x, \bar a)\, R_N(x_2,a_2),
\end{equation}
with 
\begin{align}
R_{N}(x,a):= \bP_{\gb}(X_{[1,a_N]}>0,  X_{a_N}=x, A_{a_N}=a)
\end{align}
and, after setting $N_2=N-2a_N$,
\begin{equation}
T_N(\bar x, \bar a):= \bP_{\gb}\big(X_{N_2}=x_2-x_1,\, A_{N_2-1}=qN^2-a_1-a_2-x_1 (N_2-1),\,  X_{[0,N_2]}>-x_1\big)
\end{equation}
We begin with $R_{N}(x_1,a_1)$ and $R_N(x_2,a_2)$. For conciseness, we set $ h:=\tilde h^q/2$. Then, we recall \eqref{def:Pgd} and we perform a change of measure
by tilting every increment $(X_{i+1}-X_i)_{i=0}^{a_N-1}$ with $\tilde \Pb_h$, that is for $(x,a)\in \{(x_1,a_1), (x_2,a_2)\}$
\begin{equation}\label{tiltpremtrois}
R_{N}(x,a):= e^{-h x+a_N \cL(h)}\,  \tilde\Pb_h(X_i>0, \forall i\leq a_N, X_{a_N}=x, A_{a_N}=a).
\end{equation}
For the second term $T_N(\bar x, \bar a)$ we apply the tilting introduced in \eqref{def:Pnh1} and we obtain
\begin{align}\label{expT}
T_N(\bar x, \bar a)=G_{N,\bar x,\bar a}^{\,q}\  e^{-h_{N_2}^q \big(\frac{qN^2-a_1-a_2}{N_2}-x_1\big(1-\frac1{N_2}\big)\big)} \, e^{+\frac{h_{N_2}^q}{2} \big(1-\frac{1}{N_2}\big) (x_2-x_1)+N_2\, \cG_{N_2}(h_{N_2}^q)}
\end{align} 
with
\begin{equation}\label{defG}
G_{N,\bar x,\bar a}^{\,q}:=\Pb_{N_2, h_{N_2}^{q}}\Big[\Lambda_{N_2}=\big(\tfrac{qN^2-a_1-a_2}{N_2}-x_1\big(1-\tfrac1{N_2}\big), \, x_2-x_1\big), X_{[0,N_2]}>-x_1\Big].
\end{equation}
Recollecting \eqref{decomp}, \eqref{tiltpremtrois}, \eqref{expT} and \eqref{defG}, we obtain 
\begin{align}\label{decomp1}
M_{N,q}= \sum_{(\bar x,\bar a)\in \cH_N}  e^{B^1_{N,q}(\bar a)+B^2_{N,q}(\bar x)} \tilde\Pb_h(&X_{[1,a_N]}>0, X_{a_N}=x_1, A_{a_N}=a_1)\  G_{N,\bar x,\bar a}^{\,q}\\
\nonumber  &  \tilde \bP_{h}(X_{[1,a_N]}>0, X_{a_N}=x_2, A_{a_N}=a_2)
\end{align}
with 
\begin{align}
B^1_{N,q}(\bar a)\,&:=\,N_2\, \cG_{N_2}(h_{N_2}^q)+2\, a_N\, \cL(h)-h_{N_2}^q \, \tfrac{qN^2-a_1-a_2}{N_2}\,,\\
\nonumber B^2_{N,q}(\bar x)&:=\, (x_1+x_2)\bigg[\tfrac{h_{N_2}^q}{2} \big(1-\tfrac{1}{N_2}\big) -h\bigg] \,.
\end{align}
Henceforth, we drop the $\bar a$ and $\bar x$ dependency of $B^1_{N,q}(\bar a)$ and $B^2_{N,q}(\bar x)$ for conciseness.  
We consider first  $B^2_{N,q}$. We recall that $h=\tilde h^q/2$, 
thus
\begin{align}
B^2_{N,q}=\frac{x_1+x_2}{2} \, \Big(h_{N_2}^q -\tilde h^q - \tfrac{h_{N_2}^q}{N_2}\Big) 
\,,
\end{align} 
which allows us to write the upper bound
\begin{align}
|B^2_{N,q}|\leq \frac{|x_1|+|x_2|}{2} \  \Big(\big |h_{N_2}^q -\tilde h^q\big |+ \tfrac{ h_{N_2}^q}{ N_2}\Big)\,.
\end{align} 
We recall \eqref{def:Pgd} and observe that $x\mapsto  \tilde \bE_{x}(X_1)$ is non decreasing. Thus, it suffices to apply Remark
\ref{rmsc} to conclude that there exist a $K>0$ and a $N_0\in \N$ such that, for $N\geq N_0$ and $q\in [q_1,q_2]\cap \frac{\N}{N^2}$  the inequality  
$\bE_{N,h_N^q}(X_{a_N})\leq a_N \,  \tilde \bE_{(\beta-K)/2}(X_1)$
holds true. Therefore, by recalling the definition of $\mathcal C_N$ in \eqref{defbox} we can assert that every $(x_1,x_2)\in \mathcal (C_N)^2$ 
satisfies  $|x_1|+|x_2|\leq \cst  a_N$.
It remains to use \eqref{hn} to conclude that provided  $N_0\in \N$ is large enough,  for every  $q\in [q_1,q_2]\cap \frac{\N}{N^2}$ and  $N\geq N_0$
\begin{align}\label{bdB2}
\nonumber |B^2_{N,q}|\;&\leq\, \text{(const.)}\,   a_N\,  \Big(\frac{C}{(N_2)^2}+\frac{\beta}{N_2}\Big) \\
&\leq\, \text{(const.)}\,  \frac{a_N}{N_2},
\end{align} 
and therefore, $B^2_{N,q}$ converges to $0$ as $N\to \infty$ uniformly in $q\in [q_1,q_2]$. 

Now, we consider $B^1_{N,q}$. We recall the definition of $\mathcal D_N$ in \eqref{defbox} and similarly to what we did above, we can assert that
provided $N_0$ is chosen large enough, for $N\geq N_0$ and $q\in [q_1,q_2]\cap \frac{\N}{N^2}$  we have that  every $(a_1,a_2)\in \mathcal (D_N)^2$
satisfies $|a_1|+|a_2|\leq \cst a_N^2$. Moreover,  
\begin{equation}\label{perturbq}
\frac{qN^2-a_1-a_2}{N_2}=q N+2 q a_N+O\Big(\frac{(a_N)^2}{N}\Big)
\end{equation}
and therefore  $B^1_{N,q}=\hat B_N(q) +O\big(\frac{(a_N)^2}{N}\big)$ with
\begin{equation}\label{defbnq}
\hat B_N(q) :=N_2\, \cG_{N_2}(h_{N_2}^q)+2\, a_N\, \cL(\tilde h^q/2)-h_{N_2}^q \, q (N+2\, a_N).
\end{equation}
Our aim is to show that 
\begin{equation}\label{bnq}
\hat B_N(q)=N  \cG(\tilde h^q)-N\tilde h^q+o(1)
\end{equation}
and for that we rewrite 
\begin{align*}
\hat B_N(q)&=N_2 \Big[ \cG_{N_2}(h_{N_2}^q)-h_{N_2}^q \, q\Big] +2\, a_N\, \big[\cL(\tilde h^q/2)-2\,  h_{N_2}^q \, q\big] \\ 
 &=N_2 \Big[\cG(\tilde h^q)-\tilde h^q \, q+O\big(\tfrac{1}{(N_2)^2}\big) \Big]+ 2\, a_N\, \Big[\cL(\tilde h^q/2)-2\, \tilde h^q \, q + O\big(\tfrac{1}{(N_2)^2}\big)\Big] \\
&=N \big[\cG(\tilde h^q)-\tilde h^q \, q\big] +2 a_N \big[\cL(\tilde h^q/2) -\tilde h^q \, q- \cG(\tilde h^q)\big] +O(\tfrac{1}{N_2})
\end{align*}
which, by  \eqref{controllhn} and \eqref{hn}, is true uniformly in $N\geq N_0$ and $q\in [q_1,q_2]\cap \frac{\N}{N^2}$, provided $N_0$ is chosen large enough. Recalling \eqref{eq:LGqrelation}, this completes the proof of \eqref{bnq}.


We recall that to lighten notations we sometimes use $h=\tilde h^q/2$. At this stage, we deduce from \eqref{decomp1}, \eqref{bdB2} and \eqref{bnq} that 
\begin{align}\label{decomp2}
M_{N,q}= (1+o(1)) \, e^{N \big(\cG(\tilde h^q)-\tilde h^q q\big)} \sum_{(\bar x,\bar a)\in \cH_N} &\tilde\Pb_h(X_{[1,a_N]}>0, X_{a_N}=x_1, A_{a_N}=a_1)\\
\nonumber &  G_{N,\bar x,\bar a}^{\,q} \   \tilde \bP_{h}(X_{[1,a_N]}>0, X_{a_N}=x_2, A_{a_N}=a_2).
\end{align}
We will complete the present step subject to Lemma \ref{estimG} below. This Lemma will be proven in Appendix~\ref{pr:estimG}.
\begin{lemma}\label{estimG}
For $\beta>0$,
\begin{equation}
 G_{N,\bar x,\bar a}^{\,q}=\frac{\big(\vartheta(\tilde h^q)\big)^{-\frac{1}{2}}}{2\pi N^2} (1+o(1))
\end{equation}
where $o(1)$ is a function that converges to $0$ as $N\to \infty$ uniformly in 
$q\in [q_1,q_2]\cap \frac{\N}{N^2}$ and $(\bar x,\bar a)\in \cH_N$.
\end{lemma}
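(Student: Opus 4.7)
The plan is a two-step reduction. First, I establish a two-dimensional local central limit theorem (LCLT) for the unconstrained probability $\Pb_{N_2, h_{N_2}^q}[\Lambda_{N_2} = \mathrm{target}]$. Second, I show that the positivity constraint $X_{[0,N_2]} > -x_1$ is satisfied with probability $1 - o(1)$ under this tilted law, so dropping it only introduces a negligible correction.

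For the LCLT, I observe that under $\Pb_{N_2, h_{N_2}^q}$ the pair $(A_{N_2-1}, X_{N_2})$ is a sum of independent $2$-dimensional integer-valued vectors, namely $(A_{N_2-1}, X_{N_2}) = \sum_{j=1}^{N_2}\big((N_2-j)\,\mathbf{1}_{\{j \le N_2-1\}},\, 1\big)\, U_j$, where $U_j$ has law $\tilde\Pb_{\theta_j}$ with $\theta_j := \tfrac{h_{N_2}^q}{2}\big(1 - \tfrac{2j-1}{N_2}\big)$. By construction of the tilt, the mean of the sum is exactly $(qN_2^2, 0)$; and since $(\bar x, \bar a) \in \cH_N$, the target point deviates from the mean by $O(N a_N)$ in the area coordinate and by $O(a_N^{3/4})$ in the endpoint coordinate, both of order $o(\sqrt{N_2})$ in standardized units, so that the Gaussian exponential factor is $1 + o(1)$. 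A direct Riemann-sum computation gives the covariance matrix $\Sigma_{N_2}$ with $\det \Sigma_{N_2} = N_2^4\, \vartheta(\tilde h^q)\,(1+o(1))$ uniformly in $q \in [q_1,q_2]$. Applying a 2D LCLT for sums of independent (non-i.i.d.) lattice variables --- stated and proved in Appendix~\ref{pr:estimG} (Proposition~\ref{llta}) --- one deduces
\[
\Pb_{N_2, h_{N_2}^q}[\Lambda_{N_2} = \mathrm{target}] \;=\; \frac{1}{2\pi N_2^2 \sqrt{\vartheta(\tilde h^q)}}\,(1+o(1))
\]
uniformly in $(\bar x, \bar a) \in \cH_N$ and $q \in [q_1, q_2]$; since $N_2 \sim N$, the factor $N_2^2$ may be replaced by $N^2$.

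To handle the positivity constraint, let $T := \inf\{j \ge 1 : X_j \le -x_1\}$ and write $G_{N,\bar x, \bar a}^q = \Pb[\Lambda_{N_2} = \mathrm{target}] - R_N$ with $R_N := \Pb_{N_2, h_{N_2}^q}[T \le N_2,\, \Lambda_{N_2} = \mathrm{target}]$. Applying the strong Markov property at $T$ together with the LCLT for the remaining walk yields $R_N \le \tfrac{C}{N^2}\,\Pb_{N_2, h_{N_2}^q}[\min_{0 \le j \le N_2} X_j \le -x_1]$. Under the tilted law, the mean profile $m(i) := \bE_{N_2,h_{N_2}^q}[X_i] = \sum_{j \le i}\cL'(\theta_j)$ is nonnegative and concave on $[0,N_2]$, vanishes at the endpoints, and satisfies $m(i) \ge c\min(i, N_2-i)$ for some uniform $c > 0$. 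Since $|x_1| \asymp a_N = (\log N)^2$, a Chernoff bound with an optimized small tilt parameter --- using that $\cL''(\theta_j)$ is uniformly bounded on the relevant range of $\theta_j$ --- yields $\Pb[X_i \le -x_1] \le e^{-c' a_N}$ uniformly in $i \in \{0,\dots,N_2\}$, and a union bound gives $\Pb[\min_j X_j \le -x_1] \le N_2\, e^{-c' a_N}$, which is $o(N^{-k})$ for every $k$. Hence $R_N = o(1/N^2)$, and combining with the previous step proves the lemma.

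The main obstacle is the 2D LCLT itself: for non-identically distributed Laplace-type lattice increments, one must uniformly control the characteristic function both near the origin (to extract the Gaussian contribution) and away from the origin (to show that it stays bounded away from $1$), with constants uniform in the parameter $q$ and in the target point; this is precisely the content of the appendix. The positivity correction is by contrast elementary, and relies on the fact that the scale $a_N = (\log N)^2$ was chosen precisely to make $|x_1|$ large enough that the constraint $X_{[0,N_2]} > -x_1$ becomes essentially free.
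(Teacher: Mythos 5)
Your first step (the unconstrained LCLT) follows the paper's approach and is essentially correct, including the identification of the limiting Gaussian density and the uniformity argument over $(\bar x, \bar a)\in\cH_N$ and $q\in[q_1,q_2]$. Your handling of the positivity constraint, however, contains a genuine gap: the claimed uniform estimate $\Pb_{N_2, h_{N_2}^q}(X_i \le -x_1) \le e^{-c' a_N}$ is \emph{false} for $i$ near $N_2$. Indeed, by construction the tilt is symmetric in time, so $\bE_{N_2, h_{N_2}^q}[X_{N_2}]=0$ while $\mathrm{Var}_{N_2, h_{N_2}^q}(X_{N_2})=\Theta(N_2)$; since $x_1 \asymp a_N = (\log N)^2 = o(\sqrt{N_2})$, the probability $\Pb_{N_2, h_{N_2}^q}(X_{N_2} \le -x_1)$ tends to $1/2$, not to zero. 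In fact $\Pb_{N_2, h_{N_2}^q}(\min_{j\le N_2} X_j \le -x_1)\to 1$, so your union bound gives an upper bound greater than $1$ rather than $o(N^{-k})$, and the estimate $m(i)\ge c\min(i,N_2-i)$, while roughly correct, is of no use because the standard deviation $\sigma(i)=\Theta(\sqrt{\min(i, N_2-i)\vee 1 \cdot\max(i, N_2-i)})$ eventually swamps both $m(i)$ and $x_1$ near the endpoint.

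There is also a secondary problem with the decomposition $R_N \le \tfrac{C}{N^2}\,\Pb[\min_j X_j \le -x_1]$: after the strong Markov property at $T$ the remaining walk has only $N_2-T$ steps, so the conditional LCLT bound is $C/(N_2-T)^2$, which is not $O(N^{-2})$ when $T$ is close to $N_2$; moreover this shorter, time-shifted walk has a tilt profile that is not of the form $\Pb_{n, h_n^q}$ and hence is not directly covered by Proposition~\ref{llta}.

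The paper avoids both issues in a way you should note. The error term $\hat G_{N,\bar x}^{\,q}$ in \eqref{defhatg} retains the \emph{endpoint} constraint $\{X_{N_2}=x_2-x_1\}$ alongside $\{X_i\le -c_1 a_N\}$. For $i \ge N_2/2$ one then applies the time-reversal identity \eqref{timereveq} (which is exact for the law $\Pb_{n,h}$), converting the event into $\{X_{N_2-i}\le -c_1 a_N + x_1 - x_2\}$ with $N_2-i \le N_2/2$ and $|x_1-x_2| \le 2a_N^{3/4} \ll a_N$. Both halves are then controlled by Lemma~\ref{lem62}, which gives the exponential decay $\bE_{N,h_N^q}[e^{-\lambda X_j}]\le C' e^{-C_1 j}$ for $j\le N/2$ --- a much stronger and genuinely non-trivial statement than the boundedness of $\cL''$, since it encodes the fact that the drift profile at the left endpoint is bounded away from zero uniformly over $q$. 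Summing the Chernoff bound over $i\le N_2/2$ then gives the required $o(N^{-2})$, with the prefactor $e^{-\lambda c_1 a_N/2}=e^{-\Theta((\log N)^2)}$ doing the work. Without the endpoint constraint plus time reversal, there is no way to restrict attention to small $j$, and the estimate genuinely fails.
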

By applying Lemma \eqref{estimG}, we may rewrite \eqref{decomp2} as   
\begin{align}\label{hatanq}
M_{N,q}=  (1+o(1)) \, \frac{\big(\vartheta(\tilde h^q)\big)^{-\frac{1}{2}}}{2\pi N^2}\,  e^{N \big(\cG(\tilde h^q)-\tilde h^q q\big)} \, \hat M_{N,q}
\end{align}
with
\begin{align}
\nonumber \hat M_{N,q}&:=\!\!\sum_{(\bar x,\bar a)\in \cH_N}\!\! \tilde\Pb_h(X_{[1,a_N]}>0, X_{a_N}=x_1, A_{a_N}=a_1)\,   \bP_{h}(X_{[1,a_N]}>0, X_{a_N}=x_2, A_{a_N}=a_2)\\
&= \big[ \tilde\Pb_h(X_{[1,a_N]}>0, X_{a_N}\in \mathcal{C}_N, A_{a_N}\in \mathcal{D}_N)\big]^2,
\end{align}
from which we deduce that 
\begin{align}\label{estimhatanq}
\big|\big(\hat M_{N,q}\big)^{1/2}-  \tilde\Pb_h(X_{[1,a_N]}>0)\big| \leq \tilde\Pb_h(X_{a_N}\notin \mathcal{C}_N)+\tilde\Pb_h(A_{a_N}\notin \mathcal{D}_N).
\end{align}

Remark \ref{rmsc} guarantees us again
that there exist a $K>0$ and a $N_0\in \N$ such that  $\tilde h^q\in [K,\beta-K]$ for  $q\in [q_1,q_2]$ and that $\tilde h_N^q\in [K,\beta-K]$ for $N\geq N_0$ and $q\in [q_1,q_2]\cap \frac{N}{N^2}$.  Since
$\{X_{a_N}\notin \mathcal{C}_N\}=\{|X_{a_N}-\bE_{N,h_N^q}(X_{a_N})|\geq (a_N)^{3/4}\}$
we set  $\alpha_q:=\tilde \bE_h(X_1)$ so that $\tilde \bE_h(X_{a_N})=a_N \alpha_q$ (with $h=\tilde h^q/2$)
and we compute 
\begin{align}
\nonumber \big| a_N \alpha_q- \bE_{N,h_N^q}(X_{a_N})\big |&=\Big| \sum_{i=1}^{a_N} \tilde \bE_h(X_1)-\tilde \bE_{h_{N}^q (1-\frac{i}{N})-\frac{h_{N}^q}{2} (1-\frac{1}{N})}(X_1)\Big |\\
\nonumber &\leq \sum_{i=1}^{a_N}\Big| \cL'(\tfrac{\tilde h^q}{2})- \cL'\Big(\tfrac{h_{N}^q}{2}+ \tfrac{h_{N}^q}{2} (\tfrac{1-2i}{N})\Big)\Big |\\
\nonumber &\leq  \sum_{i=1}^{a_N} \max_{x\in \big[\frac K2,\frac{\beta-K}{2}\big] }\big|\cL''(x)\big| \  \Big(\Big| \tfrac{\tilde h^q-h_{N}^q}{2}\Big|+\beta \tfrac{a_N}{N}\Big)\\
&\leq \cst \ \Big(\frac{a_N}{N^2}+\frac{(a_N)^2}{N}\Big).
\end{align}
As a consequence $\{X_{a_N}\notin \mathcal{C}_N\}\subset\{|X_{a_N}-a_N \alpha_q|\geq \tfrac12 (a_N)^{3/4}\}$ and by using Tchebychev inequality we conclude that 
\begin{align}\label{tchebC}
\nonumber \tilde\Pb_h(|X_{a_N}-a_N \alpha_q|\geq \tfrac12 (a_N)^{3/4})&\leq \cst\,  \tfrac{1}{\sqrt{a_N}} \, \text{Var}_{\tilde h^q/2}(X_1)\\
&\leq \cst \, \tfrac{1}{\sqrt{a_N}},
\end{align}
where we have used that $\cL$ is $\mathcal C^2$ and that $\cL^{''}(x)=\text{Var}_x(X_1)$ for every $x\in (-\beta/2,\beta/2)$. This finally proves that  $\lim_{N\to \infty}  \tilde\Pb_h(X_{a_N}\notin \mathcal{C}_N)=0$  
and the same type of argument also gives that   $\lim_{N\to \infty} \tilde\Pb_h(A_{a_N}\notin \mathcal{D}_N)=0$. Both convergences hold true 
uniformly in $q\in [q_1,q_2]$.
\medskip

We come back to \eqref{estimhatanq} and we can write that  
\begin{equation}\label{mnpc}
\hat M_{N,q}= (\tilde\Pb_h(X_{[1,a_N]}>0))^2 (1+o(1))
\end{equation}
 where $o(1)$ is uniform in 
$q\in [q_1,q_2]\cap \frac{N}{N^2}$. 
We recall that, by definition,  $\kappa(h)=\tilde\Pb_h(X_{[1,\infty]}>0)$ (see \eqref{defKB}). At this stage, we need to prove the convergence of $\tilde\Pb_h(X_{[1,a_N]}>0)$ towards $\kappa(h)$ uniformly in $h$
in any compact subset of $(0,\beta/2)$. This is the object of the following lemma.
\begin{lemma}\label{convkappa}
For $\beta>0$, the function $x\in (0,\beta/2)\mapsto \kappa(x)$ is continuous and for $[x_1,x_2]\in (0,\beta/2)$ 
\begin{equation}\label{convunifvk}
\tilde\Pb_x(V_{[1,k]}>0)=\kappa(x)+o(1), 
\end{equation}
 where $o(1)$ is a function of $x$ and $k$ that converges to $0$ as $k\to \infty $
uniformly in $x\in [x_1,x_2]$.
Moreover, for $\gb>0$ and $x\in[0,\gb/2)$, one has
\begin{equation}\label{explicitkappa}
\kappa(x) \,=\, \frac{e^{2x}-1}{e^{x+\gb/2}-1}\,.
\end{equation}
\end{lemma}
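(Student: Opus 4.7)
My plan is to first establish the explicit formula \eqref{explicitkappa}, from which the continuity of $\kappa$ on $(0,\beta/2)$ follows immediately, and then to prove the uniform convergence separately via a Chernov-type tail estimate. The case $x=0$ in \eqref{explicitkappa} is trivial: under $\tilde\Pb_0=\Pb_\gb$ the random walk is symmetric and hence recurrent, so both sides of \eqref{explicitkappa} vanish.

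\textit{Step 1: a martingale identity.} Since $\cL$ is even, one computes $\tilde\bE_x[e^{-2x(X_1-X_0)}]=e^{\cL(-x)-\cL(x)}=1$, so $M_n:=e^{-2xX_n}$ is a non-negative $\tilde\Pb_x$-martingale. Set $\tau:=\inf\{n\geq 1\colon X_n\leq 0\}$, so that $\kappa(x)=\tilde\Pb_x(\tau=\infty)$. Optional stopping at the bounded time $\tau\wedge n$ yields $\tilde\bE_x[M_{\tau\wedge n}]=1$ for every $n\in\N$. Letting $n\to\infty$, I split into $\{\tau\leq n\}$ (where $M_{\tau\wedge n}=e^{-2xX_\tau}$ with $X_\tau\leq 0$, handled by monotone convergence) and $\{\tau>n\}$ (where $M_n\leq e^{-2x}<1$ because $X_n\geq 1$, and $M_n\to 0$ almost surely on $\{\tau=\infty\}$ by the law of large numbers, using $\cL'(x)>0$ for $x>0$). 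This gives
\begin{equation*}
\tilde\bE_x\bigl[e^{-2xX_\tau}\,\ind_{\{\tau<\infty\}}\bigr]=1.
\end{equation*}

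\textit{Step 2: conditional law of the overshoot.} The key input is the memoryless property of the discrete Laplace law: under $\tilde\Pb_x$, for every integer $m\geq 0$, the conditional law of $-Z_1-m$ given $\{Z_1\leq -m\}$ is the geometric law on $\N\cup\{0\}$ with parameter $1-e^{-(\beta/2+x)}$, independently of $m$ (this is a one-line computation from the explicit form of $\tilde\Pb_x$). I apply this to the overshoot $-X_\tau$ by distinguishing $\{\tau=1\}$ (where $-X_\tau=-Z_1$ with $Z_1\leq 0$, i.e.\ $m=0$) and $\{\tau\geq 2\}$ (where, conditionally on the event $\{X_{\tau-1}=m\}$ with $m\geq 1$, one has $-X_\tau=-Z_\tau-m$ with $Z_\tau$ independent of $X_1,\ldots,X_{\tau-1}$ and $Z_\tau\leq -m$); in both cases $-X_\tau$ follows the same shifted geometric law, so $-X_\tau$ under $\tilde\Pb_x(\cdot\mid\tau<\infty)$ is geometric with parameter $1-e^{-(\beta/2+x)}$. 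A direct computation of the geometric moment generating function then gives
\begin{equation*}
\tilde\bE_x[e^{-2xX_\tau}\mid\tau<\infty]=\frac{1-e^{-(\beta/2+x)}}{1-e^{-(\beta/2-x)}},
\end{equation*}
and combining with Step~1 yields $\tilde\Pb_x(\tau<\infty)=(1-e^{-(\beta/2-x)})/(1-e^{-(\beta/2+x)})$. Multiplying numerator and denominator of $1-\tilde\Pb_x(\tau<\infty)$ by $e^{\beta/2+x}$ gives \eqref{explicitkappa}, and continuity of $\kappa$ on $(0,\beta/2)$ is an immediate consequence.

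\textit{Step 3: uniform convergence.} Since $\{X_{[1,k]}>0\}=\{\tau>k\}$ decreases to $\{\tau=\infty\}$, one has $\tilde\Pb_x(X_{[1,k]}>0)-\kappa(x)=\tilde\Pb_x(k<\tau<\infty)\leq\sum_{j>k}\tilde\Pb_x(X_j\leq 0)$. For $x\in[x_1,x_2]\subset(0,\beta/2)$ the drift $\cL'(x)$ is bounded below by $\delta:=\cL'(x_1)>0$, and a standard Chernov bound on $X_j=Z_1+\cdots+Z_j$ (using that $h\mapsto \cL(x+h)-\cL(x)-h\delta$ is smooth and strictly convex on a neighbourhood of $0$, uniformly in $x\in[x_1,x_2]$) yields some $c=c(x_1,x_2)>0$ with $\tilde\Pb_x(X_j\leq 0)\leq e^{-cj}$ for all $j\geq 1$ and $x\in[x_1,x_2]$. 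Summing gives a geometric tail $\leq (1-e^{-c})^{-1}e^{-c(k+1)}$ that vanishes uniformly in $x$ as $k\to\infty$, which establishes \eqref{convunifvk}. The main technical point of the whole proof is Step~2: the Laplace memorylessness is crucial because it identifies the conditional law of $-X_\tau$ in closed form and \emph{independently} of the overshoot position $X_{\tau-1}$, which is what makes the martingale identity solvable; all remaining steps are routine applications of standard random walk techniques.
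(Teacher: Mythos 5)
Your proof is correct, and the uniform-convergence and continuity parts (Step~3 plus the observation that \eqref{explicitkappa} gives continuity) coincide with the paper's argument: the paper also bounds $\tilde\Pb_x(X_{[1,k]}>0)-\kappa(x)$ by a geometric tail via a Chernov/Markov exponential inequality, uniform over a compact.

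For the explicit formula \eqref{explicitkappa}, however, you take a genuinely different route from the paper, though both exploit the Laplace memorylessness and an optional-stopping argument. You work entirely under the tilted law $\tilde\Pb_x$: you use the $\tilde\Pb_x$-martingale $e^{-2xX_n}$ (which is a martingale because $\cL$ is even), and then identify, from the memoryless lower tail of $\tilde\Pb_x$, that the conditional law of the overshoot $-X_\tau$ given $\{\tau<\infty\}$ is geometric with parameter $1-e^{-(\beta/2+x)}$, \emph{independently} of the pre-jump position. This makes the stopped expectation $\tilde\bE_x[e^{-2xX_\tau}\ind_{\{\tau<\infty\}}]=1$ immediately solvable for $\tilde\Pb_x(\tau<\infty)$. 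The paper instead pulls everything back to the untilted law $\Pbb$ via the Radon--Nikodym identity $1-\kappa(x)=\Ebb[e^{xX_\rho-\cL(x)\rho}]$, then uses the independence of $\rho$ and $X_\rho$ under $\Pbb$ (again a consequence of the memoryless lower tail, but now with parameter $1-e^{-\beta/2}$, not depending on $x$) together with a $\Pbb$-martingale $(e^{-xX_n-\cL(x)n})_n$ to compute $\Ebb[e^{-\cL(x)\rho}]$. Each approach has a small advantage: yours avoids the change of measure and handles the optional-stopping passage to the limit with a clean domination (since $e^{-2xX_n}\leq e^{-2x}<1$ on $\{\tau>n\}$), whereas the paper's use of a single $x$-independent geometric law for $-X_\rho$ under $\Pbb$ is convenient because that fact is needed elsewhere in the paper (in the proof of Lemma~\ref{exprgene}) and can be reused directly.

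One tiny point worth making explicit in your Step~2: the conditional law of $-X_\tau$ is the same geometric not only conditionally on $X_{\tau-1}=m$ but also after further conditioning on the \emph{entire} pre-$\tau$ path lying in $\{X_i>0,\ i<\tau\}$; this follows because, given $\mathcal F_{\tau-1}$, the next increment $Z_\tau$ is independent of the past, so the event $\{X_i>0,\ i<\tau\}$ is $\mathcal F_{\tau-1}$-measurable and does not affect the conditional law of $Z_\tau$. You gesture at this correctly, but stating it explicitly removes any doubt that the decomposition over $(\tau, X_{\tau-1}, \text{path})$ yields the asserted conditional geometric distribution unconditionally on $\{\tau<\infty\}$.
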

\begin{proof}
We pick $x\in [x_1,x_2]$, $k\geq 1$  and we set $\gep:=x_1/2$, then
\begin{equation}
0\leq \tilde\Pb_x(X_{[1,k]}>0)-\kappa(x)\leq \sum_{j=k+1}^\infty \tilde\Pb_x(V_j\leq 0)\leq  \sum_{j=k+1}^\infty e^{-(\cL(x)-\cL(x-\gep))j}
\end{equation}
were we have used a Markov exponential inequality to obtain the last inequality. The fact that $\cL$ is convex and non decreasing on $[0,\beta/2)$ allows us to claim that  
for every $x\in [x_1,x_2]$ we have  $\cL(x)-\cL(x-\gep)\geq \tilde r:= \cL'(x_1-\gep) \gep =\cL'(\tfrac{x_1}{2}) \frac{x_1}{2}>0$ and therefore 
 \begin{equation}\label{unifrp}
0\leq \tilde\Pb_x(X_{[1,k]}>0)-\kappa(x)\leq \sum_{j=k+1}^\infty e^{-\tilde r j}.
\end{equation}
At this stage,  \eqref{convunifvk} is a straightforward consequence of  \eqref{unifrp}. The continuity of 
$x\mapsto \kappa(x)$ on $(0,\beta/2)$ is a consequence of the fact that $x\mapsto \tilde\Pb_x(X_{[1,k]}>0)$
is continuous on $(0,\beta/2)$ for every $k\in \N$ and of \eqref{convunifvk} which guarantees us that 
the latter sequence of functions converges uniformly to $x\mapsto \kappa(x)$ on every compact subset of 
$(0,\beta/2)$.
%
%

Let us now prove \eqref{explicitkappa}. Pick $x\in [0,\beta/2)$ and  define $\rho=\inf\{i\geq1, X_i\leq 0\}$, hence
\[1-\kappa(x)\,=\, \tilde\Pb_x(\rho<\infty)\,=\, \Ebb\big[e^{xX_\rho-\cL(x)\rho}1_{\{\rho<\infty\}}\big]\,=\,\Ebb\big[e^{xX_\rho-\cL(x)\rho}\big]\,,\]
where we used \eqref{def:Pgd} and that $\rho<\infty$ $\Pbb$-a.s.. As claimed in the proof of Lemma~\ref{exprgene}, $\rho$ and $X_\rho$ are independent, and $-X_\rho$ follows a geometric law on $\N\cup\{0\}$ with parameter $1-e^{-\beta/2}$. Moreover, $(e^{-xX_n-\cL(x)n})_{n\geq1}$ is a martingale under $\Pbb$ (recall that $\cL$ is symmetric), and it is uniformly integrable when stopped at time $\rho$; thus a stopping time argument yields that
\[\Ebb\big[e^{-\cL(x)\rho}\big]\,=\, \Ebb\big[e^{-xX_\rho}\big]^{-1}\,,\]
(we do not write the details here). Thereby,
\[1-\kappa(x)\,=\,\tilde\Pb_x(\rho<\infty)\,=\, \Ebb\big[e^{xX_\rho}\big] \Ebb\big[e^{-xX_\rho}\big]^{-1} \,=\, \frac{1-e^{x-\gb/2}}{1-e^{-x-\gb/2}}\,,\]
which finally yields \eqref{explicitkappa}.

\end{proof}
We recall that $h=\tilde h^q/2$. Lemma \ref{diffeotildeG} guarantees us that for every $q\in [q_1,q_2]$
we have $\tilde h^q\in [\tilde h^{q_1},\tilde h^{q_2}]\subset (0,\beta)$. Therefore, by using \eqref{hatanq} combined with \eqref{mnpc} and by applying Lemma \ref{convkappa} with $[x_1,x_2]=\big[\frac{\tilde h^{q_1}}{2}, \frac{\tilde h^{q_2}}{2}\big]$ we complete the proof of Lemma \ref{contromain}.

\section{Proof of Theorem \ref{thm:C}}\label{unibead}
We recall (\ref{def:omegalc}--\ref{defle}) and Remark \ref{beadtrajnco}. For $A\subset \Omega_L$ we will denote by $Z_{L,\beta}(A)$ the partition function restricted to those trajectories in 
$A$, i.e.,
$$Z_{L,\beta}(A)=\sum_{\ell\in A}  \,  e^{H_{L,\beta}(\ell)}. $$

For $L\in \N$, the function $k\mapsto  \PbLb\big( | \Imax(\ell)|\geq L-k)$ is non decreasing. Therefore, proving 
\eqref{eq:C} with $\PbLb\big( | \Imax(\ell)|\geq L-3k)$ will be sufficient.
Pick $\ell \in \Omega_L$, and let $\cJ_{k}:=\max\{j\geq 0\colon \mathfrak{X}_j\leq k\}$.
Note that, if $\ell \in \Omega_L$ has no bead starting in $\{k,\dots,L-k\}$ and if the last bead of $\ell$ starting before $k$ begins with at most $k-1$ horizontal steps, then the longest sequence of 
non-zero vertical stretches of alternating signs in $\ell$ has a total length at least $L-3k$, i.e., 
 \begin{equation}\label{twosubin}
A_{L,k}\cap B_{L,k}\subset \{| \Imax(\ell)|\geq L-3k\}.
\end{equation}
with 
\begin{align}
A_{L,k}&:=\{\ell\in \Omega_L\colon\, \mathfrak{X}\cap\{k,\dots,L-k\}=\emptyset\}\\
\nonumber B_{L,k}&:=\{\ell\in \Omega_L\colon\, \exists j\in \{1,\dots,k\}\colon \ell_{\tau_{\cJ_k}+j}\neq 0\}.
\end{align}
Since $k\mapsto  \PbLb\big( | \Imax(\ell)|\geq L-3k)$ is non decreasing and since 
\eqref{twosubin} yields 
$$\{| \Imax(\ell)|\geq L-3k\}^c\subset (A_{L,k})^{c}\, \cup \big( A_{L,k} \cap (B_{L,k})^c),$$ 
Theorem \ref{thm:C} will be proven once we show that for every $\gep>0$ there exists a $k_\gep\in \N$ and a $L_\gep\in \N$ 
such that on one hand $L_\gep\geq 3k_\gep$ and  on the other hand, for $L\geq L_\gep$
\begin{align}\label{twoineq}
\PbLb\big( (A_{L,k_\gep})^c)\leq \gep\quad \text{and} \quad \PbLb\big( A_{L,k_\gep}\cap (B_{L,k_\gep})^c)\leq \gep.
\end{align}

For simplicity we will write $\alpha=-\tilde \cG(a_\beta)$. A straightforward consequence of Theorem \ref{Th:4.1} and Corollary \ref{prop:4.2} is that there exists $0<C_1<C_2<\infty$ such that
for every $L\in \N$, 
\begin{align}\label{unifbz}
\frac{C_1 }{L^{3/4}}\, e^{\gb L -\alpha \sqrt{L}}&\leq  \hat Z^{\, \circ}_{L,\gb}\leq  Z_{L,\gb}\leq 
\frac{C_2}{L^{3/4}}\, e^{\gb L -\alpha \sqrt{L}}.
\end{align}
Thus for $k,L \in \N$ such that $L\geq 3k$ we can write
\begin{align}\label{uniper}
\nonumber  \PbLb\big((A_{L,k})^c\big)&\leq \sum_{j=k}^{L-k}  \PbLb\big(j\in \mathfrak{X}\big)= \frac{1}{Z_{L,\beta}}\sum_{j=k}^{L-k}  Z_{j,\beta}^{\,c}\  Z_{L-j,\beta}(\ell_1\geq 0)\\
\nonumber &\leq \frac{1}{Z_{L,\beta}}\sum_{j=k}^{L-k}  Z_{j,\beta}\  Z_{L-j,\beta}\\
\nonumber  &\leq \cst \, e^{\alpha \sqrt{L}} \ L^{3/4} \sum_{j=k}^{L-k}\,  \frac{1}{j^{3/4}}\,  e^{-\alpha \sqrt{j}}\ \frac{1}{(L-j)^{3/4}} \, e^{-\alpha \sqrt{L-j}}\\
\nonumber   &\leq \cst \,  \sum_{j=k}^{L-k} \frac{L^{3/4}}{j^{3/4} \, (L-j)^{3/4}}  \ e^{-\alpha (\sqrt{j}+ \sqrt{L-j}-\sqrt{L})}\\
  &\leq \cst \,  \sum_{j=k}^{L/2} \frac{1}{j^{3/4} }  \ e^{-\frac{\alpha \sqrt{j}}{2}}  \leq  \cst \,  \sum_{j=k}^{\infty} \frac{1}{j^{3/4} }  \ e^{-\frac{\alpha \sqrt{j}}{2}},
 \end{align}
 where in the second line we have used \eqref{unifbz} and in the last line we have used the convex inequality: $\sqrt{L}-\sqrt{L-j}\leq \frac{1}{2} \sqrt{j}$ for $0\leq j \leq L/2$. Since the r.h.s. in  \eqref{uniper} does not depend on $L$ and vanishes as 
 $k\to \infty$, the leftmost inequality in \eqref{twoineq} is proven. 
 \medskip
 
Let us now deal with the second inequality in  \eqref{twoineq}. For $L\geq 3k$, we partition the set $A_{L,k}\cap (B_{L,k})^c$ 
by recording $r$ (respectively $s$), the rightmost  (resp. leftmost) point in $\mathfrak{X}$ that is smaller than $k$ (resp. larger than $L-k$). 
Moreover, the fact that $\ell\in  (B_{L,k})^c$ implies that the bead which covers the interval $\{k,\dots,L-k\}$ begins with $k$ zero-length vertical stretches.
Thus,
\begin{align}\label{decplugrexc}
\nonumber Z_{L,\beta}(A_{L,k}\cap (B_{L,k})^c)&=\sum_{r=0}^{k-1} \sum_{s=L-k+1}^L Z_{L,\beta}(\{\mathfrak{X}\cap\{r,\dots,s\}=\{r,s\}\}\cap (B_{L,k})^c)\\
\nonumber  &= \sum_{r=0}^{k-1} \sum_{s=L-k+1}^L Z_{r,\beta}^{\,c}\  \hat Z^{\, \circ}_{s-r,\gb}(\ell_1=\dots=\ell_k=0)\  Z_{L-s,\beta}(\ell_1\geq 0)\\
&\leq 2\,  \sum_{r=0}^{k-1} \sum_{s=L-k+1}^L Z_{r,\beta}^{\,c}\  \hat Z^{\, \circ}_{s-r-k,\gb}\  Z_{L-s,\beta}(\ell_1\geq 0)\\
\nonumber &=2\  Z_{L-k,\beta}(\{\mathfrak{X}\cap\{r,\dots,s-k\}=\{r,s-k\}\})
\end{align}
where the third line in \eqref{decplugrexc} is obtained by observing that 
$\hat Z^{\, \circ}_{j,\gb}(\ell_1=\dots=\ell_k=0)\leq 2  \hat Z^{\, \circ}_{j-k,\gb}$ for $j\geq k+2$. The factor $2$ in the r.h.s. of the latter inequality comes from the fact that there is no constraint on the sign of the $(k+1)$-th stretch of  $\ell \in \hat\Omega_j^{\, \circ}$ satisfying $\ell_1=\dots=\ell_k=0$. 
 The r.h.s. in the fourth line of  \eqref{decplugrexc} is obviously bounded above by $2 Z_{L-k,\beta}$ and therefore,
\begin{align}\label{borner}
\PbLb\big( A_{L,k}\cap (B_{L,k})^c)&=\frac{Z_{L,\beta}(A_{L,k}\cap  (B_{L,k})^c)}{Z_{L,\beta}}\leq  \frac{2\, Z_{L-k,\beta}}{Z_{L,\beta}}.
\end{align}
It remains to use \eqref{unifbz} so that \eqref{borner} becomes 
\begin{align}\label{borner2}
\nonumber \PbLb\big( A_{L,k}\cap (B_{L,k})^c)&\leq \cst \frac{L^{3/4}}{(L-k)^{3/4}} e^{-\beta k}  e^{-\alpha (\sqrt{L-k}-\sqrt{L})} \\
&\leq \cst e^{-\beta k}  e^{-\alpha (\sqrt{L-k}-\sqrt{L})}.
\end{align}
For $\gep >0$ we pick $k_\gep\in \N$ such that $\cst^{-\beta k_\gep}\leq \gep/2$. Moreover, a straightforward computation gives us that 
$\lim_{L\to \infty} \sqrt{L-k_\gep}-\sqrt{L}=0$. This completes the proof of the rightmost inequality in \eqref{twoineq} and 
ends the proof of Theorem \ref{thm:C}.

\appendix

\section{Local limit estimates, proof of Lemma \ref{estimG}}\label{pr:estimG}
We will prove Lemma \ref{estimG} subject to Proposition \ref{llta} and Lemma \ref{lem62} that are stated below and which were proven  in \cite[Section 6]{CNPT18}. To that aim, we recall 
 (\ref{def:cDgb}--\ref{hrt}) and we set 
 \begin{equation}\label{defff}
 {\bf{B}}(\bh)=\text{Hess}\  \cL_{\Lambda}(\bh), \quad \bh\in \cD,
 \end{equation}
 and 
 \begin{equation}\label{defcL}
 f_{\bh}(\bar x) =\frac{(\text{Det}({\bf{B}}(\bh)))^{-1/2}}{2\pi}\, \exp\big(-\tfrac12 \langle {\bf{B}}(\bh)^{-1} \bar x, \bar x\rangle\big), \quad \bar x\in \mathbb{R}^2.
 \end{equation}
  \smallskip
 
\begin{proposition}{\cite[Proposition 6.1]{CNPT18}}\label{llta}
For $[q_1,q_2]\subset (0,\infty)$, we have
$$\sup_{q\in [q_1,q_2]\cap \frac{\N}{n^2}}\ \sup_{y,z\in \mathbb{Z}} \big |n^2\, \bP_{n,\, h_n^q}\big(A_{n-1}=n^2 q+y, X_n=z)-f_{\tilde \bh(q,0)}\big(\tfrac{y}{n^{3/2}},\tfrac{z}{\sqrt{n}}\big)\big|\underset{n\to \infty}{\longrightarrow} 0.$$
\end{proposition}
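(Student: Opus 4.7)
My approach is a Fourier local limit theorem for the vector $(A_{n-1}, X_n)$, which under $\bP_{n,h_n^q}$ decomposes as a sum of \emph{independent but non-identically distributed} integer-valued increments $U_i := X_i - X_{i-1}$ with $U_i \sim \tilde\bP_{\alpha_i}$, $\alpha_i := (h_n^q/2)(1 - (2i-1)/n)$, see \eqref{def:Pnh1}. Using the identity $\tilde\chi_\alpha(t) := \tilde\bE_\alpha[e^{itU}] = e^{\cL(\alpha+it)-\cL(\alpha)}$ and the representations $A_{n-1} = \sum_{i=1}^{n-1}(n-i)U_i$, $X_n = \sum_{i=1}^n U_i$, the joint characteristic function factorises as
\begin{equation*}
\phi_n(t_1,t_2) \,=\, \prod_{i=1}^{n} \exp\!\big(\cL(\alpha_i + i[t_1(n-i)+t_2]) - \cL(\alpha_i)\big).
\end{equation*}
Fourier inversion on $\bbZ^2$, followed by the rescaling $t_1 = s_1/n^{3/2}$, $t_2 = s_2/\sqrt{n}$, expresses
\begin{equation*}
n^2\,\bP_{n,h_n^q}(A_{n-1}=n^2q+y,\, X_n=z) \,=\, \frac{1}{(2\pi)^2} \int_{\cI_n} e^{-i(s_1\tilde y + s_2\tilde z)}\,\Psi_n(s_1,s_2)\,ds_1\,ds_2,
\end{equation*}
where $\tilde y := y/n^{3/2}$, $\tilde z := z/\sqrt{n}$, $\cI_n := [-\pi n^{3/2}, \pi n^{3/2}] \times [-\pi\sqrt{n},\pi\sqrt{n}]$, and $\Psi_n(s_1,s_2) := e^{-is_1 n^{1/2}q}\phi_n(s_1/n^{3/2}, s_2/\sqrt{n})$.

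The pointwise limit of $\Psi_n$ is obtained by Taylor-expanding each factor around $\alpha_i$. The linear term produces $is_1\,n^{-3/2}\sum_i(n-i)\cL'(\alpha_i) + is_2\,n^{-1/2}\sum_i\cL'(\alpha_i)$. By the symmetry $\alpha_{n+1-i} = -\alpha_i$ and oddness of $\cL'$, the second sum vanishes; a reindexing manipulation shows $\sum_i(n-i)\cL'(\alpha_i) = n^2 q$ exactly, thanks to the defining equation $\cG_n'(h_n^q) = q$ (indeed $\cG_n'(h) = \tfrac{1}{2n^2}\sum_i(n-2i+1)\cL'(\alpha_i(h))$, and combined with $\sum_i\cL'(\alpha_i)=0$ this yields $\sum_i(n-i)\cL'(\alpha_i) = n^2 q$). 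Thus the prefactor $e^{-is_1 n^{1/2}q}$ exactly cancels the linear contribution. The quadratic term produces Riemann sums converging, uniformly in $q\in[q_1,q_2]$ thanks to Proposition~\ref{controlhn}, to
\begin{equation*}
-\tfrac12 s_1^2 \!\int_0^1\! v^2 \cL''(\tilde h^q(v-\tfrac12))\,dv - s_1 s_2 \!\int_0^1\! v\,\cL''(\tilde h^q(v-\tfrac12))\,dv - \tfrac12 s_2^2 \!\int_0^1\! \cL''(\tilde h^q(v-\tfrac12))\,dv,
\end{equation*}
which is $-\tfrac12\langle\mathbf{B}(\tilde\bh(q,0))(s_1,s_2),(s_1,s_2)\rangle$ by \eqref{def:cLgL} and \eqref{defff}. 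Higher-order terms are $O(n^{-1/2})$ on any bounded region of $(s_1,s_2)$. Integrating the resulting Gaussian against $e^{-i(s_1\tilde y + s_2\tilde z)}$ then recovers $f_{\tilde\bh(q,0)}(\tilde y,\tilde z)$.

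To upgrade this pointwise convergence to the uniform-in-$(y,z,q)$ statement, two further ingredients are required. First, the Fourier integral outside a bounded ball in $(s_1,s_2)$ must be controlled: using $|\tilde\chi_\alpha(t)| \leq e^{-c t^2}$ for small $t$ and $|\tilde\chi_\alpha(t)| \leq 1-c'$ for $|t|\geq\delta$, uniformly in $\alpha$ in compact subsets of $(-\beta/2,\beta/2)$, one obtains a Gaussian dominant on the "moderate" region (enabling dominated convergence) and exponential smallness on the "far" region; the non-i.i.d. structure forces splitting the product $\prod_i|\tilde\chi_{\alpha_i}|$ according to whether $t_1(n-i)+t_2$ is small or not. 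Second, for large $|\tilde y|+|\tilde z|$, both sides are small: $f_{\tilde\bh(q,0)}$ decays as a Gaussian (uniformly in $q$, since $\mathbf{B}$ is uniformly positive definite on $[q_1,q_2]$), while the left-hand side can be made small by combining the uniform bound $n^2\bP \leq C$ (from the same Fourier integral) with a Chebyshev bound exploiting $\mathrm{Var}_{n,h_n^q}(A_{n-1})=O(n^3)$ and $\mathrm{Var}_{n,h_n^q}(X_n)=O(n)$. I expect the main obstacle to be the bookkeeping required to maintain uniformity in $q\in[q_1,q_2]$ through every Riemann-sum approximation and every Taylor remainder — this relies crucially on the $O(n^{-2})$ comparison between $h_n^q$ and $\tilde h^q$ and between $\cG_n$ and $\cG$ provided by Proposition~\ref{controlhn}, and on the fact that $h_n^q$ stays uniformly bounded away from the singularity $\pm\beta$ of $\cG$.
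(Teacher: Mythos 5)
Note first that this proposition is not proven in the paper you are reading: it is quoted verbatim from \cite[Proposition 6.1]{CNPT18}, and Appendix~\ref{pr:estimG} explicitly refers the reader to \cite[Section 6]{CNPT18} for the proof. There is therefore no in-paper proof against which a literal comparison can be made; what follows assesses your proposal on its own terms.

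Your Fourier local-limit-theorem strategy is the canonical route to a result of this type, and the algebraic skeleton is correct. Under $\bP_{n,h_n^q}$ the increments $U_i$ are independent with $U_i\sim\tilde\Pb_{\alpha_i}$, $\alpha_i=(h_n^q/2)(1-(2i-1)/n)$, the representation $A_{n-1}=\sum_{i=1}^{n-1}(n-i)U_i$ holds, and the two identities you invoke are exact: $\sum_i\cL'(\alpha_i)=0$ (from $\alpha_{n+1-i}=-\alpha_i$ and oddness of $\cL'$) and, writing $n-2i+1=2(n-i)-(n-1)$, one gets $\sum_i(n-i)\cL'(\alpha_i)=n^2\cG_n'(h_n^q)=n^2q$, so the phase $e^{-is_1 n^{1/2}q}$ cancels the linear term with no error. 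The quadratic Riemann sums do converge to the entries of $\Hess\cL_\Lambda(\tilde\bh(q,0))$, uniformly in $q\in[q_1,q_2]$, because $\cL''$ is uniformly continuous on compacts and $h_n^q,\tilde h^q$ stay in a compact subset of $(-\beta,\beta)$ by Remark~\ref{rmsc}, with the $O(n^{-2})$ rate of Proposition~\ref{controlhn} more than sufficient.

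The one step that would need to be repaired in a full write-up is the uniformity in $(y,z)$. Chebyshev bounds on $\Var(A_{n-1})$ and $\Var(X_n)$ control sums of point probabilities over tail regions, not individual point probabilities, so they do not directly give $n^2\bP(A_{n-1}=n^2q+y,X_n=z)\leq\eps$ for $|\tilde y|+|\tilde z|$ large. The cleaner route is to extract the decay directly from the Fourier integral: once the dominated-convergence estimate gives a uniform Gaussian majorant for $|\Psi_n|$ on the moderate-frequency region and exponential smallness outside, one can either (a) integrate by parts twice in each variable to get polynomial decay in $(\tilde y,\tilde z)$ uniformly in $n,q$, or (b) run the Chernov/exponential-tilt argument (which is available here since all exponential moments in a neighbourhood of the tilt exist) to get Gaussian-order decay in $(\tilde y,\tilde z)$. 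Either suffices, since $f_{\tilde\bh(q,0)}$ has uniformly Gaussian tails by Remark~\ref{remhess}. Apart from this, your sketch identifies the right ingredients — including the need to split the product $\prod_i|\tilde\chi_{\alpha_i}(t_1(n-i)+t_2)|$ according to whether the arguments are near $2\pi\bbZ$, which is precisely what the non-i.i.d. structure forces — and is in all likelihood a faithful outline of the argument in \cite{CNPT18}.
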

 \smallskip

\begin{lemma}{\cite[Lemma 6.2]{CNPT18}}\label{lem62}
For $[q_1,q_2]\subset (0,\infty)$, there exist three positive constants $C', C_1, \lambda$ such that for $N$ large enough, the following bound holds true
$$\bE_{N,h_N^q}\big[e^{-\lambda X_j}\big]\leq C'\, e^{-C_1\, j}, \quad \text{for}\quad   j\leq \tfrac{N}{2} \ \text{and} \ \  q\in [q_1,q_2]\cap \tfrac{\N}{N^2}.$$ 
\end{lemma}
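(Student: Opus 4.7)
The strategy is to exploit the independence of increments under $\bP_{N,h_N^q}$ (recall from the discussion following \eqref{def:Pnh1} that $(X_i-X_{i-1})_{i=1}^N$ are independent with the $i$-th increment distributed according to $\tilde\bP_{h_i}$, where $h_i:=\tfrac{h_N^q}{2}(1-\tfrac{2i-1}{N})$) to reduce the bound to an additive estimate on the log-moment generating function of a product, and then to show that sufficient drift accumulates even over the ``hard'' indices $i$ close to $N/2$.

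First I would write, for $\lambda$ small enough that $|h_i-\lambda|<\beta/2$ for all $i$,
\begin{equation*}
\bE_{N,h_N^q}\big[e^{-\lambda X_j}\big]
\,=\,\prod_{i=1}^{j}\bE_{\tilde\bP_{h_i}}\big[e^{-\lambda U_i}\big]
\,=\,\exp\!\Big(\sum_{i=1}^{j}\big[\cL(h_i-\lambda)-\cL(h_i)\big]\Big).
\end{equation*}
A second-order Taylor expansion gives $\cL(h_i-\lambda)-\cL(h_i)=-\lambda\,\cL'(h_i)+\tfrac{\lambda^2}{2}\cL''(\xi_i)$ for some $\xi_i\in(h_i-\lambda,h_i)$. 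By Remark~\ref{rmsc}, one can find $K>0$ such that $h_N^q\in[K,\beta-K]$ uniformly for $q\in[q_1,q_2]\cap\tfrac{\N}{N^2}$ and $N$ large, so if $\lambda$ is chosen in $(0,K/4)$ all the $\xi_i$ lie in a fixed compact subset of $(-\beta/2,\beta/2)$ on which $\cL''$ is uniformly bounded by some constant $M$.

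The core of the argument is then a lower bound on $\Sigma_j:=\sum_{i=1}^j\cL'(h_i)$, and this is where the main obstacle lies: for $i$ close to $N/2$ one has $h_i\downarrow 0$ and $\cL'(h_i)\to 0$, so the simple-minded bound $\cL'(h_i)\geq \cL'(h_N^q/2 \cdot (1-(j-1)/N))$ collapses when $j\approx N/2$. The fix is to discard the hard tail and keep only the first half of the sum: for $i\leq \lfloor j/2\rfloor$ one has $(2i-1)/N\leq (j-1)/N\leq 1/2-1/N$, whence $h_i\geq h_N^q/4\geq K/4$ uniformly. Since $\cL'$ is strictly increasing on $[0,\beta/2)$ with $\cL'(0)=0$, this yields $\Sigma_j\geq \lfloor j/2\rfloor\,\cL'(K/4)\geq c\,j$ for some $c>0$ depending only on $\beta$ and $[q_1,q_2]$, all other terms being non-negative because $h_i>0$ for $i\leq N/2$.

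Combining these two ingredients,
\begin{equation*}
\sum_{i=1}^{j}\big[\cL(h_i-\lambda)-\cL(h_i)\big]\,\leq\, -c\lambda\, j+\tfrac{M\lambda^2}{2}j\,=\,\lambda\bigl(-c+\tfrac{M\lambda}{2}\bigr)j,
\end{equation*}
and shrinking $\lambda$ further so that $M\lambda/2\leq c/2$ gives the exponential bound with $C_1:=c\lambda/2$; the constant $C'$ can be taken equal to $1$ after enlarging it slightly to absorb the finitely many small values of $j$. Uniformity in $q\in[q_1,q_2]\cap\tfrac{\N}{N^2}$ is built in throughout because the constants $K$, $M$ and $c$ depend only on $\beta$ and the compact interval $[q_1,q_2]$ via Remark~\ref{rmsc}.
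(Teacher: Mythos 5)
The paper does not actually prove this lemma; it is imported verbatim as \cite[Lemma 6.2]{CNPT18}, so there is no proof in the present document to compare against. That said, your proposal is a correct, self-contained argument, and the key technical point is exactly where it needs to be.

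The factorization $\bE_{N,h_N^q}[e^{-\lambda X_j}]=\exp\bigl(\sum_{i=1}^{j}[\cL(h_i-\lambda)-\cL(h_i)]\bigr)$ with $h_i=\tfrac{h_N^q}{2}(1-\tfrac{2i-1}{N})$ is the right starting point, and the second-order Taylor bound with $\cL''$ controlled on the compact set $[-K/4,(\beta-K)/2]$ via Remark~\ref{rmsc} is clean. The genuine obstacle you correctly identify is that $\cL'(h_i)\to 0$ as $i\uparrow N/2$, so a naive lower bound on $\cL'(h_j)$ alone would degenerate precisely at the boundary $j\approx N/2$. Your fix -- retain only $i\le\lfloor j/2\rfloor$, for which $h_i\ge h_N^q/4\ge K/4$, and drop the remaining terms since $\cL'(h_i)>0$ whenever $h_i>0$ -- gives $\Sigma_j\ge\lfloor j/2\rfloor\,\cL'(K/4)$, which is linear in $j$ once $j\ge 2$, with $j=1$ trivially absorbed into $C'$. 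Shrinking $\lambda$ so that $M\lambda/2\le c/2$ then yields the exponential decay with $C_1=c\lambda/2$, and all constants depend only on $\beta$, $K$, $[q_1,q_2]$, giving the uniformity required. I have no corrections; this is a complete and correct proof of the cited statement, independently of whatever route \cite{CNPT18} takes.
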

\medskip

We recall \eqref{defG} and we relax the constraint $\{X_{[0,N_2]}>-x_1\}$ in $G_{N,\bar x,\bar a}^{\,q}$ to define $\bar G_{N,\bar x,\bar a}^{\,q}$, i.e.,
\begin{equation}\label{defGbar}
\bar G_{N,\bar x,\bar a}^{\,q}:=\Pb_{N_2, h_{N_2}^{q}}\Big[\Lambda_{N_2}=\big(\tfrac{qN^2-a_1-a_2}{N_2}-x_1\big(1-\tfrac1{N_2}\big),\, x_2-x_1\big)\Big].
\end{equation}
\smallskip

\noindent Remark \ref{rmsc} guarantees us
that there exist a $K>0$ and a $N_0\in \N$ such that  $h_{N}^q\in [K,\beta-K]$ for $N\geq N_0$ and $q\in [q_1,q_2]\cap \frac{\N}{N^2}$.
Therefore, there exists a $c_1>0$ such that for $N$ large enough,  $q\in [q_1,q_2]\cap \frac{\mathbb{N}}{N^2}$ and $x_1\in \cC_N$ we have
\begin{equation}\label{bottle}
x_1 \geq \bE_{N,h_N^q}(X_{a_N})-(a_N)^{3/4} \geq  a_N\,  \tilde\bE_{\frac{K}{2}}(X_1)-(a_N)^{3/4} \geq c_1 a_N\,.
\end{equation}
As a consequence,  we can write 
\begin{equation}\label{encadr}
\bar G_{N,\bar x,\bar a}^{\,q} \geq G_{N,\bar x,\bar a}^{\,q}\geq \bar G_{N,\bar x,\bar a}^{\,q}-\hat G_{N,\bar x}^{\,q}
\end{equation}
with
\begin{align}\label{defhatg}
\hat G_{N,\bar x}^{\,q} &=\sum_{i=1}^{N_2-1} \bP_{N_2,  h_{N_2}^q}\big(X_{N_2}=x_2-x_1, X_i\leq -c_1 a_N\big).
\end{align}

Until the end of the present section, every function $o(1)$ converges to $0$ as $N\to \infty$ uniformly in $q\in  [q_1,q_2]\cap \frac{\mathbb{N}}{N^2}$ and $(\bar x,\bar a)\in \cH_N$ (we will omit to repeat this uniformity to lighten notations).  Thanks to  \eqref{encadr}, the proof of Lemma \ref{estimG} will be complete once we show that 
\begin{equation}\label{convuniftp}
N^2\, \bar G_{N,\bar x,\bar a}^{\,q}=\frac{\big(\vartheta(\tilde h^q)\big)^{-\frac{1}{2}}}{2\pi} +o(1),
\end{equation}
and that 
\begin{equation}\label{convunifts}
N^2\, \hat G_{N,\bar x}^{\,q} =o(1).
\end{equation}
\medskip

Let us start with \eqref{convuniftp} by stating a remark about the Hessian matrices $\bf{B}$ (see \eqref{defff}).

\begin{remark}\label{remhess}
From \eqref{def:cL} we deduce easily that $\cL''(h)>0$ for $h\in (-\beta/2,\beta/2)$.  As a consequence, we can rule out the equality case when applying the Cauchy-Schwartz inequality to \eqref{defvartheta}. Therefore, 
 \begin{equation}\label{detb}
 \vartheta(\tilde h^q)=\text{Det}({\bf{B}}(\tilde \bh(q,0)))>0,\quad q\in \R.
 \end{equation}
Since $\cL_\Lambda$ is convex on $\cD$, \eqref{detb} is sufficient to assert that  ${\bf{B}}(\tilde  \bh(q,0))$ is a symmetric positive-definite matrix for every $q\in \R$. Thus, the eigenvalues of  ${\bf{B}}(\tilde  \bh(q,0))$ are positive and continuous in $q\in \R$.  This yields that, for 
 $[q_1,q_2]\subset (0,\infty)$, the eigenvalues of ${\bf{B}}(\tilde  \bh(q,0))$ are bounded above and below by  positive constants that are uniform in $q\in [q_1,q_2]$.  Therefore, 
 there exists  a compact subset  $\cK\subset(0,\infty)$ such that $\vartheta(\tilde h^q)\in \cK$ for every 
 $q\in [q_1,q_2]$. From \eqref{defcL}, the latter implies that $f_{\tilde \bh(q,0)}$ is bounded from above on $\R^2$ , uniformly in  $q\in [q_1,q_2]$.
\end{remark}

We resume the proof of \eqref{convuniftp}. By definition of $N_2=N-2a_N$ we easily show that 
\begin{equation}\label{premco}
N^2\, \bar G_{N,\bar x,\bar a}^{\,q}=(N_2)^2\, \bar G_{N,\bar x,\bar a}^{\,q}\,  (1+o(1)).
\end{equation}
Thus, we apply Proposition \ref{llta}  with $n=N_2$,  $y=q(N^2-(N_2)^2)-a_1-a_2-x_1 \frac{N_2-1}{N_2}$ and $z=x_2-x_1$, and we obtain that 
\begin{equation}\label{secco}
(N_2)^2\, \bar G_{N,\bar x,\bar a}^{\,q}= f_{\tilde \bh(q,0)}\Big(\frac{y}{(N_2)^{3/2}}, \frac{z}{\sqrt{N_2}}\Big)+  o(1).
\end{equation}
In Section \ref{Step4},  to derive \eqref{bdB2} and \eqref{perturbq}, we have proven that there exists a $C>0$ (uniform in $q\in [q_1,q_2]$) such that $(\bar x,\bar a)\in \cH_N$
implies $|x_1|+|x_2|\leq C a_N$ and $|a_1|+|a_2|\leq C a_N^2$. Moreover, 
provided $C$ is chosen large enough we also have  $N^2-(N_2)^2\leq C\, N\,a_N$.  Therefore, provided $C$ is chosen large enough,
\begin{equation}\label{borneyz}
\frac{|y|}{(N_2)^{3/2}}\leq C \frac{a_N}{\sqrt{N}}\quad \text{and} \quad \frac{|z|}{\sqrt{N_2}}\leq C \frac{a_N}{\sqrt{N}},\quad \text{for}\    (\bar x,\bar a)\in \cH_N, \ 
q\in [q_1,q_2].
 \end{equation}
By remark \ref{remhess}, we know that the eigenvalues of ${\bf B}(\tilde \bh(q,0))^{-1}$ are bounded above uniformly in $q\in[q_1,q_2]$, so that 
\eqref{borneyz} allows us to write
\begin{equation}\label{trco}
\sup_{q\in [q_1,q_2]}\ \sup_{( \bar x,\bar a)\in \cH_N} \Big|f_{\tilde \bh(q,0)}\big(\tfrac{y}{(N_2)^{3/2}},\tfrac{z}{\sqrt{N_2}}\big)-f_{\tilde \bh(q,0)}(0,0)\Big|\underset{N\to \infty}{\longrightarrow} 0.
\end{equation}
At this stage, we complete the proof of \eqref{convuniftp} by observing that  $f_{\tilde \bh(q,0)}(0,0)=\vartheta(\tilde h^q)^{-1/2}\, \frac{1}{2\pi}$ and by combining \eqref{premco}, \eqref{secco} and \eqref{trco} with the fact that  $f_{\tilde \bh(q,0)}$ is bounded from above on $\R^2$, uniformly in  $q\in [q_1,q_2]$.
\bigskip

It remains to prove \eqref{convunifts}. 
We apply \eqref{timereveq} with $n=N_2$ and $h= h_{N_2}^q$ to claim that 
for $i\in \{1,\dots,N_2-1\}$,
\begin{align}\label{trev}
\bP_{N_2, h_{N_2}^q}(X_{N_2}=x_2-x_1,&\,  X_i\leq -c_1 a_N)\\
\nonumber &=\bP_{N_2,\,  h_{N_2}^q}(X_{N_2}=x_1-x_2, X_{N_2-i}\leq -c_1 a_N+x_1-x_2).
\end{align}
Since $|x_1-x_2| \leq 2 (a_N)^{3/4}$ we have that, for $N$ large enough, $-c_1 a_N+x_1-x_2\leq -c_1 a_N/2$. Therefore, by  using \eqref{trev}
with $i\in \{\frac{N_2}{2},\dots,N_2-1\}$ we obtain that 
\begin{align}\label{defhatg2}
\bP_{N_2,\,  h_{N_2}^q}\big(X_{N_2}=x_2-x_1, X_i\leq -c_1 a_N\big)\leq  \bP_{N_2,\,  h_{N_2}^q}\big(X_{N_2-i}\leq -\tfrac{c_1}{2} a_N\big).
\end{align}
Coming back to \eqref{defhatg} and using \eqref{defhatg2} for $i\geq N_2/2$ we can bound $\hat G_{N,\bar x}^{\,q}$ from above as
\begin{align}\label{boundhatg3}
\nonumber\hat G_{N,\bar x}^{\,q} &\leq\sum_{i=1}^{N_2/2} \bP_{N_2, h_{N_2}^q}\big(X_{N_2}=x_2-x_1, X_i\leq -c_1 a_N\big)+\sum_{N_2/2}^{N_2-1} \bP_{N_2, h_{N_2}^q}\big(X_{N-i}\leq -\tfrac{c_1}{2} a_N\big)\\
&\leq 2 \sum_{i=1}^{N_2/2} \bP_{N_2, h_{N_2}^q}\big( X_i\leq -\tfrac{c_1}{2} a_N\big)\leq 2  \sum_{i=1}^{N_2/2} 
\bE_{N_2, h_{N_2}^q}\big(e^{-\lambda X_i}\big) \ \ e^{-\tfrac{\lambda\, c_1\, a_N}{2} }\\
&\leq 2 \ e^{-\tfrac{\lambda\, c_1\, (\log N)^2}{2} }  \sum_{i=1}^{\infty} 
C'\, e^{-C_1\, j}=o(1) \, \tfrac{1}{N^2},
\end{align}
where we have used a Chernov inequality to obtain the second inequality in the second line, and Lemma~\ref{lem62} for the last line. This completes the proof of Lemma \eqref{estimG}.

\section{Functional estimates, proof of Proposition \ref{controlderiv}}\label{pr:propderiv}

Let us start with the case $j=0$. We recall \eqref{defGn} and we set 
\begin{equation}\label{deffN}
f_{N,h}(x):=\cL(\tfrac{h}{2}(1+\tfrac{1}{N})-h\, \tfrac{x}{N})
\end{equation}
and therefore $N \cG_N(h)=\sum_{i=1}^{N} f_{N,h}(i)$. We apply the Euler-Maclaurin summation formula (see e.g.  \cite[Theorem 0.7]{GT15}
) and since $ f_{N,h}$ is $\cC^2$  we obtain 
that 
\begin{align}\label{appEuMa}
 N\cG_n(h)=&A(N,h)+B(N,h)
\end{align}
with 
\begin{equation}\label{defANh}
A(N,h):=\frac{f_{N,h}(1)+f_{N,h}(N)}{2}+\int_1^N f_{N,h}(x)\,  dx
\end{equation}
and 
\begin{equation}\label{defB}
B(N,h):=\frac{1}{2}\,  \int_1^N f''_{N,h}(x) \, \big(B_2(0)-B_2(x-\lfloor x\rfloor)\big)\, dx.
\end{equation}
where $B_2$ is seconde Bernoulli polynomial.

We start by considering $A(N,h)$. Recalling the definition of $\cL$ in \eqref{def:cL} and the fact that $\Pbb$ is symmetric, we claim that $\cL$ is even and therefore 
\begin{equation}\label{defcA1}
\frac{f_{N,h}(1)+f_{N,h}(N)}{2}=\cL\Big(\frac{h}{2}-\frac{h}{2N}\Big).
\end{equation}
We recall the definition of $\cG$ in \eqref{deftiG} and a straightforward computation gives 
\begin{align}\label{defcA2}
\int_1^N f_{N,h}(x)\,  dx &= N\int_{1/2N}^{1-1/2N} \cL(h(\tfrac12-y)) \,  dy\\
\nonumber &=N \cG(h) -2N \int_0^{1/2N} \cL(h(\tfrac12-z)) \, dz
\end{align}
where we have used the change of variable $y=x/N$ to get the first equality
and the parity of $\cL$ combined with the change of variable $z=1-y$ to obtain the second equality. Obviously, for $N$ large enough and for every  $h\in [-\beta+K,\beta-K]$ we have that both  $\frac{h}{2}$ and  $\frac{h}{2}(1-\frac{1}{N})$ belong to $\mathcal{R}_K:=\big[-\frac{\beta}{2}+\frac{K}{2},\frac{\beta}{2}-\frac{K}{2}\big]$.
Since $\cL$ is $\cC^1$  we  set   $C'_K:=\max\{|\cL'(x)|, x\in \mathcal{R}_K\}$ and for $N$ large enough
\begin{equation}
\Big|\cL\Big(\frac{h}{2}-\frac{h}{2N}\Big)-\cL\Big(\frac{h}{2}\Big)\Big|\leq C'_K \frac{|h|}{2N}\leq  \beta \, C'_K\,\frac{1}{2N}
\end{equation}
and 
\begin{equation}\label{premine}
\Big|2N \int_0^{1/2N} \cL(h(\tfrac12-z))dz -\cL(\tfrac{h}{2})\Big|\leq 2N \int_0^{1/2N}  C'_K \, |h| \, z dz\leq \beta\,  C'_K\,   \frac{1}{4N}.
\end{equation}
Combining (\ref{defcA1}--\ref{premine}) we claim that, for $N$ large enough
\begin{equation}\label{Aenc}
|A(N,h)-N\, \cG(h)|\leq \beta\,  C'_K\,  \tfrac1N, \quad \forall  h\in [-\beta+K,\beta-K].
\end{equation}
It remains to consider $B(N,h)$ (recall \eqref{defB}). To that aim, we compute the second derivative of $f''_{N,h}$ and obtain
\be\label{fderivative}
f''_{N,h}(x)=\tfrac{h^2}{N^2}\, \cL''\big[\tfrac{h}{2}(1+\tfrac1N)-h\tfrac{x}{N}\big], \quad x\in [1,N].
\ee
It turns out that for $x\in [1,N]$ we have 
$|\tfrac{h}{2}(1+\tfrac1N)-h\tfrac{x}{N}|\leq \frac{|h|}{2}$. Thus, $h\in [-\beta+K,\beta-K]$ yields $\tfrac{h}{2}(1+\tfrac1N)-h\tfrac{x}{N}\in \mathcal{R}_K$
and since $\cL$ is $\cC^2$  we  set  $C''_K:=\max\{|\cL''(x)|, x\in \mathcal{R}_K\}$ and we obtain 
$|f''_{N,h}(x)|\leq C''_K \beta^2/N^2$. As a consequence, we can use \eqref{defB} to write 
\begin{align}\label{defB3}
\big|B(N,h)\big|&\leq C''_K \frac{\beta^2}{N}\, \max\{|B_2(u)|, u\in [0,1]\}.
\end{align}
At this stage, it remains to combine \eqref{appEuMa}, \eqref{Aenc} and \eqref{defB3} to complete the proof of Proposition  \ref{controlderiv} in the case $j=0$. 

The proof for the case $j=1$ being very close in spirit to that for $j=0$, we will give less details in this case. 
We can repeat (\ref{deffN}--\ref{defB}) with $N \cG'_N(h)$ instead of $N \cG_N(h)$ and  after redefining  $f_{N,h}$ as
\begin{equation}\label{deffN2}
f_{N,h}(x):=\big(\tfrac12(1+\tfrac{1}{N})-\tfrac{x}{N}\big)\,  \cL'[ \tfrac{h}{2}(1+\tfrac{1}{N})-h\, \tfrac{x}{N} ].
\end{equation}
Using that $\cL'$ is odd, introducing the function  $g_h(u):=(\frac12-u)\cL'[h (\frac12-u)]$ and computing $\cG'$ from \eqref{deftiG},   we obtain that in this case
\begin{align}\label{defA2}
\nonumber A(N,h)&=N \int_{1/2N}^{1-1/2N} (\tfrac12-y)\,  \cL'[h(\tfrac12-y)]\,  dy+\tfrac12\,  (1-\tfrac{1}{N})\,  \cL'[h(\tfrac12-\tfrac{1}{2N})]\\
&=N \cG'(h)+ g(\tfrac{1}{2N})-2N \int_0^{1/2N} g(u) \, du.
\end{align}
Taking the derivative of $g$ we obtain $g'(u)=-\cL'[h (\frac12-u)] - h(\frac12-u)\cL''[h (\frac12-u)]$ so that 
\begin{equation}\label{bsupderg}
\sup_{u\in [0,1/2N]} \sup_{h\in [-\beta+K,\beta-K]} |g'(u)|\,\leq\, C'_K+\gb C''_K
\end{equation}
Therefore,
\begin{align}\label{boundso2te}
\nonumber \Big|g(\tfrac{1}{2N})-2N \int_0^{1/2N} g(u) \, du \Big| \,&\leq\, \big| g(\tfrac{1}{2N})-g(0) \big |+2N  \int_0^{1/2N} |g(u)-g(0)| \, du\\
&\leq\, (C'_K+\gb C''_K)\frac1N.
\end{align}
It remains to consider $B(N,h)$ for which we need to compute $f''_{N,h}$, i.e., for $x\in [1,N]$,
\be\label{compfsec}
f''_{N,h}(x)= \tfrac{2 h}{N^2}\, \cL''\big[\tfrac{h}{2}(1+\tfrac1N)-h\tfrac{x}{N}\big]+  \tfrac{ h^2}{N^2}\,\big( \tfrac{1}{2}(1+\tfrac1N)-\tfrac{x}{N}\big) \cL'''\big[\tfrac{h}{2}(1+\tfrac1N)-h\tfrac{x}{N}\big].
\ee
Thus, after defining  $C'''_K:=\max\{|\cL'''(x)|, x\in \mathcal{R}_K\}$ and by mimicking the former proof we obtain $|f'''_{N,h}(x)|\leq (2\, C''_K\,  \beta+\beta^2\, C'''_K)/N^2$ for $x\in [1,N]$. 
This is sufficient to claim (from \eqref{defB}) that 
\begin{align}\label{defB4}
\big|B(N,h)\big|&\leq (2\, C''_K\,  \beta+\beta^2\, C'''_K)\,  \frac{1}{N}\, \max\{|B_2(u)|, u\in [0,1]\}.
\end{align}
We combine \eqref{defA2}, \eqref{boundso2te} and \eqref{defB4} and it completes the proof of Proposition  \ref{controlderiv} in the case $j=1$.

\bibliographystyle{plain}
\bibliography{biblio.bib}

\begin{thebibliography}{10}

\bibitem{CC13}
Francesco Caravenna and Lo{\"{i}}c Chaumont.
\newblock An invariance principle for random walk bridges conditioned to stay
  positive.
\newblock {\em Electron. J. Probab.}, 18:32 pp., 2013.

\bibitem{CNP16}
Philippe Carmona, Gia~Bao Nguyen, and Nicolas P\'{e}tr\'{e}lis.
\newblock Interacting partially directed self avoiding walk. {F}rom phase
  transition to the geometry of the collapsed phase.
\newblock {\em Ann. Probab.}, 44(5):3234--3290, 2016.

\bibitem{CNPT18}
Philippe Carmona, Gia~Bao Nguyen, Nicolas P{\'{e}}tr{\'{e}}lis, and
  Niccol{\`{o}} Torri.
\newblock Interacting partially directed self-avoiding walk: a probabilistic
  perspective.
\newblock {\em Journal of Physics A: Mathematical and Theoretical},
  51(15):153001, mar 2018.

\bibitem{CarPet16B}
Philippe Carmona and Nicolas P\'etr\'elis.
\newblock Interacting partially directed self avoiding walk: scaling limits.
\newblock {\em Electron. J. Probab.}, 21:52 pp., 2016.

\bibitem{DH96}
R.~Dobrushin and O.~Hryniv.
\newblock Fluctuations of shapes of large areas under paths of random walks.
\newblock {\em Probab. Theory Related Fields}, 105(4):423--458, 1996.

\bibitem{FKZ11}
Serguei~S.G. Foss, Sergey, Dmitry Korshunov, and Stan Zachary.
\newblock {\em An introduction to heavy-tailed and subexponential
  distributions. 1st ed}, volume~38.
\newblock 01 2011.

\bibitem{Gut15}
Anthony~J Guttmann.
\newblock Analysis of series expansions for non-algebraic singularities.
\newblock {\em Journal of Physics A: Mathematical and Theoretical},
  48(4):045209, jan 2015.

\bibitem{NGP13}
Gia~Bao Nguyen and Nicolas Petrelis.
\newblock A variational formula for the free energy of the partially directed
  polymer collapse.
\newblock {\em Journal of Statistical Physics}, 151, 11 2012.

\bibitem{OPB93}
A.~L. Owczarek, T.~Prellberg, and R.~Brak.
\newblock New scaling form for the collapsed polymer phase.
\newblock {\em Phys. Rev. Lett.}, 70:951--953, Feb 1993.

\bibitem{PerfWach19}
Elena Perfilev and Vitali Wachtel.
\newblock Local tail asymptotics for the joint distribution of length and of
  maximum of a random walk excursion, 2019.

\bibitem{GT15}
G\'erald Tenenbaum.
\newblock {\em Introduction to Analytic and Probabilistic Number Theory}.
\newblock Amer. Math. Soc., third edition, 2015.

\bibitem{ZL68}
R.~Zwanzig and J.I. Lauritzen.
\newblock Exact calculation of the partition function for a model of two
  dimensional polymer crystallization by chain folding.
\newblock {\em J. Chem. Phys.}, 48(8):3351, 1968.

\end{thebibliography}

\end{document}